\newtheorem{theorem}{Theorem}[section]
\newtheorem{lemma}[theorem]{Lemma}
\newtheorem{proposition}[theorem]{Proposition}
\newtheorem{corollary}[theorem]{Corollary}
\theoremstyle{definition}
\newtheorem{definition}[theorem]{Definition}
\newtheorem{question}[theorem]{Question}
\newtheorem{remark}[theorem]{Remark}
\begin{document}
\sloppy

\title[NonLERFness of mixed $3$-manifold groups]{NonLERFness of arithmetic hyperbolic manifold groups and mixed $3$-manifold groups}

\author{Hongbin Sun}
\address{Department of Mathematics, UC Berkeley, CA 94720, USA}
\curraddr{Department of Mathematics, Rutgers University - New Brunswick, Piscataway, NJ 08854, USA}
\email{hongbin.sun@rutgers.edu}

\thanks{}


\subjclass[2010]{57M05, 57M50, 20E26, 22E40}
\thanks{The author is partially supported by NSF Grant No. DMS-1510383.}
\keywords{locally extended residually finite, graph of groups, hyperbolic $3$-manifolds, arithmetic hyperbolic manifolds}

\date{\today}
\begin{abstract}
We will show that, for any noncompact arithmetic hyperbolic $m$-manifold with $m> 3$, and any compact arithmetic hyperbolic $m$-manifold with $m> 4$ that is not a $7$-dimensional arithmetic hyperbolic manifold defined by octonions, its fundamental group is not LERF. The main ingredient in the proof is a study on abelian amalgamations of hyperbolic $3$-manifold groups. We will also show that a compact orientable irreducible $3$-manifold with empty or tori boundary supports a geometric structure if and only if its fundamental group is LERF.
\end{abstract}

\maketitle
\vspace{-.5cm}
\section{Introduction}

For a group $G$ and a subgroup $H < G$, we say that $H$ is {\it separable} in $G$ if for any $g\in G\setminus H$, there exists a finite index subgroup $G'<G$ such that $H<G'$ and $g\notin G'$. $G$ is called {\it LERF} (locally extended residually finite) or {\it subgroup separable} if all finitely generated subgroups of $G$ are separable.

The LERFness of a group is a property closely related with low dimensional topology, especially the virtual Haken conjecture (settled in \cite{Ag3}). In this paper, we are mostly interested in fundamental groups of some nice manifolds, and graph of groups constructed from these groups.

Among fundamental groups of low dimensional manifolds, the following groups were known to be LERF: free groups (\cite{Ha}), surface groups (\cite{Sc}), Seifert manifold groups (\cite{Sc}), hyperbolic $3$-manifolds groups (\cite{Ag3} and \cite{Wi}); while the following groups are known to be nonLERF: the groups of nontrivial graph manifolds (\cite{NW2}), the groups of fibered $3$-manifolds whose monodromy is reducible and satisfies some further condition (\cite{Li1}).

In this paper, we give a few more examples of nonLERF groups arised from topology. These results imply that $3$-manifolds with LERF fundamental groups support geometric structures, and hyperbolic manifolds with LERF fundamental groups seem to have dimension at most $3$.

One main result of this paper is about high dimensional arithmetic hyperbolic manifolds (with dimension $\geq 4$). Comparing to the $3$-dimensional case, there are much fewer examples of hyperbolic manifolds with dimension at least $4$. Most examples of high dimensional hyperbolic manifolds are constructed by arithmetic methods, and some other examples are constructed by doing cut-and-paste surgery on these arithmetic examples. So the following results suggest that having nonLERF fundamental group is a general phenomenon in high dimensional hyperbolic world.

\begin{theorem}\label{arithmetic}
Let $M^m$ be an arithmetic hyperbolic manifold with $m\geq 5$ which is not a $7$-dimensional arithmetic hyperbolic manifold defined by octonions, then its fundamental group is not LERF.

Moreover, if $M$ is closed, there exists a nonseparable subgroup isomorphic to a free product of closed surface groups and free groups. If $M$ is not closed, there exists a nonseparable subgroup that is isomorphic to either a free subgroup, or a free product of surface groups and free groups.
\end{theorem}

Comparing with Theorem \ref{arithmetic}, it is shown in \cite{BHW} that all geometrically finite subgroups of standard arithmetic hyperbolic manifold groups are separable. It will be easy to see that nonseparable subgroups constructed in the proof of Theorem \ref{arithmetic} are not geometrically finite (Remark \ref{not_geom_finite}).

Theorem \ref{arithmetic} does not cover the case of arithmetic hyperbolic $4$-manifolds. By using a slightly different method, we show that noncompact arithmetic hyperbolic manifolds with dimension at least $4$ have nonLERF fundamental groups. Of course, the only case in Theorem \ref{noncompact} that is not covered by Theorem \ref{arithmetic} is the $4$-dimensional case. Note that in the more recent work \cite{Sun}, it is proved that all closed arithmetic hyperbolic $4$-manifolds also have nonLERF fundamental groups. So it is known that, with possible exceptions in $7$-dimensional arithmetic hyperbolic manifold defined by octonions, all arithmetic hyperbolic manifolds with dimension at least $4$ have nonLERF fundamental groups.

\begin{theorem}\label{noncompact}
Let $M^m$ be a noncompact arithmetic hyperbolic $m$-manifold with $m\geq 4$, then $\pi_1(M)$ is not LERF.

Moreover, there exist a nonseparable subgroup isomorphic to a free group and another nonseparable subgroup isomorphic to a surface group.
\end{theorem}

Some examples of high dimensional nonarithmetic hyperbolic manifolds are constructed in \cite{GPS}, \cite{Ag2} and \cite{BT}. These examples are constructed by cutting arithmetic hyperbolic manifolds along codimension-$1$ totally geodesic submanifolds, then pasting along isometric boundary components. Since all these nonarithmetic hyperbolic manifolds contain codimension-$1$ arithmetic hyperbolic submanifolds, Theorem \ref{arithmetic} implies Theorem \ref{cutpaste}, which claims that all nonarithmetic examples in \cite{GPS} and \cite{BT} (\cite{Ag2} only constructed $4$-dimensional examples) with dimension $\geq 6$ have nonLERF fundamental groups.

In Theorem \ref{reflection}, we also show that compact reflection hyperbolic manifolds with dimension $\geq 5$ and noncompact reflection hyperbolic manifolds with dimension $\geq 4$ have nonLERF fundamental groups.

\vspace{3mm}

Another main result in this paper concerns compact orientable irreducible $3$-manifolds with empty or tori boundary. Thurston's Geometrization Conjecture (confirmed by Perelman) implies that any compact orientable irreducible $3$-manifold $M$ with empty or tori boundary has a minimal collection of incompressible tori, such that each component of its complement supports one of eight Thurston's geometries. If this set of incompressible tori is empty, we say that $M$ is a geometric $3$-manifold.

The following theorem implies that a compact orientable irreducible $3$-manifolds with empty or tori boundary is geometric if and only if its fundamental group is LERF. The author thinks this result is very interesting, since it gives a surprising relation between geometric structures on $3$-manifolds and LERFness of $3$-manifold groups, and these two topics in $3$-manifold topology are very popular in the past twenty years. This result also confirms Conjecture 1.5 in \cite{Li1}.

\begin{theorem}\label{dim3}
For an compact orientable irreducible $3$-manifold $M$ with empty or tori boundary, $M$ supports one of eight Thurston's geometries if and only if $\pi_1(M)$ is LERF.

When $\pi_1(M)$ is not LERF, there exists a nonseparable subgroup isomorphic to a free group. If $M$ is a closed mixed $3$-manifold, there also exists a nonseparable subgroup isomorphic to a surface group.
\end{theorem}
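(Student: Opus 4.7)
The plan is to prove the two implications separately. For the ``geometric implies LERF'' direction, I would simply collect the known results: for the six Seifert fibered geometries this is Scott \cite{Sc}; for $\mathbb{H}^3$ it is the combination of Agol \cite{Ag2} and Wise \cite{Wi}; and for Sol the fundamental group is virtually polycyclic, which is LERF by Mal'cev's classical theorem. So this direction reduces to a citation.

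For the nontrivial direction, I would use the contrapositive: assume $M$ is irreducible with empty or tori boundary and is not geometric. Then the JSJ decomposition of $M$ is nontrivial, and $M$ falls into one of two cases: (a) $M$ is a nontrivial graph manifold, or (b) $M$ is a mixed manifold, i.e.\ at least one JSJ piece is hyperbolic. In case (a) the nonLERFness of $\pi_1(M)$ is already the theorem of Niblo--Wise \cite{NW2}, but I would reinspect their construction to produce a concretely finitely generated free nonseparable subgroup: choose two adjacent Seifert pieces glued along a torus $T$, pick elements whose images in $\pi_1(T) \cong \mathbb Z^2$ realize the two (distinct) fiber slopes, and build a free two-generator subgroup by ping-pong along the Bass--Serre tree whose separability would force a finite cover compatible with both fiber directions at $T$, contradicting incompatibility of the gluings.

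In case (b) I would apply the paper's graph-of-groups machinery, designed precisely for graphs of groups whose vertex groups are hyperbolic $3$-manifold groups glued along $\mathbb Z^2$ subgroups. Pick an edge of the JSJ graph with at least one hyperbolic endpoint; inside the corresponding vertex group, the peripheral $\mathbb Z^2$ attached to $T$ sits as a cusp (or boundary) subgroup whose behavior in finite covers is controlled by Agol--Wise. Combining this with the fiber/base structure on the other side, I would construct a free subgroup of $\pi_1(M)$ mimicking the archetypal nonseparability construction (an HNN-like loop using two cosets of $\pi_1(T)$ whose difference is detected only in the full group), and verify non-separability by a covering-space argument. For the closed mixed case, to upgrade from a free subgroup to a closed surface subgroup, I would invoke virtual fibering of the hyperbolic pieces to promote the immersed free-group carrier into an immersed closed surface that crosses the JSJ torus, and show that the same obstruction to separability persists after this surface completion.

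The main obstacle is case (b). Graph manifolds are amenable to direct ping-pong because the vertex groups are Seifert with a unique distinguished slope; by contrast, hyperbolic vertex groups are far more flexible in finite covers, so one must be careful that the flexibility provided by Agol--Wise on each hyperbolic side does not allow one to separate the subgroup one is trying to build. This is precisely where the earlier graph-of-groups analysis is needed: it packages exactly which peripheral data survive in every finite cover, and it is what lets a single construction handle both the free-subgroup statement and, in the closed case, the surface-subgroup strengthening.
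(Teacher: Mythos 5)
The ``geometric $\Rightarrow$ LERF'' direction and the citation of \cite{NW2} for graph manifolds are fine, but the heart of the theorem is the mixed case, and there your proposal contains no actual mechanism. You appeal to ``the paper's graph-of-groups machinery'' for $\mathbb{Z}^2$-amalgamations with a hyperbolic vertex group, but no such prior result exists to cite: in the paper that analysis \emph{is} the proof of this theorem (the $\mathbb{Z}$-amalgamation theorem is proved afterwards by modifying it), so invoking it is circular. What is actually needed, and absent from your sketch, is: (i) a reduction, via the virtual retract property applied to a geometrically finite $\mathbb{Z}^2 * \mathbb{Z}^2$ subgroup of the hyperbolic piece, to a finite semicover $N=N_1\cup_{T\cup T'}N_2$ whose dual graph is a \emph{cycle} with two edge tori and with $H_1(T\cup T')\to H_1(N_1)$ injective --- working at a single JSJ torus, as you propose, does not suffice, because the contradiction comes from degree constraints that must close up around a loop; (ii) a $\pi_1$-injective \emph{properly} immersed surface assembled from fibered surfaces of the pieces, with the engineered asymmetry that one surface $\Sigma_{1,1}$ meets $T$ in $A$ circles and $T'$ in $B$ circles with $A\neq B$ (obtained by perturbing a fibered class by a class vanishing on $T'$ but not on $T$, which is exactly where the homological control from the virtual retract is used); and (iii) the nonseparability argument: Scott's criterion to embed the surface in an intermediate finite cover, a cyclic cover dual to the surface, and a count of the covering degrees of the edge tori computed from the $N_1$ side and from the $N_2$ side, forcing $A=B$, a contradiction. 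Saying one will build ``an HNN-like loop using two cosets of $\pi_1(T)$'' and ``verify non-separability by a covering-space argument'' does not identify why separability fails; note that many (e.g.\ geometrically finite) free subgroups of mixed $3$-manifold groups \emph{are} separable, so the entire content of the theorem is the specific incompatibility just described.

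Two further gaps: for the closed mixed case, ``invoke virtual fibering to promote the carrier into an immersed closed surface'' is not a construction --- one needs the essential surfaces of Theorem 4.11 of \cite{DLW}, whose two boundary components wrap a positive and a negative multiple of each boundary circle $s_i$ on the far side of the torus, together with a doubled copy of $\Sigma$ with reversed orientation, after which the degree-counting argument must be re-run for the closed surface. And in the graph-manifold case your ping-pong sketch is muddled: elements realizing the two fiber slopes lie in $\pi_1(T)\cong\mathbb{Z}^2$, commute, and fix an edge of the Bass--Serre tree, so they generate no free group; producing a nonseparable \emph{free} subgroup there again requires perturbing the fibered structures of the two Seifert pieces and repeating the surface construction, not generic ping-pong.
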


The proof of Theorem \ref{dim3} is enlightened by the construction in Section 8 of \cite{Li1}. To prove this theorem, the main case we need to deal with is that $M$ is a union of two geometric $3$-manifolds along one torus, with one of them being hyperbolic.

From group theory point of view, the above group is a $\mathbb{Z}^2$-amalgamation of two LERF groups. An even simpler case is: a $\mathbb{Z}$-amalgamation of two hyperbolic $3$-manifold groups, i.e. the fundamental group of a union of two hyperbolic $3$-manifolds along one essential circle.

There have been a lot of works that study LERFness of $\mathbb{Z}$-amalgamated groups $A*_\mathbb{Z} B$, with both $A$ and $B$ being LERF. For example, the first nonLERF example of $A*_{\mathbb{Z}}B$ was constructed in \cite{Ri}. It has been shown that if both $A$ and $B$ are free groups (\cite{BBS}), or if $A$ is free, $B$ is LERF and $\mathbb{Z}<A$ is a maximal cyclic subgroup (\cite{Gi}), of if both $A$ and $B$ are surface groups (\cite{Ni1}), then $A*_\mathbb{Z}B$ is LERF.

Here we give a family of nonLERF $\mathbb{Z}$-amalgamations of $3$-manifold groups.

\begin{theorem}\label{singular}
Let $M_1, M_2$ be two finite volume hyperbolic $3$-manifolds, and $i_k:S^1\to M_k,\ k=1,2$ be two $\pi_1$-injective embedded circles, then the fundamental group of $$X=M_1\cup_{S^1}M_2$$ is not LERF.

Moreover, if both $M_1$ and $M_2$ have cusps, there exists a nonseparable subgroup isomorphic to a free group. If at least one of $M_k$ is closed, there exists a nonseparable subgroup isomorphic to a free product of surface groups and free groups.
\end{theorem}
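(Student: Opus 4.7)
The plan is to construct a finitely generated subgroup $H<\pi_1(X)$ that fails to be separable. The strategy is to produce a two-dimensional subcomplex of $X$ made of essential surfaces and/or graphs in $M_1$ and $M_2$ glued along the common circle $c$, and to exploit the malnormality of $\langle c_k\rangle$ in $\pi_1(M_k)$ to trap an element in the profinite closure of $\pi_1$ of this subcomplex. The construction parallels (and in fact simplifies) the graph-of-groups argument used to prove Theorem \ref{dim3}, and follows the philosophy of Section 8 of \cite{Li1}.

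\textbf{Step 1.} By the LERFness of $\pi_1(M_k)$ (Agol--Wise) and the existence of essential immersed surfaces through any prescribed geodesic (Kahn--Markovic in the closed case, Cooper--Long--Reid and Masters--Zhang in the cusped case), after passing to a finite cover $\tilde M_k\to M_k$ I find an essential subcomplex $Y_k\subset\tilde M_k$ whose fundamental group contains the lift $\tilde c_k$ of $c_k$ as an embedded essential loop: $Y_k$ is taken to be a closed quasi-Fuchsian surface $\Sigma_k$ when $M_k$ is closed, and an essential graph (a bouquet of geodesic circles through $\tilde c_k$) with free $\pi_1$ when $M_k$ is not closed. The covers $\tilde M_k$ assemble into a finite cover $\tilde X=\tilde M_1\cup_{S^1}\tilde M_2$ of $X$, in which the subcomplex $Y=Y_1\cup_{S^1}Y_2$ has fundamental group
\[
H \;=\; \pi_1(Y_1)*_{\langle c\rangle}\pi_1(Y_2),
\]
a finitely generated subgroup of $\pi_1(\tilde X)\le\pi_1(X)$ realizing the forms asserted in the ``moreover'' statement: a free group when both $M_k$ are non-closed, and a free product of closed surface groups and free groups when at least one is closed.

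\textbf{Step 2.} I show that $H$ is not separable in $\pi_1(X)$. A regular neighborhood of $c$ in $X$ is the wedge of two solid tori along their core circles, so $Y$ has four local sheets meeting along $c$: two from each $Y_k$. Using the malnormality of $\langle c_k\rangle$ in $\pi_1(\tilde M_k)$ when $c_k$ is loxodromic (the peripheral case reduces to this after a further cover that unwinds the cusp cross-section sufficiently), I choose elements $u\in\pi_1(\tilde M_1)$ and $v\in\pi_1(\tilde M_2)$ that conjugate one local sheet of $Y_k$ at $c$ to the other, and set $g=uvu^{-1}v^{-1}$. Then $g\notin H$ by Bass--Serre normal form for the amalgamation, since a reduced expression for $g$ in $\pi_1(M_1)*_{\langle c\rangle}\pi_1(M_2)$ does not lie in the subgroup generated by $\pi_1(Y_1)$ and $\pi_1(Y_2)$. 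In any finite quotient $\pi_1(X)\twoheadrightarrow Q$, however, the image of $c$ has finite order $n$, and this collapse of $\langle c\rangle$ identifies the four sheets of $Y$ at $c$ in a pattern that puts the image of $g$ inside the image of $H$. Hence $g$ lies in every finite-index subgroup of $\pi_1(X)$ containing $H$, proving non-separability.

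\textbf{Main obstacle.} The main difficulty lies in Step 2: pinpointing the trapped element $g$ and verifying the finite-quotient trapping. Producing $Y_k$ in Step 1 is routine given the cited machinery, but the obstruction in Step 2 is subtler than in the graph-manifold case, because the interface is a single circle rather than a torus, so there is no ``slope mismatch'' to exploit. Instead the obstruction must be extracted from the genuinely singular local topology at $c$ (a wedge of solid tori along their cores) together with the malnormality of the edge subgroup. Finally, the different cases of $M_k$ closed versus cusped, and $c_k$ loxodromic versus peripheral, require technically parallel but separate treatments, and the form of the nonseparable subgroup in the ``moreover'' clause is read off from the choice of $Y_k$ dictated by the topology of $M_k$.
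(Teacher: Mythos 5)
Your proposal diverges substantially from the paper's proof and contains a serious gap in Step~2 that I do not believe can be repaired along the lines you sketch.

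The paper's argument does \emph{not} work with a single singular circle. Its first move (Lemmas \ref{singular2V1E} and \ref{singular2V2E}) is to pass to a singular finite semicover $Y = N_1 \cup_{c\cup c'} N_2$ with \emph{two} singular circles, so that the ``dual graph'' of $Y$ has a cycle. This is exactly the analogue of the ``slope mismatch'' you correctly say is missing from the single-circle picture: with two circles and two fibered surfaces $\Sigma_{1,1},\Sigma_{1,2}\subset N_1$ chosen so that $[\Sigma_{1,1}]\cap[c] = A$ and $[\Sigma_{1,1}]\cap[c'] = B$ with $A\neq B$, the paper assembles a $\pi_1$-injective singular surface $K$ (Proposition \ref{singularsurface}) and then, under the hypothesis of separability, takes a finite cover where $K$ embeds and a further cyclic cover dual to the class of $K$. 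Counting covering degrees over $c$ and $c'$ in two ways forces $A = B$, a contradiction (Proposition \ref{singularnonseparable}). The nonseparable subgroup is $\pi_1(K)$, which is a free product (the surface pieces are joined along isolated points), not the $\mathbb{Z}$-amalgam $\pi_1(Y_1)*_{\langle c\rangle}\pi_1(Y_2)$ you write down; this is why the ``moreover'' clause produces free products, not amalgams.

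Your Step~2, by contrast, asserts a ``finite quotient trapping'' of $g = uvu^{-1}v^{-1}$: that in every finite quotient the image of $c$ has finite order, the sheets of $Y$ collapse, and hence the image of $g$ lands in the image of $H$. This conclusion does not follow. Finiteness of the order of $c$ tells you nothing about identification of conjugates of $H$ in the quotient, and malnormality of $\langle c_k\rangle$ is an obstruction to such identifications, not a mechanism for them. You would need to exhibit, for \emph{every} finite-index subgroup $G' < \pi_1(X)$ with $H < G'$, that $g\in G'$, and the sketch gives no such argument. (Also, taking $Y_k$ to be a quasi-Fuchsian surface rather than a fibered surface works against you: quasiconvex subgroups of $\pi_1(M_k)$ are separable, and the paper's cyclic-covering step depends crucially on the surface pieces being \emph{fibered}, i.e.\ geometrically infinite, so that a cyclic cover dual to them exists and its degree can be computed on each side of each singular circle.) In short, Step~1 is salvageable in outline (replace quasi-Fuchsian surfaces by fibered surfaces, and produce two singular circles rather than one via the virtual retract property), but Step~2 needs to be replaced wholesale by the intersection-number-mismatch plus cyclic-cover argument.
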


Theorem \ref{singular} is the main ingredient to prove Theorem \ref{arithmetic}. We will use the fact that arithmetic hyperbolic manifolds have a lot of totally geodesic submanifolds of smaller dimension. If an arithmetic hyperbolic manifold has dimension at least $5$, there are two totally geodesic $3$-dimensional submanifolds intersecting along a closed geodesic, which gives a picture addressed in Theorem \ref{singular}.

In dimension $4$, such a picture does not show up because of dimension reason, so Theorem \ref{singular} does not help here. However, Theorem \ref{dim3} implies that the double of any cusped hyperbolic $3$-manifold has nonLERF fundamental group, and groups of all noncompact arithmetic hyperbolic manifolds with dimension $\geq 4$ contain such doubled manifold groups (by \cite{LR}). So Theorem \ref{noncompact} is a consequence of Theorem \ref{dim3}.

The organization of this paper is as the following. In Section \ref{preliminary}, we review some background on group theory, $3$-manifold topology and arithmetic hyperbolic manifolds. In Section \ref{sectiondim3}, we prove Theorem \ref{dim3}, which is enlightened by the construction in \cite{Li1}. In Section \ref{sectionsingular}, we prove Theorem \ref{singular}, whose proof is similar to the proof of Theorem \ref{dim3}, with some modifications. In Section \ref{sectionarithmetic}, we deduce Theorem \ref{arithmetic} and Theorem \ref{noncompact} from Theorem \ref{singular} and Theorem \ref{dim3} respectively. In Section \ref{further}, we ask some questions related to the results in this paper.

\subsection*{Acknowledgement}
The author is grateful to Ian Agol for many valuable conversations, and these conversations are very helpful on various aspects of the development of this paper. The author thanks Yi Liu for communication and explanation on his results in \cite{Li1}, and thanks Alan Reid for his help on arithmetic hyperbolic manifolds. Part of the work in this paper was done during the author's visiting at the Institute for Advanced Study, and the author thanks for the hospitality of IAS. The author also thanks the anonymous referee for many very helpful comments.

\section{Preliminaries}\label{preliminary}

In this section, we review some basic concepts in group theory, $3$-manifold topology and arithmetic hyperbolic manifolds.

\subsection{Locally extended residually finite}

In this subsection, we review basic concepts and properties on locally extended residually finite groups.

\begin{definition}
Let $G$ be a group, and $H<G$ be a subgroup, we say that $H$ is {\it separable} in $G$ if for any $g\in G\setminus H$, there exists a finite index subgroup $G'<G$ such that $H<G'$ and $g\notin G'$.
\end{definition}

An equivalent formulation is that $H$ is separable in $G$ if and only if $H$ is a closed subset under the profinite topology of $G$.

\begin{definition}
A group $G$ is {\it LERF} (locally extended residually finite) or {\it subgroup separable} if all finitely generated subgroups of $G$ are separable in $G$.
\end{definition}

A basic property on LERFness is that any subgroup of a LERF group is still LERF. This property is basic and well-known, and the proof is very simple. However, since we will use this property for many times in this paper, we give a proof here.

\begin{lemma}\label{subgroup}
Let $G$ be a group and $\Gamma<G$ be a subgroup. For a further subgroup $H<\Gamma$, if $H$ is separable in $G$, then $H$ is separable in $\Gamma$.

In particular, if $\Gamma$ is not LERF, then $G$ is not LERF.
\end{lemma}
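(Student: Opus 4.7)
The plan is to prove the first assertion directly from the definition, and then deduce the contrapositive statement about LERFness as an immediate corollary.

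First I would fix an arbitrary element $g \in \Gamma \setminus H$ and use separability of $H$ in $G$ (applied to the same element $g$, viewed inside $G$) to obtain a finite index subgroup $G' < G$ with $H \subset G'$ and $g \notin G'$. The natural candidate to separate $g$ from $H$ inside $\Gamma$ is then $\Gamma' = \Gamma \cap G'$. I would verify three things: $\Gamma'$ has finite index in $\Gamma$ (standard, since the inclusion $\Gamma / (\Gamma \cap G') \hookrightarrow G / G'$ is injective on cosets), $H \subset \Gamma'$ (because $H < \Gamma$ and $H \subset G'$), and $g \notin \Gamma'$ (because $g \notin G'$). This gives separability of $H$ in $\Gamma$.

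For the second assertion, I would argue by contrapositive. Assume $\Gamma$ is not LERF, so some finitely generated subgroup $H < \Gamma$ fails to be separable in $\Gamma$. Since $H$ is also a finitely generated subgroup of $G$, if $G$ were LERF, then $H$ would be separable in $G$; the first part of the lemma would then force $H$ to be separable in $\Gamma$, a contradiction. Hence $G$ is not LERF either.

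There is no real obstacle in this proof — the content is essentially just the observation that the profinite topology on $\Gamma$ is finer than the subspace topology induced from the profinite topology on $G$, so any set closed in the latter is closed in the former. The only thing to be careful about is to use the same element $g$ when invoking separability, and to note explicitly that finitely generated subgroups of $\Gamma$ remain finitely generated as subgroups of $G$, which is what lets the LERF hypothesis on $G$ transfer down.
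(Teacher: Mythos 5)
Your proof is correct and follows essentially the same argument as the paper: intersect the separating finite index subgroup $G'$ with $\Gamma$ to get $\Gamma' = G' \cap \Gamma$, and then deduce the LERF statement by contrapositive using a finitely generated non-separable subgroup of $\Gamma$. Nothing further is needed.
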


\begin{proof}
We take an arbitrary element $\gamma\in \Gamma\setminus H$, then $\gamma\in G\setminus H$ holds. Since $H$ is separable in $G$, there exists a finite index subgroup $G'<G$ such that $H<G'$ and $\gamma\notin G'$. Then $\Gamma'=G'\cap \Gamma$ is a finite index subgroup of $\Gamma$, with $H<G'\cap \Gamma=\Gamma'$ and $\gamma \notin G'\cap \Gamma=\Gamma'$. So $H$ is also separable in $\Gamma$.

If $\Gamma$ is not LERF, it contains a finitely generated subgroup $H$ which is not separable in $\Gamma$. Then the previous paragraph implies that $H$ is not separable in $G$. So $G$ is not LERF.
\end{proof}

In this paper, the main method to prove a group $G$ is not LERF is to find a descending tower of subgroups of $G$, until we get a subgroup which has a nice structure such that a topological argument can be applied to prove its nonLERFness.

\subsection{Geometric decomposition of irreducible $3$-manifolds}

In this paper, we assume all manifolds are connected and oriented, and all $3$-manifolds are compact and have empty or tori boundary. For any noncompact finite volume hyperbolic manifold $M$, we truncate $M$ by deleting a horocusp for each cusp end of $M$. Then we can consider $M$ as a compact $3$-manifold with tori boundary, and the boundary has an induced Euclidean structure.

Let $M$ be an irreducible 3-manifold with empty or tori boundary. By the geometrization of $3$-manifolds, which is achieved by Perelman and Thurston, exactly one of the following hold.
\begin{itemize}
\item $M$ is geometric, i.e. $M$ supports one of the following eight geometries: $\mathbb{E}^3$, $\mathbb{S}^3$, $\mathbb{S}^2\times \mathbb{E}^1$, $\mathbb{H}^2\times\mathbb{E}^1$, ${\rm Nil}$, ${\rm Sol}$, $\widetilde{{\rm PSL}_2(\mathbb{R})}$, $\mathbb{H}^3$.
\item There is a nonempty minimal union $\mathcal{T}_M\subset M$ of disjoint essential tori and Klein bottles, unique up to isotopy, such that each component of $M\setminus \mathcal{T}_M$ is either Seifert fibered or atoroidal. In the Seifert fibered case, the interior supports both the $\mathbb{H}^2\times \mathbb{E}^1$-geometry and the $\widetilde{{\rm PSL}_2(\mathbb{R})}$-geometry; in the atoroidal case, the interior supports the $\mathbb{H}^3$-geometry.
\end{itemize}

If $M$ has nontrivial geometric decomposition as in the second case, we say that $M$ is a {\it non-geometric $3$-manifold}, and call components of $M\setminus \mathcal{T}_M$ {\it Seifert pieces} or {\it hyperbolic pieces}, according to their geometry. If the components of $M\setminus \mathcal{T}_M$ are all Seifert pieces, $M$ is called a {\it graph manifold}. Otherwise, $M$ contains a hyperbolic piece, and it is called a {\it mixed manifold}. Since we only consider virtual properties of $3$-manifolds in this paper, we can pass to a double cover and assume all components of $\mathcal{T}_M$ are tori.

The geometric decomposition is very closely related to, but slightly different from the more traditional JSJ decomposition. Since these two decompositions agree with each other on some finite cover of $M$, and we are studying virtual properties, we will not make much difference between them.

\subsection{Fibered structures of $3$-manifolds}

In the construction of nonseparable subgroups in Theorem \ref{dim3} and Theorem \ref{singular}, all subgroups have graph of group structures, and the vertex groups are fibered surface subgroups in $3$-manifold groups. So we briefly review the theory of Thurston norm and its relation with fibered structures on $3$-manifolds.

If a $3$-manifold $M$ has a surface bundle over circle structure with $b_1(M)>1$, then $M$ has infinitely many different surface bundle structures. (This works for all dimensions.) These fibered structures of the $3$-manifold $M$ are organized by the Thurston norm on $H_2(M,\partial M; \mathbb{R})$ ($\cong H^1(M;\mathbb{R})$ by duality) defined in \cite{Th2}.

For any $\alpha \in H_2(M,\partial M; \mathbb{Z})$, its Thurston norm is defined by: $$\| \alpha \|=\inf{\{|\chi(T_0)|\ |\ (T,\partial T)\subset(M, \partial M)\ \text{represents}\ \alpha\}},$$ where $T_0\subset T$ excludes $S^2$ and $D^2$ components of $T$. In \cite{Th2}, it is shown that the norm can be extended to $H_2(M,\partial M;\mathbb{R})$ homogeneously and continuously, and the Thurston norm unit ball is a polyhedron with faces dual with elements in $H_1(M;\mathbb{Z})/Tor$. For a general $3$-manifold, the Thurston norm is only a semi-norm, but it is a genuine norm for finite volume hyperbolic $3$-manifolds.

For a top dimensional open face $F$ of the Thurston norm unit ball, let $C$ be the open cone over $F$. In \cite{Th2}, Thurston showed that an integer point $\alpha\in H_2(M,\partial M;\mathbb{R})$ corresponds to a surface bundle structure of $M$ if and only if $\alpha$ is contained in an open cone $C$ as above and all integer points in $C$ correspond to surface bundle structures of $M$. In this case, $C$ is called a {\it fibered cone}, and the corresponding face $F$ is called a {\it fibered face}. For any point (possibly not an integer point) in a fibered cone, we call it a {\it fibered class}.

Thurston's theorem implies that the set of fibered classes of $M$ is an open subset of $H_2(M,\partial M;\mathbb{R})$. In particular, for any fibered class $\alpha\in H_2(M,\partial M;\mathbb{R})$ and any $\beta \in H_2(M,\partial M;\mathbb{R})$, there exists $\epsilon >0$, such that $\alpha+c\beta\in H_2(M,\partial M;\mathbb{R})$ is a fibered class for any $c\in(-\epsilon,\epsilon)$.

\subsection{Virtual retractions of hyperbolic $3$-manifold groups}

In the proof of Theorem \ref{dim3} and \ref{singular}, we need to perturb a fibered class $\alpha\in H_2(M,\partial M;\mathbb{R})$ to get a new fibered class with some desired property. To make sure the desired perturbation exists, we need the virtual retract property of subgroups of hyperbolic $3$-manifold groups.

\begin{definition}
For a group $G$ and a subgroup $H<G$, we say that $H$ is a {\it virtual retraction} of $G$ if there exists a finite index subgroup $G'<G$ containing $H$ and a homomorphism $\phi:G'\to H$, such that $H<G'$ and $\phi|_H=id_H$.
\end{definition}

For a finite volume hyperbolic $3$-manifold $M$, the following dichotomy for a finitely generated infinite index subgroup $H<\pi_1(M)$ holds.
\begin{enumerate}
\item $H$ is a geometrically finite subgroup of $\pi_1(M)$, from the Kleinian group point of view. Equivalently, $H$ is (relatively) quasiconvex in the (relative) hyperbolic group $\pi_1(M)$, from the geometric group theory point of view.
\item $H$ is a geometrically infinite subgroup of $\pi_1(M)$. In this case, $H$ is a virtual fibered surface subgroup of $M$.
\end{enumerate}
Here we do not give the definition of geometrically finite and geometrically infinite subgroups. Readers only need to know that if $H$ is not a virtual fibered surface subgroup, then it is a geometrically finite subgroup. An introduction of geometrically finite subgroups can be found in \cite{Bo} and \cite{Ma} Chapter VI. The proof of the above dichotomy relies on the covering theorem (\cite{Th1}, \cite{Ca}) and the Tameness theorem (\cite{Ag1}, \cite{CG}) on open hyperbolic $3$-manifolds.

In \cite{CDW}, it is shown that (relatively) quasiconvex subgroups of virtually compact special (relative) hyperbolic groups are virtual retractions. The celebrated virtual compact special theorem of Wise (\cite{Wi} for cusped case) and Agol (\cite{Ag3} for closed case) implies that groups of finite volume hyperbolic $3$-manifolds are virtually compact special. These two results together give us the following theorem.

\begin{theorem}\label{virtualretract}
Let $M$ be a finite volume hyperbolic $3$-manifold, $H<\pi_1(M)$ be a geometrically finite subgroup (i.e. $H$ is not a virtual fibered surface subgroup), then $H$ is a virtual retraction of $\pi_1(M)$.
\end{theorem}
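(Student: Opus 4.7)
The statement is essentially a combination of two deep external theorems, so my plan is to assemble them rather than prove anything from scratch. Split the argument into the closed and the cusped cases according to the geometry of $M$.

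In the closed case, $\pi_1(M)$ is a word-hyperbolic group (in the sense of Gromov), and any geometrically finite subgroup in the Kleinian sense is quasiconvex in $\pi_1(M)$ (this is a standard translation between the two viewpoints). By the virtual compact special theorem of Agol \cite{Ag2}, $\pi_1(M)$ is virtually the fundamental group of a compact special cube complex. The result of Haglund--Wise cited in \cite{CDW} then yields that every quasiconvex subgroup of a virtually compact special hyperbolic group is a virtual retract, which is exactly what is needed.

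In the cusped case, I would instead view $\pi_1(M)$ as a relatively hyperbolic group, relatively hyperbolic with respect to the rank-$2$ abelian cusp subgroups. Geometrically finite subgroups of $\pi_1(M)$ are precisely the relatively quasiconvex subgroups, and by the virtual compact special theorem for cusped hyperbolic $3$-manifold groups due to Wise \cite{Wi}, $\pi_1(M)$ is virtually compact special. The version of the retract theorem in \cite{CDW} for virtually compact special relatively hyperbolic groups then guarantees that each relatively quasiconvex subgroup is a virtual retract. Combining these two cases gives the theorem.

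The main thing to be careful about is the packaging in the cusped case: one must check that the hypotheses of \cite{CDW} (the appropriate virtual compact special hypothesis and the correct notion of peripheral structure) are genuinely met by $\pi_1(M)$ for $M$ cusped, and that ``geometrically finite'' in the Kleinian sense really does coincide with ``relatively quasiconvex'' for this peripheral structure; both are standard but worth an explicit sentence. Beyond that, no new argument is required, and the proof reduces to a citation.
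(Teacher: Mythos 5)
Your proposal is correct and follows essentially the same route as the paper: the theorem is obtained by combining the virtual compact special theorems of Agol (closed case) and Wise (cusped case) with the result of \cite{CDW} that (relatively) quasiconvex subgroups of virtually compact special (relatively) hyperbolic groups are virtual retracts, using the standard identification of geometrically finite subgroups with (relatively) quasiconvex ones. No further comment is needed.
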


\subsection{Arithmetic hyperbolic manifolds}

In this subsection, we briefly review the definition of (standard) arithmetic hyperbolic manifolds. Most material can be found in Chapter 6 of \cite{VS}.

Recall that the hyperboloid model of $\mathbb{H}^n$ is given as the following. Equip $\mathbb{R}^{n+1}$ with a bilinear form $B:\mathbb{R}^{n+1}\times \mathbb{R}^{n+1}\to \mathbb{R}$ defined by $$B\big((x_1,\cdots,x_n,x_{n+1}),(y_1,\cdots,y_n,y_{n+1})\big)=x_1y_1+\cdots+x_ny_n-x_{n+1}y_{n+1}.$$ Then the hyperbolic space $\mathbb{H}^n$ is identified with $$I^n=\{\vec{x}=(x_1,\cdots,x_n,x_{n+1})\ |\ B(\vec{x},\vec{x})=-1,x_{n+1}>0\}.$$ The hyperbolic metric is given by the restriction of $B(\cdot,\cdot)$ on the tangent space of $I^n$.

The isometry group of $\mathbb{H}^n$ consists of all linear transformations of $\mathbb{R}^{n+1}$ that preserve $B(\cdot,\cdot)$ and fix $I^n$. Let $J=\text{diag}(1,\cdots,1,-1)$ be the $(n+1)\times (n+1)$ matrix defining the bilinear form $B(\cdot,\cdot)$, then the isometry group of $\mathbb{H}^n$ is given by $$\text{Isom}(\mathbb{H}^n)\cong PO(n,1;\mathbb{R})=\{X\in GL(n+1,\mathbb{R})\ |\ X^tJX=J\}/(X\sim -X).$$ The orientation preserving isometry group of $\mathbb{H}^n$ is given by $$\text{Isom}_+(\mathbb{H}^n)\cong SO_0(n,1;\mathbb{R}),$$ which is the component of $$SO(n,1;\mathbb{R})=\{X\in SL(n+1,\mathbb{R})\ |\ X^tJX=J\}$$ containing the identity matrix.

\vspace{3mm}

Now we give the definition of {\it standard arithmetic hyperbolic manifolds}, and they are also called {\it arithmetic hyperbolic manifolds of simplest type}.

Let $K\subset \mathbb{R}$ be a totally real number field, and $\sigma_1=id,\sigma_2,\cdots,\sigma_k$ be all the embeddings of $K$ into $\mathbb{R}$. Let $$f(x)=\sum_{i,j=1}^{n+1}a_{ij}x_ix_j,\ a_{ij}=a_{ji}\in K$$ be a nondegenerate quadratic form defined over $K$ with negative inertia index $1$ (as a quadratic form over $\mathbb{R}$). We further suppose that for any $l>1$, the quadratic form $$f^{\sigma_l}(x)=\sum_{i,j=1}^{n+1}\sigma_l(a_{ij})x_ix_j$$ is positive definite, then the information of $K$ and $f$ can be used to define an arithmetic hyperbolic group.

Let $\mathcal{O}_K$ be the ring of algebraic integers in $K$, and $A$ be the $(n+1)\times (n+1)$ matrix defining $f$. Since the negative inertia index of $A$ is $1$, the {\it special orthogonal group of $f$}: $$SO(f;\mathbb{R})=\{X\in SL(n+1,\mathbb{R})\ |\ X^tAX=A\}$$ is conjugate to $SO(n,1;\mathbb{R})$ by a matrix $P$ (satisfying $P^tAP=J$). $SO(f;\mathbb{R})$ has two components, and let $SO_0(f;\mathbb{R})$ be the component containing the identity matrix.

Then we form the set of algebraic integer points $$SO(f;\mathcal{O}_K)=\{X\in SL(n+1,\mathcal{O}_K)\ |\ X^tAX=A\}\subset SO(f;\mathbb{R})$$ in $SO(f;\mathbb{R})$. The theory of arithmetic groups implies that $$SO_0(f;\mathcal{O}_K)=SO(f;\mathcal{O}_K)\cap SO_0(f;\mathbb{R})$$ is conjugate to a lattice of $\text{Isom}_+(\mathbb{H}^n)$ (by the matrix $P$), i.e. it has finite covolume. For simplicity, we abuse notation and still use $SO_0(f;\mathcal{O}_K)$ to denote its $P$-conjugation in $SO_0(n,1;\mathbb{R})\cong\text{Isom}_+(\mathbb{H}^n)$.

Here $SO_0(f;\mathcal{O}_K)\subset \text{Isom}_+(\mathbb{H}^n)$ is called {\it the arithmetic group} defined by number field $K$ and quadratic form $f$, and $\mathbb{H}^n/SO_0(f;\mathcal{O}_K)$ is a finite volume hyperbolic arithmetic orbifold. A hyperbolic $n$-manifold (orbifold) $M$ is called {\it a standard arithmetic hyperbolic manifold (orbifold)} if $M$ is commensurable with $\mathbb{H}^n/SO_0(f;\mathcal{O}_K)$ for some $K$ and $f$.

The arithmetic orbifold $\mathbb{H}^n/SO_0(f;\mathcal{O}_K)$ is noncompact if and only if $f(\vec{x})=0$ has a nontrivial solution $\vec{x}\in K^{n+1}$, which happens only if $K=\mathbb{Q}$ (i.e. $\mathcal{O}_K=\mathbb{Z}$). When $n\geq 4$, $\mathbb{H}^n/SO_0(f;\mathcal{O}_K)$ is noncompact if and only if $K=\mathbb{Q}$.

For this paper, the most important property of standard arithmetic hyperbolic manifolds is that they contain a lot of finite volume hyperbolic $3$-manifolds as totally geodesic submanifolds. This can be done by diagonalizing the matrix $A$ and taking an indefinite $4\times 4$ submatrix.

\vspace{3mm}

The above recipe using quadratic forms over number fields gives all even dimensional arithmetic hyperbolic manifolds (orbifolds). In any odd dimension, there is another family of arithmetic hyperbolic manifolds (orbifolds), which are defined by (skew-Hermitian) quadratic forms over quaternion algebras. We do not give the definition of this family here, and the readers can find a detailed definition in \cite{LM}.

This family of arithmetic hyperbolic manifolds defined by quaternions also have a lot of finite volume hyperbolic $3$-manifolds as totally geodesic submanifolds. This can be done by diagonalizing the quadratic form over quaternions and taking a $2\times 2$ submatrix. Note that this fact is also used in \cite{Ka}.

\vspace{3mm}

In dimension $7$, there is the third way to construct arithmetic hyperbolic manifolds by using octonions. These are sporadic examples, and the author does not know whether these manifolds have totally geodesic (or $\pi_1$-injective) $3$-dimensional submanifolds. All examples in this family are compact manifolds.

\section{NonLERFness of non-geometric $3$-manifold groups}\label{sectiondim3}

In this section, we prove that groups of non-geometric $3$-manifolds are not LERF. The construction of nonseparable (surface) subgroups is enlightened by the construction in \cite{Li1} (and also \cite{RW}). The proof of nonseparability is essentially a computation of the spirality character defined in \cite{Li1}. Here we modify the construction in \cite{Li1}, and give an elementary proof of nonseparability without using the spirality character explicitly.

\subsection{Finite semicovers of non-geometric $3$-manifolds}

We first review the notion of finite semicovers of nongeometric $3$-manifolds, which was introduced in \cite{PW2}.
\begin{definition}
Let $M$ be a nongeometric $3$-manifold with tori or empty boundary. A {\it finite semicover} of $M$ is a compact $3$-manifold $N$ and a local embedding $f:N\to M$, such that its restriction on each boundary component of $N$ is a finite cover to a decomposition torus or a boundary component of $M$.
\end{definition}

For a finite semicover $f:N\to M$, the decomposition tori of $N$ is exactly $f^{-1}(\mathcal{T}_M)\setminus \partial N$, and the restriction of $f$ on each geometric piece of $N$ is a finite cover to the corresponding geometric piece of $M$.

One important property of finite semicovers is given by the following lemma in \cite{Li1}.
\begin{lemma}\label{semicover}
If $N$ is a connected finite semicover of a nongeometric $3$-manifold $M$ with empty or tori boundary, then $N$ has an embedded lifting in a finite cover of $M$. In fact, the semi-covering map $N\to M$ is $\pi_1$-injective and $\pi_1(N)$ is separable in $\pi_1(M)$.
\end{lemma}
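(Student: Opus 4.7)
The plan is to split the statement into three tasks that build on each other: $\pi_1$-injectivity of $f_*$, construction of an embedded lift in some finite cover, and separability of $\pi_1(N)$ in $\pi_1(M)$. The semicover $N$ inherits from $M$ a graph-of-groups decomposition: the preimage $f^{-1}(\mathcal{T}_M)$ cuts $N$ into pieces, each of which is a genuine finite cover of the corresponding geometric piece of $M$, and each boundary torus of a piece of $N$ is a finite cover of an edge torus of $M$.

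For $\pi_1$-injectivity, I would run Bass-Serre theory on this decomposition. On each vertex, $\pi_1(N_v)\hookrightarrow \pi_1(M_v)$ is injective because $N_v\to M_v$ is a genuine finite covering; on each edge, the map of torus groups is an injection of a finite-index subgroup. Checking that a reduced word in the graph of groups of $N$ remains reduced after pushing into the graph of groups of $M$ is mechanical (it only uses that the edge maps on either side are inclusions into the vertex groups), and Britton's lemma / the normal form theorem then give injectivity of $f_*:\pi_1(N)\to \pi_1(M)$.

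For the embedded lifting I would proceed piece by piece and then glue. Seifert pieces have LERF fundamental group by Scott, and hyperbolic pieces do by Agol and Wise, so each $\pi_1(N_v)<\pi_1(M_v)$ is separable and $N_v$ embeds in some finite cover $\widetilde M_v\to M_v$. The real work is to choose the $\widetilde M_v$ so that they agree along each JSJ torus: on the shared torus the two covers must induce the same finite cover of $T^2$ that extends the cover already determined by the boundary component of $N$. To arrange this I would invoke the efficiency / bounded-packing results for surface subgroups of mixed and graph $3$-manifold groups (Przytycki–Wise for mixed manifolds, and Liu and Przytycki–Wise for graph manifolds, yielding virtual specialness of the relevant graphs of groups). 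These results guarantee that any finite cover of a JSJ torus extends to a finite cover of each adjacent piece, and that one can match up the covers after passing to a common refinement. Assembling the $\widetilde M_v$ along the matched tori then produces the desired finite cover $\widetilde M\to M$ in which $N$ embeds.

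Finally, separability of $\pi_1(N)$ in $\pi_1(M)$ follows from the embedded lifting: inside the finite-index subgroup $\pi_1(\widetilde M)$, the subgroup $\pi_1(N)$ is the fundamental group of an embedded codimension-zero submanifold bounded by JSJ tori, hence is a retract at the level of graphs of groups (project Bass–Serre-wise onto the sub-graph carrying $N$), and retracts of finite-index subgroups are always separable in the ambient group. The main obstacle is the middle step, the compatible choice of piecewise finite covers along the JSJ tori; that is precisely where one has to appeal to the deep efficient/virtually special structure of non-geometric $3$-manifold groups, whereas $\pi_1$-injectivity and the final separability deduction are formal once the lift exists.
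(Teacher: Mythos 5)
A point of reference first: the paper does not actually prove this lemma; it quotes it from \cite{Li1} (the notion of finite semicover and the lifting/separability statement originate in \cite{PW2}), and the only added content is Remark \ref{withboundary}, which reduces the nonempty-boundary case to the closed case by pasting figure-eight knot complements along the boundary and invoking Lemma \ref{subgroup}. So your proposal is an attempt to reconstruct the cited proof, and as such it has real gaps. The $\pi_1$-injectivity step is essentially right in outline, but your stated justification is too weak: injectivity on vertex and edge groups alone does not make a morphism of graphs of groups $\pi_1$-injective (already a map of graphs with trivial vertex and edge groups can kill free-group elements). What rescues the argument is the extra structure of a semicover: each piece of $N$ is an honest finite cover of a piece of $M$ and the edge spaces of $N$ are the full elevations of the JSJ tori, so any element of $\pi_1(N_v)$ whose image is conjugate into an edge group of $M_v$ is already conjugate into an edge group of $N_v$ (lift the homotopy into the torus); only then does the normal-form argument apply.

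The two genuine gaps are in your middle and final steps. For the gluing step, the claim you lean on --- that any finite cover of a JSJ torus extends to a finite cover of each adjacent piece, so that the piece covers can be matched after a common refinement --- is not what the results you cite provide, and in the exact form you need it is not available: separability (or congruence-type covers for hyperbolic pieces) only produces covers of a piece whose induced boundary covers \emph{factor through} a prescribed cover of the torus, not ones inducing a prescribed cover exactly; bounded packing and surface-separability results are about a different kind of subgroup, and closed graph manifolds need not be virtually special at all. Moreover, ``passing to a common refinement'' threatens to replace $N$ by a further finite cover of $N$, whereas the lemma asserts that $N$ itself embeds; keeping the pieces of $N$ untouched while capping off its free boundary tori with compatible covers of the adjacent pieces and closing the result up into a finite cover of $M$ is precisely the nontrivial matching argument carried out in \cite{PW2} and \cite{Li1}. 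Finally, your last step is incorrect: an embedded codimension-zero submanifold of $\widetilde M$ bounded by JSJ tori does not make $\pi_1(N)$ a retract of $\pi_1(\widetilde M)$ --- there is no ``Bass--Serre projection'' onto a subgraph of groups, and already $A\ast_{\mathbb{Z}^2}B$ does not retract onto $A$ in general (your observation that retracts of residually finite groups are separable is fine, but it has nothing to apply to). Consequently separability is not formal once an embedded lift exists; it is the second, independent assertion of the lemma, and in the literature it is established by its own argument (control of the profinite topology along the JSJ decomposition), not as a corollary of the embedding.
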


\begin{remark}\label{withboundary}
In \cite{Li1}, this lemma is only stated in the case that $M$ is a closed orientable irreducible nongeometric $3$-manifold, but it clearly also holds for irreducible nongeometric $3$-manifolds with nonempty boundary. This is because that we can first take the double $D(M)$ of $M$, apply the closed manifold version of Lemma \ref{semicover} to $N\to D(M)$, then apply Lemma \ref{subgroup} to get separability of $\pi_1(N)$ in $\pi_1(M)$.
\end{remark}

\subsection{Reduction to non-geometric $3$-manifolds with very simple dual graph}

To prove Theorem \ref{dim3}, we reduce to the case that the dual graph of $M$ consists of two vertices and two edges, and $M$ has at least one hyperbolic piece.

Let $M$ be an orientable irreducible nongeometric $3$-manifold with tori or empty boundary. It is known that all graph manifolds have nonLERF fundamental groups (\cite{NW2}), so we can assume that $M$ has at least one hyperbolic piece, i.e. $M$ is a mixed $3$-manifold.

The dual graph of $M$ is a graph with vertices corresponding to geometric pieces of $M$ and edges corresponding to decomposition tori. The following lemma is the first step of our reduction of $3$-manifolds, which reduces the nonLERFness of mixed $3$-manifold groups to a very simple case: the dual graph of $M$ has only two vertices and one edge.

\begin{lemma}\label{2V1E}
Let $M$ be a mixed $3$-manifold, then there exists a $3$-manifold $N=N_1\cup_T N_2$ such that the following hold.
\begin{enumerate}
\item $N_1$ is a cusped hyperbolic $3$-manifold, and $N_2$ is a geometric $3$-manifold.
\item $N_1\cap N_2=T$ is a single torus, and $N=N_1\cup_T N_2$ is a fibered $3$-manifold.
\item $N$ is a finite semicover of $M$, and $\pi_1(N)$ is a subgroup of $\pi_1(M)$.
\end{enumerate}
\end{lemma}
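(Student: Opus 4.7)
My plan is to build $N$ as the union of a fibered cover of a hyperbolic piece of $M$ with a fibered cover of an adjacent geometric piece, glued along a single torus so that the two fibrations match on the gluing torus.

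Because $M$ is mixed, some decomposition torus $T_0\in\mathcal{T}_M$ separates a cusped hyperbolic piece $J_1$ from a second geometric piece $J_2$ (Seifert fibered or hyperbolic; if $T_0$ is non-separating we simply take $J_2$ to be another copy of $J_1$). Both pieces are virtually fibered over $S^1$: by Agol--Wise for hyperbolic pieces, and classically for Seifert pieces with nonempty boundary. Passing to further abelian covers, we may additionally assume $b_1(\tilde J_k)\geq 2$ on the chosen fibered covers $\tilde J_k\to J_k$, so that by Thurston's theorem the fibered classes in $H^1(\tilde J_k;\mathbb{R})$ form open cones and we obtain a rich supply of fibered integral classes $\alpha_k$.

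Fix boundary tori $T_k\subset\partial\tilde J_k$ lying over $T_0$. For the glued manifold to fiber over $S^1$ it suffices to produce a homeomorphism $g\colon T_1\xrightarrow{\cong} T_2$ compatible with the projections to $T_0$ and satisfying $g^{*}(\alpha_2|_{T_2})=\alpha_1|_{T_1}$. Exploiting the openness of each fibered cone together with the density of rational directions within it, I perturb each $\alpha_k$ to bring its boundary restriction to a prescribed rational slope, then clear denominators by passing to further abelian covers of the $\tilde J_k$ so that $T_1$ and $T_2$ become matching covers of $T_0$ and the two restrictions coincide under $g$.

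With these adjustments in place, set $N_1:=\tilde J_1$ and $N_2:=\tilde J_2$ and glue along $T:=T_1\cong T_2$ via $g$ to obtain $N=N_1\cup_T N_2$. By construction, the intersection is a single torus, the matched fibered classes glue to a global fibration $N\to S^1$, and the local embeddings $N_k\hookrightarrow M$ glue to a local embedding $N\to M$ whose free boundary tori cover components of $\mathcal{T}_M\cup\partial M$; thus $N$ is a finite semicover of $M$, and $\pi_1(N)<\pi_1(M)$ follows from Lemma~\ref{semicover} and Remark~\ref{withboundary}. The main obstacle is the boundary-matching in the previous step: one must simultaneously perturb each $\alpha_k$ within its fibered cone, match the covering multiplicities of $T_k$ over $T_0$, and realize the equality $g^{*}(\alpha_2|_{T_2})=\alpha_1|_{T_1}$ by a homeomorphism of tori, all at once. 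The flexibility from $b_1\geq 2$ and Thurston's openness of the fibered cone, combined with number-theoretic control of cover degrees over $T_0$, is what makes this matching feasible.
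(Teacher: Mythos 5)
Your proposal takes a genuinely different, and unfortunately incomplete, route. The paper does not build a fibration piece by piece; it invokes Przytycki--Wise \cite{PW1} to pass to a finite cover of the \emph{whole} mixed manifold $M$ that already fibers over $S^1$, then simply cuts along all but one decomposition torus to isolate a two-piece component $N=N_1\cup_T N_2$. Since any fibration of $M$ is automatically compatible with the geometric decomposition, the fibered structure on $N$ is inherited for free, and there is nothing to match. (The case of a single geometric piece with a nonseparating torus is handled by a double cover first.) Your version instead virtually fibers each geometric piece separately and then tries to synchronize the two fibrations across the gluing torus.

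The step you flag as ``the main obstacle''---simultaneously (a) arranging that $T_1\to T_0$ and $T_2\to T_0$ are the \emph{same} finite cover so that a gluing homeomorphism $g$ exists over $T_0$ (needed for $N\to M$ to be a semicover), and (b) perturbing $\alpha_1,\alpha_2$ inside their respective fibered cones so that $g^*(\alpha_2|_{T_2})=\alpha_1|_{T_1}$---is in fact the entire content of the lemma in your approach, and you do not carry it out. Openness of the fibered cone gives you a nonempty open set of boundary slopes, not \emph{all} slopes, and for a Seifert piece the restriction of the fibered cone to a boundary torus is even more constrained; it is not clear that the two ranges of achievable slopes intersect, let alone intersect compatibly with the covering-degree matching required in (a). Passing to further abelian covers changes both the boundary cover and the cone simultaneously, so there is a nontrivial bookkeeping problem, not just a denominator-clearing one. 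This is precisely the difficulty that citing \cite{PW1} (virtual fibering of the full mixed manifold) is designed to avoid, and without filling it in your argument does not establish condition (2).
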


\begin{proof}
By \cite{PW1}, we take a finite cover of $M$ such that it is a fibered $3$-manifold, and still denote it by $M$.

We first suppose that $M$ has at least two geometric pieces. Take any hyperbolic piece $N_1$, and take another (distinct) geometric piece $N_2$ adjacent to $N_1$. It is possible that $N_1\cap N_2$ consists of more than one tori, so let $T$ be one of them. We cut $M$ along all decomposition tori in $\mathcal{T}_M$ except $T$, then the component containing $N_1$ and $N_2$ is the desired $N$, which is clearly a finite semicover of $M$.

The fibered structure on $M$ induces a fibered structure on $N$, since fibered structures of $3$-manifolds are compatible with geometric decomposition. It is easy to see all other desired conditions hold for $N$.

It remains to consider the case that $M$ has only one geometric piece, and we denote it by $N_1$. Since the geometric decomposition of $M$ is nontrivial, there is a decomposition torus $T$ of $M$ that is adjacent to $N_1$ on both sides. Then we take a double cover of $M$ along $T$, and reduce it to the previous case.
\end{proof}

By Lemma \ref{subgroup}, to prove nonLERFness of mixed $3$-manifold groups, we only need to consider the case $M=M_1\cup_T M_2$ as in Lemma \ref{2V1E} (we use $M$ and $M_i$ instead of $N$ and $N_i$ since we will do further constructions). The dual graph of $M$ has two vertices and one edge, which is not our desired model for constructing nonseparable subgroups. Actually, we need a cycle in the dual graph of the $3$-manifold. So we use the following lemma to pass it to a further finite semicover, such that its dual graph consists of two vertices and two edges connecting these two vertices.

\begin{lemma}\label{2V2E}
Let $M=M_1\cup_T M_2$ be a $3$-manifold satisfying the conclusion of Lemma \ref{2V1E}, then there exists a $3$-manifold $N=N_1\cup_{T\cup T'} N_2$ with nonempty boundary such that the following hold.
\begin{enumerate}
\item $N_1$ is a cusped hyperbolic $3$-manifold, and $N_2$ is a geometric $3$-manifold.
\item $N_1\cap N_2=T\cup T'$ is a union of two tori, and $N=N_1\cup_{T\cup T'} N_2$ is a fibered $3$-manifold.
\item The homomorphism $H_1(T\cup T';\mathbb{Z})\to H_1(N_1;\mathbb{Z})$ induced by inclusion is injective.
\item $N$ is a finite semicover of $M$, and $\pi_1(N)$ is a subgroup of $\pi_1(M)$.
\item There exists a fibered surface $S$ in $N$, which is a union of two subsurfaces $S=S_1\cup_{c\cup c'} S_2$, such that $S_i=S\cap N_i$, $c=S\cap T$ and $c'=S\cap T'$. Moreover, $S$ and $S'$ are connected, while both $c$ and $c'$ consist of one circle.
\end{enumerate}
\end{lemma}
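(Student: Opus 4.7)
The plan is to realize $N$ as a finite semicover of $M$ whose dual graph has two vertices joined by exactly two edges. I would proceed in three stages: first build $N_1$ as a suitable finite cover of $M_1$ having two boundary tori above $T$; then glue on a compatible finite cover of $M_2$ to form $N$; and finally use Thurston's fibered cone theory to refine the fibered structure so that the promised fiber $S$ exists.

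For stage one, since $\pi_1(M_1)$ is LERF by Wise's theorem and the cusp subgroup $\pi_1(T)$ is a virtual retract of $\pi_1(M_1)$ by Theorem \ref{virtualretract}, I would produce a finite cover $\tilde{M}_1 \to M_1$ with two distinguished boundary tori $T, T'$, each mapping homeomorphically to the original $T$, and with the further property that $H_1(T \cup T'; \mathbb{Z}) \hookrightarrow H_1(\tilde{M}_1; \mathbb{Z})$. The multiple preimages come from separating a nontrivial coset of $\pi_1(T)$ in $\pi_1(M_1)$ and then passing to a further cover whose fundamental group contains both $\pi_1(T)$ and a chosen conjugate; homological injectivity is then arranged by a still deeper cover obtained via virtual retractions onto the peripheral subgroups, so that each torus boundary class survives nontrivially in $H_1$.

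For stage two, I would construct a finite cover $\tilde{M}_2 \to M_2$ with (at least) two boundary tori, both homeomorphic to $T$. When $M_2$ is Seifert fibered this is elementary via the Seifert fibration, and in the hyperbolic case it uses the same LERF/virtual retraction machinery applied to $\pi_1(M_2)$. I would then define $N = N_1 \cup_{T \cup T'} N_2$ with $N_1 = \tilde{M}_1$ and $N_2 = \tilde{M}_2$, where the two gluings are chosen as compatible lifts of the original gluing in $M$. The natural map $N \to M$, restricting to the given cover on each piece, is then a finite semicover by construction, and this immediately yields conditions (1), (3), and (4).

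Finally, for stage three, the fibered structure on $M$ pulls back to fibered structures on $N_1$ and $N_2$; these agree on the gluing tori because both restrict to the lifted fiber trace of $F \cap T$ (where $F$ is the fiber of $M$), so they assemble into a fibration of $N$, giving (2). To extract a fiber $S$ satisfying (5), I would invoke Thurston's theorem that fibered classes form an open cone in $H^1(N; \mathbb{R})$. Within this open cone, I would perturb the class to a nearby rational one whose restriction to each of $T$ and $T'$ is a primitive integral class (so that $S$ meets each in a single circle), and such that both pieces $S_1 := S \cap N_1$ and $S_2 := S \cap N_2$ are connected. Each of these requirements (primitivity of the restriction on a torus, connectedness of a piece) is a generic condition on rational classes in the fibered cone, so they can be realized simultaneously. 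The main obstacle, and the subtlest step of the proof, is verifying this simultaneous genericity, which relies on the openness of the fibered cone together with the density of classes meeting each of the finitely many conditions independently.
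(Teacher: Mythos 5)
Your stages 1--2 follow the paper's route for conditions (1)--(4), but the way you propose to get condition (3) does not suffice. Injectivity of $H_1(T\cup T';\mathbb{Z})\to H_1(N_1;\mathbb{Z})$ is a statement about the rank-$4$ group generated by \emph{both} peripheral copies; arranging ``virtual retractions onto the peripheral subgroups, so that each torus boundary class survives nontrivially in $H_1$'' only controls each $\mathbb{Z}^2$ separately and does not rule out relations between the two tori in $H_1(N_1)$ (a retraction onto each factor individually can coexist with a singular $4\times 4$ pairing matrix). The paper's key move is to apply the combination theorem to see that $\langle\pi_1(T),\,g^k\pi_1(T)g^{-k}\rangle\cong\mathbb{Z}^2\ast\mathbb{Z}^2$ is geometrically finite, and then to use Theorem \ref{virtualretract} to retract $\pi_1(N_1)$ onto this \emph{combined} subgroup; abelianizing the retraction is what forces $H_1(T_1)\oplus H_1(T_1')\cong\mathbb{Z}^4$ to inject. (A smaller point: in the Seifert case your ``elementary'' cover must map the chosen boundary tori homeomorphically to $T$, which is why the paper first passes to a cyclic cover of the fibered $M$ to make the Seifert piece a product $\Sigma\times S^1$.)

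The more serious gap is stage 3, precisely where you yourself flag the difficulty. Condition (5) amounts to finding a fibered class whose restrictions to $T$ and $T'$ are primitive in $H^1$ of the tori and whose restrictions to $N_1$ and $N_2$ are primitive in $H^1(N_i;\mathbb{Z})$; these are divisibility conditions on integral classes, not open or ``generic'' conditions, and openness of the fibered cone only lets you move the class slightly. Whether \emph{any} class of $H^1(N;\mathbb{Z})$ restricts primitively to $T$ is governed by the image of the restriction map, which is constrained from both sides of the gluing (for instance when $N_2=\Sigma'\times S^1$ the fiber-direction coefficients of the restrictions to $T$ and $T'$ are forced to be equal), so the simultaneous coprimality you need may simply be unavailable on the $N$ built in stages 1--2; no argument is offered. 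The paper avoids this issue altogether: it does not search for a better class on the same $N$, but writes $N=S\times I/\phi$ with $\phi$ reducible, and passes to the further finite semicover $\bar N=\big((S_1\cup_{c\cup c'}S_2)\times I\big)/\phi^k$, whose fiber is tautologically of the shape required in (5); when no two components of the cut-open fiber are adjacent along both $T$ and $T'$, it first modifies the fibration in a neighborhood of $T_1$ by the ``shift by $j$'' regluing to create such a pair. Since the lemma only asserts the existence of \emph{some} $N$, replacing $N$ by this semicover is legitimate, and it is exactly the step your perturbation strategy is missing; as written, your proof is incomplete at that point.
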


\begin{proof}
{\bf Claim.} There exists a $3$-manifold $N=N_1\cup_{T\cup T'}N_2$ satisfying conditions (1)-(4).

We first give the proof of this claim.

\bigskip

We take a base point of $M_1$ on $T$. For $\mathbb{Z}^2\cong\pi_1(T)<\pi_1(M_1)<\text{Isom}_+(\mathbb{H}^3)$, we take any $g\in \pi_1(M_1)$ which maps the fixed point of $\pi_1(T)$ on $S^2_{\infty}$ to a different point. By the Klein combination theorem (Section VII Theorem A.13 of \cite{Ma}), for large enough integer $k$, the subgroup of $\pi_1(M_1)$ generated by $\pi_1(T)$ and $g^k\pi_1(T)g^{-k}$ is isomorphic to the free product of these two groups, i.e. isomorphic to $\mathbb{Z}^2*\mathbb{Z}^2$, and we denote it by $H$.

Since $H<\pi_1(M_1)$ is not a surface subgroup, it is a geometrically finite subgroup. By Theorem \ref{virtualretract}, we can find a finite cover $N_1$ of $M_1$, such that $H<\pi_1(N_1)$ and there exists a retraction homomorphism $\pi_1(N_1)\to H$. Since hyperbolic $3$-manifolds have LERF fundamental groups, by passing to a further finite cover (still denote it by $N_1$), we can assume that $g^k\notin \pi_1(N_1)$, and $N_1$ has at least three boundary components.

Since $g^k\notin \pi_1(N_1)$, any embedded arc $\gamma$ in $N_1$ (starting from the lifted base point) corresponding to $g^k\in \pi_1(M)$ connects two different boundary components of $N_1$, and we denote them by $T_1$ and $T_1'$. Note that the restriction of covering map $N_1\to M_1$ maps both $T_1$ and $T_1'$ to $T$ by homeomorphisms. Then $H<\pi_1(N_1)$ corresponds to the fundamental group of the union of $T_1$, $T_1'$ and $\gamma$. Since $H=\pi_1(T_1\cup T_1' \cup \gamma)$ is a retraction of $\pi_1(N_1)$, $H_1(T_1\cup T_1'\cup \gamma;\mathbb{Z})\cong H_1(T_1\cup T_1';\mathbb{Z})$ is a retraction of $H_1(N_1;\mathbb{Z})$. So condition (3) holds for $N_1$.

If $M_2$ is a cusped hyperbolic $3$-manifold, by doing the same construction for $M_2$, we get a finite cover $N_2\to M_2$ such that two boundary components $T_2$ and $T_2'$ of $N_2$ are mapped to $T$ by homeomorphisms. By identifying $T_1$ and $T_1'$ with $T_2$ and $T_2'$ respectively, we get a semifinite cover $N=N_1\cup_{T\cup T'}N_2$ of $M$ satisfying conditions (1)-(4). Here we use $T$ to denote the image of $T_1$ and $T_2$, and use $T'$ to denote the image of $T_1'$ and $T_2'$.

If $M_2$ is a Seifert fibered space, before doing the above construction for $M_1$, we first do the following preparation. Since $M$ is a fibered $3$-manifold, we have $M=S\times I/\phi$, where $\phi:S\to S$ is a reducible homeomorphism on a surface $S$. By taking some finite cyclic cover $M'$ of $M$ along $S$, we can assume that $M'$ has two adjacent geometric pieces, such that one of them is a cusped hyperbolic $3$-manifold, and another one is homeomorphic to $\Sigma \times S^1$ with $\chi(\Sigma)<0$.

We take the union of these two adjacent pieces along a common torus and get our new $M=M_1\cup_T M_2$ with $M_2=\Sigma\times S^1$. Then we do the same construction for $M_1$ as above to get a finite cover $N_1$. For $M_2$, let $c$ be the boundary component of $\Sigma$ corresponding to the boundary component $T\subset \partial M_2$. Since $\chi(\Sigma)<0$, there exists a double cover $\Sigma'\to \Sigma$ such that there are two boundary components $c_2,c_2'\subset \partial\Sigma'$ that are mapped to $c$ by homeomorphisms.

Then $N_2=\Sigma'\times S^1$ is a finite cover of $M_2$. Let $T_2$ and $T_2'$ be the boundary components of $N_2$ corresponding to $c_2\times S^1$ and $c_2'\times S^1$ respectively, then they are both mapped to $T$ by homeomorphisms. We paste $N_1$ and $N_2$ together to get the desired finite semicover $N=N_1\cup_{T\cup T'}N_2$.

This finishes the proof of the claim.

\vspace{3mm}

Now $N=N_1\cup_{T\cup T'}N_2$ satisfies conditions (1)-(4), so we need to work on condition (5).

Since $M$ is a fibered $3$-manifold, the semicover $N$ has an induced fibered structure. The corresponding fibered surface $S$ might be more complicated than what we want in condition (5), since $S\cap N_i$, $S\cap T$ and $S\cap T'$ may not be connected.

We write $N$ as $N=S\times I/\phi$. Since $N$ has nontrivial torus decomposition, $\phi:S\to S$ is a reducible self-homeomorphism of $S$ (\cite{Th3}). Let $\mathcal{C}$ be the set of reduction circles such that $\phi|:S\setminus \mathcal{C}\to S\setminus \mathcal{C}$ is either pseudo-Anosov or periodic on each $\phi$-component.

We first suppose that there are two components $S_1$ and $S_2$ of $S\setminus \mathcal{C}$ such that $S_i\subset N_i$, while $S_1\cap S_2$ contains two circles $c$ and $c'$ with $c\subset T$ and $c'\subset T'$. Take a positive integer $k$, such that $\phi^k$ preserves each component of $S\setminus \mathcal{C}$ and each component of $\mathcal{C}$. In this case $N'=\big((S_1\cup_{c\cup c'} S_2)\times I\big)/\phi^k$ is a finite semi-cover of $N$. Let $\bar{N}_i=S_i\times I/\phi^k$, and let $\bar{T}$ and $\bar{T}'$ be the components of $\partial \bar{N}_1$ (also $\partial \bar{N}_2$) containing $c$ and $c'$ respectively. Then it is easy to check that $\bar{N}=\bar{N}_1\cup_{\bar{T}\cup \bar{T}'}\bar{N}_2$ satisfies all desired conditions.

If there are not two components of $S\setminus \mathcal{C}$ satisfying the above condition, we need to modify the fibered surface $S$. The new fibered surface is the Haken sum of $S$ and a multiple of $T_1$, and the detail is as the following.

We take a tubular neighborhood $N(T_1)$ of $T_1$ in $N_1$, and give it a coordinate by $N(T_1)=T_1\times I=(S^1 \times I)\times S^1$ such that $$S\cap N(T_1)=\big(\{a_1,a_2,\cdots,a_k\}\times I\big)\times S^1,$$ with $a_1,\cdots,a_k$ following a cyclic order on $S^1$. The fibered structure on $N(T_1)$ is given by a fibered structure of $S^1\times I$, and then it cross with $S^1$. For any integer $j$, we modify the fibered structure on $N(T_1)$ by modifying the fibered structure on $S^1\times I$, then cross with $S^1$. The new fibered structure on $S^1\times I$ is given by a union of disjoint embedded arcs $I_i\subset S^1\times I$, such that $I_i$ connects $(a_i,0)$ to $(a_{i+j},1)$ (modulo $k$), where $i=1,2,\cdots,k$. This fibered structure on $N(T_1)$ can be pasted with the original fibered structure of $N\setminus N(T_1)$ to get a new fibered structure of $N$.

If we start from one component $S_1\subset S\cap N_1$, take any component $S_2\subset S\cap N_2$ such that $S_1\cap S_2\cap T'\ne \emptyset$. Then $S_1\cap T_1$ and $S_2\cap T_2$ are two families of parallel circles on $T$, but maybe any two circles in these two families are not identified with each other. Then we take the above modification of fibered structure for a proper $j$, such that the new fibered surface satisfies the assumption of the previous case.
\end{proof}

Actually, condition (5) is not really necessary in the proof of Theorem \ref{dim3}, but it will make the immersed $\pi_1$-injective surface constructed in Proposition \ref{surface} in a simple shape.

\subsection{Construction of nonseparable surface subgroups}

In this section, we construct a $\pi_1$-injective properly immersed surface in the $3$-manifold $N=N_1\cup_{T\cup T'}N_2$ constructed in Lemma \ref{2V2E}, then prove this surface subgroup is not separable in $\pi_1(N)$.

The following proposition constructs a $\pi_1$-injective properly immersed subsurface in $N$, which is our candidate of nonseparable surface subgroup. Readers can compare this construction with the construction in Section 8 of \cite{Li1}.

\begin{proposition}\label{surface}
For the $3$-manifold $N=N_1\cup_{T\cup T'}N_2$ and fibered surface $S=S_1\cup_{c\cup c'} S_2$ constructed in Lemma \ref{2V2E}, there exists a connected $\pi_1$-injective properly immersed surface $i:\Sigma \looparrowright N$ such that the following hold.
\begin{enumerate}
\item $\Sigma$ is a union of connected surfaces as $\Sigma=\big(\Sigma_{1,1}\cup \Sigma_{1,2}\big)\cup \big(\cup_{k=1}^{2n} \Sigma_{2,k}\big)$, with $i(\Sigma_{1,j})\subset N_1$ and $i(\Sigma_{2,k})\subset N_2$.
\item The restriction of $i$ on $\Sigma_{1,j}$ and $\Sigma_{2,k}$ are embeddings, and their images are fibered surfaces in $N_1$ and $N_2$ respectively.
\item Each $\Sigma_{2,k}$ is a copy of $S_2$ in $N_2$, so $\Sigma_{2,k}$ intersects with both $T$ and $T'$ along exactly one circle.
\item $\Sigma_{1,1}\cap \Sigma_{2,1}$ consists of two circles $s$ and $s'$, with $i(s)\subset T$ and $i(s')\subset T'$.
\item $\Sigma_{1,1}\cap T$ consists of $A$ parallel copies of $c$, and $\Sigma_{1,1}\cap T'$ consists of $B$ parallel copies of $c'$, with $A\ne B$.
\end{enumerate}
\end{proposition}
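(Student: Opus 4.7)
The plan is to construct $\Sigma_{1,1}, \Sigma_{1,2}$ as fibers of two perturbations of the fibered class $\alpha_0 = [S_1] \in H^1(N_1;\mathbb{Z})$, chosen so that the perturbed classes have coprime divisibilities on $T$ and $T'$; this forces the resulting fibered surfaces to meet $T$ and $T'$ in different numbers of parallel circles, giving $A \neq B$. The $\Sigma_{2,k}$ will be $2n$ isotopic copies of $S_2$ in $N_2$, and $\Sigma$ is assembled by matching boundary circles.

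The key auxiliary input is a pair of classes $\beta_T, \beta_{T'} \in H^1(N_1;\mathbb{Z})$ restricting to $(\mu_0, 0)$ and $(0, \nu_0)$ in $H^1(T;\mathbb{Z}) \oplus H^1(T';\mathbb{Z})$, where $\mu_0 = \alpha_0|_T$ and $\nu_0 = \alpha_0|_{T'}$ are primitive (since $S_1 \cap T = c$ and $S_1 \cap T' = c'$ are each a single circle). Such lifts exist because the proof of Lemma \ref{2V2E}(3) actually produces an integral retraction $\pi_1(N_1) \to \pi_1(T) * \pi_1(T')$, whence a $\mathbb{Z}$-splitting $H_1(N_1;\mathbb{Z}) \cong H_1(T;\mathbb{Z}) \oplus H_1(T';\mathbb{Z}) \oplus K$ and, dually, a split surjection $H^1(N_1;\mathbb{Z}) \twoheadrightarrow H^1(T;\mathbb{Z}) \oplus H^1(T';\mathbb{Z})$.

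For a large positive integer $n$, set
\[
\alpha_1 = n\alpha_0 + \beta_{T'}, \qquad \alpha_2 = n\alpha_0 - \beta_{T'}.
\]
The restrictions are $\alpha_1|_T = n\mu_0$, $\alpha_1|_{T'} = (n+1)\nu_0$, $\alpha_2|_T = n\mu_0$, $\alpha_2|_{T'} = (n-1)\nu_0$. Since $\gcd(n, n\pm 1) = 1$, both $\alpha_i$ are primitive in $H^1(N_1;\mathbb{Z})$. Writing $\alpha_i = n(\alpha_0 \pm \tfrac{1}{n}\beta_{T'})$, the rescaled classes converge to $\alpha_0$, so by Thurston's openness of the fibered cone, both $\alpha_i$ lie in the same fibered cone as $\alpha_0$ for $n$ sufficiently large. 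Let $\Sigma_{1,1}, \Sigma_{1,2}$ be the corresponding connected fibers. Because $\alpha_i|_T$ shares its kernel with $\mu_0$, the boundary circles of $\Sigma_{1,j}$ on $T$ are parallel copies of $c$, with count equal to the restriction divisibility; this yields $A = A' = n$ on $T$, and $B = n+1$, $B' = n-1$ on $T'$, so $A \neq B$, confirming (5).

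To finish, take $2n$ copies of $S_2$ in $N_2$ (each embedded, isotopic to $S_2$), arranged via isotopies within collars of $T$ and $T'$ so that their $T$-boundary (resp.\ $T'$-boundary) circles occupy exactly the $A + A' = 2n$ (resp.\ $B + B' = 2n$) positions already determined by the boundaries of $\Sigma_{1,1} \cup \Sigma_{1,2}$. Labeling these $\Sigma_{2,1}, \ldots, \Sigma_{2,2n}$, glue each boundary circle of a $\Sigma_{1,j}$ to the matching boundary circle of some $\Sigma_{2,k}$; since $A, B \geq 1$, this can be done so that $\Sigma_{2,1}$ is glued to $\Sigma_{1,1}$ on both $T$ and $T'$, yielding (4). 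The resulting $i : \Sigma \to N$ restricts to an embedding on each piece, and its $\pi_1$-injectivity follows from Bass--Serre theory, since $i$ is a morphism of graphs of spaces whose vertex maps (fibered surfaces in 3-manifold pieces) and edge maps (essential circles in tori) are all $\pi_1$-injective. The main obstacle is the production of $\alpha_1, \alpha_2$ with coprime divisibilities on $T$ and $T'$, which rests on the integer retraction from Lemma \ref{2V2E}(3) together with Thurston's openness of the fibered cone; after that, the construction is combinatorial gluing plus a standard Bass--Serre injectivity argument.
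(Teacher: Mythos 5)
Your construction of the two fibered classes is essentially the paper's: perturb the class of $S_1$ in two opposite directions by a class that restricts trivially to one of the two tori, use openness of the fibered cone and a coprimality count to get primitive (hence connected-fiber) classes meeting $T$ and $T'$ in different numbers of parallel copies of $c$ and $c'$, then glue in $2n$ parallel copies of $S_2$. One caveat: Lemma \ref{2V2E} only asserts that $H_1(T\cup T';\mathbb{Z})\to H_1(N_1;\mathbb{Z})$ is \emph{injective}; the retraction onto $\pi_1(T)\ast\pi_1(T')$ is produced in the proof of the claim for an intermediate manifold, before the further finite semicover taken to arrange condition (5), and it is not asserted (nor obvious) that it survives that passage. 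So you cannot assume exact lifts $\beta_T,\beta_{T'}$ with restrictions $(\mu_0,0)$ and $(0,\nu_0)$. This is easily repaired exactly as in the paper: injectivity gives a rank-$4$ direct summand containing $H_1(T\cup T';\mathbb{Z})$ with finite index, hence a class $\beta$ with $\beta|_{T}=0$ and $\beta|_{T'}=l\nu_0$ for some $l\in\mathbb{Z}_+$, and one then takes $n>l$ with $\gcd(n,l)=1$, getting $A=n\pm l$ versus $B=n$ (or your symmetric variant).

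The genuine gap is the $\pi_1$-injectivity step. The principle you invoke --- that a map of graphs of spaces which is $\pi_1$-injective on each vertex space and each edge space is $\pi_1$-injective on the total space --- is false in general: in an amalgam $A\ast_C B$, a nontrivial element of a domain vertex group that is \emph{not} in a domain edge group may still be sent into the target edge group $C$ (or into the wrong conjugate of it), so reduced words upstairs need not map to reduced words downstairs. To run a normal-form (or innermost-arc) argument here you must additionally verify, using the fibration structure, statements of the following kind: an element of $\pi_1(\Sigma_{1,j})$ lying in a conjugate of $\pi_1(T)<\pi_1(N_1)$ must lie in the cyclic subgroup carried by one of the boundary circles of $\Sigma_{1,j}$ on $T$ (this uses that $\pi_1(\Sigma_{1,j})$ is the kernel of the fibered class, so its intersection with $\pi_1(T)$ is the cyclic group generated by the slope of $c$), and an essential arc in a fiber piece joining two of its boundary circles on the same torus cannot be homotoped into that torus --- in particular its endpoints would have to lie on the same boundary circle, which is detected by algebraic intersection numbers of the disk with the fiber. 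This verification is precisely the content of the paper's proof of $\pi_1$-injectivity (transversality to $T\cup T'$, an innermost arc $\beta$ in the disk, and an intersection-number argument forcing the arc to be peripheral, reducing the intersection count). As written, your appeal to Bass--Serre theory skips this, so the injectivity claim is unsupported even though the conclusion is correct.
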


\begin{proof}
When we cut $N$ along $T\cup T'$ and cut $S$ along $c\cup c'$, wel use $T_i$ and $T_i'$ to denote the copies of $T$ and $T'$ in $N_i$ respectively, and use $c_i$ and $c_i'$ to denote the copies of $c$ and $c'$ in $S_i$ respectively.

Let $\alpha\in H^1(N;\mathbb{Z})$ be the fibered class dual to $S$, and let $\alpha_1=\alpha|_{N_1}$. Then $\alpha_1|_{T_1}$ is dual to $c_1\subset T_1$, and $\alpha_1|_{T_1'}$ is dual to $c_1'\subset T_1'$.

Since $H_1(T_1\cup T_1';\mathbb{Z})\to H_1(N_1;\mathbb{Z})$ is injective, there exists a direct summand $A<H_1(N_1;\mathbb{Z})$ such that $A\cong \mathbb{Z}^4$ and $H_1(T_1\cup T_1';\mathbb{Z})<A$. Since $\mathbb{Z}^4\cong H_1(T_1\cup T_1';\mathbb{Z})<A\cong \mathbb{Z}^4$ is a finite index subgroup, there exists a homomorphism $\tau:A\to \mathbb{Z}$ such that $\tau|_{H_1(T_1;\mathbb{Z})}$ is equall to $l\alpha_1|_{T_1}$ for some $l\in \mathbb{Z}_+$, and $\tau|_{H_1(T_1';\mathbb{Z})}=0$.

Let $\phi:H_1(N_1;\mathbb{Z})\to A$ be a retraction given by the direct summand, then we get a cohomology class $\beta\in H^1(N_1;\mathbb{Z})$ defined by $\tau\circ \phi:H_1(N_1;\mathbb{Z})\to \mathbb{Z}$. By the construction of $\tau$, $\beta|_{T_1}=\tau\circ\phi|_{T_1}=\tau|_{T_1}=l\alpha_1|_{T_1}$ for some $l\in \mathbb{Z}_+$ and $\beta|_{T_1'}=0$.

Since $\alpha_1$ is a fibered class on $N_1$, for large enough $n\in \mathbb{Z}_+$, $\alpha_{1,1}=n\alpha_1+\beta$ and $\alpha_{1,2}=n\alpha_1-\beta$ are both fibered classes in $H^1(N_1;\mathbb{Z})$. Here we can also assume that $n>l$ and $\text{gcd}(n,l)=1$.

Since $\alpha_{1,1}|_{T_1}$ is dual to $n+l$ copies of $c_1$, $\alpha_{1,1}|_{T_1'}$ is dual to $n$ copies of $c_1'$, and $\text{gcd}(n,l)=1$, $\alpha_{1,1}\in H^1(N_1;\mathbb{Z})$ is a primitive class. Similarly, $\alpha_{1,2}\in H^1(N_1;\mathbb{Z})$ is also primitive.

Let $\Sigma_{1,1} \subset N_1$ be the connected fibered surface dual to $\alpha_{1,1}\in H^1(N_1;\mathbb{Z})$ and $\Sigma_{1,2} \subset N_1$ be the connected fibered surface dual to $\alpha_{1,2}$. Then $\Sigma_{1,1}\cap T_1$ consists of $A=n+l$ copies of $c_1$ (as oriented curves), $\Sigma_{1,1}\cap T_1'$ consists of $B=n$ copies of $c_1'$, $\Sigma_{1,2}\cap T_1$ consists of $n-l$ copies of $c_1$, and $\Sigma_{1,2}\cap T_1'$ consists of $n$ copies of $c_1'$. So $(\Sigma_{1,1}\cup \Sigma_{1,2})\cap T_1$ and $(\Sigma_{1,1}\cup \Sigma_{1,2})\cap T_1'$ consist of $2n$ (oriented) copies of $c_1$ and $c_1'$ respectively.

Note that both $S_2\cap T_2$ and $S_2\cap T_2'$ are exactly one (oriented) copy of $c_2$ and $c_2'$ respectively. We take $2n$ copies of $S_2$ in $N_2$, and denote them by $\Sigma_{2,k}$, with $k=1,2.\cdots,2n$. Then we identify parallel circles in $(\Sigma_{1,1}\cup \Sigma_{1,2})\cap T_1$ with $(\cup_{k=1}^{2n} \Sigma_{2,k})\cap T_2$ on $T=T_1=T_2$, and identify parallel circles in $(\Sigma_{1,1}\cup \Sigma_{1,2})\cap T_1'$ with $(\cup_{k=1}^{2n} \Sigma_{2,k})\cap T_2'$ on $T'=T_1'=T_2'$ to get an immersed surface $\Sigma$. In the identification process, we first identify one circle in $\Sigma_{1,1}\cap T_1$ with the circle in $\Sigma_{2,1}\cap T_2$, and identify one circle in $\Sigma_{1,1}\cap T_1'$ with the circle in $\Sigma_{2,1}\cap T_2'$. Then we identify the remaining circles arbitrarily. There are actually many ways to do the further identification, since we can isotopy $\Sigma_{2,k_0}$ such that its intersection with $T_2$ slides over the other circles $\Sigma_{2,k}\cap T_2$, while the other surfaces in $\{\Sigma_{2,k}\}$ are fixed.

It is easy to see that $i:\Sigma\looparrowright N$ is a properly immersed surface, and it satisfies conditions (1)-(5) in the proposition, by the construction.

Moreover, by condition (3) and (5), there exists some $\Sigma_{2,k_0}$ such that both $\Sigma_{1,1}\cap \Sigma_{2,k_0}$ and $\Sigma_{1,2}\cap \Sigma_{2,k_0}$ are not empty. So $\Sigma_{1,1}$ and $\Sigma_{1,2}$ lie in the same connected component of $\Sigma$. Then $\Sigma$ must be connected, since each $\Sigma_{2,k}$ intersects with at least one of $\Sigma_{1,1}$ and $\Sigma_{1,2}$.

\vspace{3mm}

Now we show that $i$ is $\pi_1$-injective by using classical $3$-manifold topology. Suppose there is a map $j:S^1\to \Sigma$ which is not null-homotopic in $\Sigma$, but $i\circ j:S^1\to N$ is null-homotopic in $N$.

We can assume that $i\circ j$ is transverse to the decomposition tori $T\cup T'$, and $j$ minimizes the number of points in $(i\circ j)^{-1}(T\cup T')\subset S^1$, in the homotopy class of $j$. This number is not zero, otherwise, it contradicts with the $\pi_1$-injectivity of fibered surfaces.

Since $i\circ j$ is null-homotopic, it can be extended to a map $k:D^2\to N$ such that $k|_{S^1}=i\circ j$. We can homotopy $k$ relative to $S^1$  such that it is transverse to $T\cup T'$, and $k^{-1}(T\cup T')$ consists of simple arcs in $D^2$.

Then there exists an arc $\alpha\subset S^1$ and an arc component $\beta$ in $k^{-1}(T\cup T')\subset D^2$, such that $\alpha$ and $\beta$ share end points and there are no other components of $k^{-1}(T\cup T')$ lying in the subdisc $B\subset D^2$ bounded by $\alpha\cup \beta$. Without loss of generality, we suppose that $j(\alpha)$ lies in $\Sigma_{1,1}\subset N_1$, $k(\beta)\subset T$, and $K(B)\subset N_1$. Then it is easy to see that the $k$-images of two end points of $\alpha$ lie in the same component of $\Sigma_{1,1}\cap T$, by considering the algebraic intersection number between $\Sigma_{1,1}$ and $\alpha\cup \beta$. Moreover, $k|{\beta}:\beta\to T$ is homotopic to a map into $\Sigma_{1,1}\cap T$, relative to the boundary of $\beta$.

Then it is routine to check that $j|_{\alpha}:\alpha\to \Sigma_{1,1}$ is homotopy to a map with image in $i^{-1}(T)$, relative to the boundary of $\alpha$. After a further homotopy of $j$ supporting on a neighborhood of $\alpha$, we get another $j':S^1\to \Sigma$ which is homotopy to $j$ and has fewer number of points in $(i\circ j')^{-1}(T\cup T')\subset S^1$.

So we get a contradiction with the minimality of $j$, and $i:\Sigma\looparrowright N$ is $\pi_1$-injective.
\end{proof}

The following proposition proves the nonseparability of $i_*(\pi_1(\Sigma))<\pi_1(N)$ constructed in Proposition \ref{surface}. The proof is essentially checking that the spirality character defined in \cite{Li1} for $\Sigma\to N$  is nontrivial, but we do not use the terminology of spirality character, since the picture is relatively simple.

\begin{proposition}\label{nonseparable}
For the properly immersed surface $i:\Sigma\looparrowright N$ constructed in Proposition \ref{surface}, $i_*(\pi_1(\Sigma))<\pi_1(N)$ is a nonseparable subgroup.
\end{proposition}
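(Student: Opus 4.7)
I would argue by contradiction: assume $H := i_*\pi_1(\Sigma)$ is separable in $\pi_1(N)$. Since $\Sigma$ is compact and $i:\Sigma\looparrowright N$ is proper, the self-intersections of $i$ correspond to only finitely many nontrivial double cosets $Hg_jH$ in $\pi_1(N)\setminus H$. Separability of $H$ lets me pick a finite-index subgroup $K<\pi_1(N)$ containing $H$ and missing all the $g_j$; passing to the corresponding finite cover $p:\hat N\to N$ yields a two-sided properly embedded lift $\hat\Sigma\subset \hat N$ of $\Sigma$. After taking further finite covers I may assume the geometric decomposition of $\hat N$ equals $p^{-1}(T\cup T')$ and that $\hat\Sigma$ is transverse to every decomposition torus.

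Next I would analyze the piecewise structure of $\hat\Sigma$. Because $\hat\Sigma$ is an embedded lift of the immersed $\Sigma$, each connected component of $\hat\Sigma$ intersected with a piece of $\hat N$ is simply a lift of one of the pieces $\Sigma_{1,j}$ or $\Sigma_{2,k}$ of $\Sigma$. Condition (5) of Proposition \ref{surface} — that $\Sigma_{1,1}$ and $\Sigma_{2,1}$ share exactly two circles $s\subset T$ and $s'\subset T'$ — lifts to a 2-cycle in the dual graph of $\hat N$: there is a hyperbolic piece $\hat N_A$ containing $\Sigma_{1,1}$ and a piece $\hat N_B$ containing $\Sigma_{2,1}$, joined by two decomposition tori $\tilde T$ and $\tilde T'$ covering $T$ and $T'$ respectively.

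Finally, the contradiction would come from tracking boundary-circle counts around this 2-cycle. On the $\hat N_A$-side, $\Sigma_{1,1}$ contributes $A$ circles on the over-$T$ tori and $B$ circles on the over-$T'$ tori bordering $\hat N_A$; on the $\hat N_B$-side, $\Sigma_{2,1}$ contributes exactly one circle on each side. For $\hat\Sigma$ to be embedded, the matching condition on every decomposition torus of $\hat N$ must be satisfied, and traversing the 2-cycle gives a global compatibility condition that forces the ratio $A/B$ to equal $1$, contradicting $A\neq B$. The main obstacle will be making this precise: one must account for how the lifts of $\Sigma_{1,1}$, $\Sigma_{1,2}$, and the $2n$ copies of $\Sigma_{2,k}$ distribute across the pieces of $\hat N$ (with the hyperbolic and Seifert pieces of $\hat N$ possibly having high covering degrees), and show how their boundary-circle contributions aggregate at each decomposition torus to reduce to the contradiction $A=B$. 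This matching argument is essentially the computation of the aspirality character of \cite{Li1} applied to this 2-cycle, and it uses the primitivity of $\alpha_{1,1},\alpha_{1,2}$ together with $\gcd(n,l)=1$ to force the only solution of the matching equations to collapse to $l=0$.
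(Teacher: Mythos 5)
Your overall strategy---assume separability, promote the immersion to an embedding in a finite cover $\hat N$, then derive a contradiction from the $2$-cycle formed by $\Sigma_{1,1}$ and $\Sigma_{2,1}$ together with $A\neq B$---matches the paper's, but the step you defer as ``the main obstacle'' is exactly the heart of the proof, and as stated your contradiction does not yet follow. Embeddedness of $\hat\Sigma$ in $\hat N$ alone imposes no numerical constraint forcing $A/B=1$: on a given decomposition torus of $\hat N$ the circles coming from $\Sigma_{1,1}$, $\Sigma_{1,2}$ and the various $\Sigma_{2,k}$ may be distributed over several tori covering $T$ and $T'$, and the circle-matching can be (indeed, by construction is) consistent. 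The missing ingredient is a second covering step: since $i$ is a \emph{proper} immersion, the embedded $\Sigma\subset\hat N$ is Poincar\'e dual to a class $\sigma\in H^1(\hat N;\mathbb{Z})$, and one passes to the $K$-fold cyclic cover $\bar N$ dual to $\sigma$, where $K$ is the least common multiple of the numbers of circles of $\Sigma$ on the decomposition tori of $\hat N$; in $\bar N$ the surface $\Sigma$ still embeds and now meets \emph{every} decomposition torus in exactly one circle. The decisive observation is then that $\Sigma_{1,1}$ is a connected fibered surface in both $N_1$ and the piece $\bar N_1\subset\bar N$ containing it, so $\bar N_1\to N_1$ is the cyclic cover dual to $\Sigma_{1,1}$ and its degree can be computed on either adjacent torus, giving $B\cdot\mathrm{deg}(\bar T\to T)=\mathrm{deg}(\bar N_1\to N_1)=A\cdot\mathrm{deg}(\bar T'\to T')$; the same computation for $\Sigma_{2,1}$ (one circle on each of $T,T',\bar T,\bar T'$) gives $\mathrm{deg}(\bar T\to T)=\mathrm{deg}(\bar N_2\to N_2)=\mathrm{deg}(\bar T'\to T')$, and the two relations force $A=B$, contradicting condition (5). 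Without this cyclic-cover-and-degree argument your ``global compatibility condition'' is not established; moreover your appeal to primitivity of $\alpha_{1,1},\alpha_{1,2}$ and $\gcd(n,l)=1$ points in the wrong direction, since those were only needed in Proposition \ref{surface} to make the fibered surfaces connected, while the final contradiction is purely the degree count.

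Two smaller points. First, before Scott's criterion (or your double-coset variant) applies, you must know that $\Sigma$ embeds in the cover $\tilde N$ corresponding to $H=i_*\pi_1(\Sigma)$: a double point of $i$ could a priori come from a single elevation, i.e.\ from an element of $H$, which separability cannot remove. The paper rules this out by noting that each piece of $\Sigma$ is a fibered surface in the corresponding piece of $N$, so $\tilde N\cong\Sigma\times\mathbb{R}$ and the canonical lift is an embedding. Second, properness of $i$ is not a convenience: it is what makes $\hat\Sigma$ dual to a class in $H^1(\hat N;\mathbb{Z})$, which the cyclic covering step requires (cf.\ Remark \ref{geovsalg}).
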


\begin{proof}
Suppose that $i_*(\pi_1(\Sigma))<\pi_1(N)$ is separable, we will get a contradiction.

Let $\tilde{N}$ be the covering space of $N$ corresponding to $i_*(\pi_1(\Sigma))$. Since each component of $\Sigma\cap i^{-1}(N_k)$ is a fibered surface in $N_k$ for $k=1,2$, it is easy to see that $\tilde{N}$ is homeomorphic to $\Sigma \times \mathbb{R}$. So $i:\Sigma\looparrowright N$ lifts to an embedding $\Sigma \hookrightarrow \tilde{N}$.

Since $i_*(\pi_1(\Sigma))<\pi_1(N)$ is separable, by \cite{Sc}, there exists an intermediate finite cover $\hat{N}\to N$ of $\tilde{N}\to N$ such that $i:\Sigma\looparrowright N$ lifts to an embedding $\hat{i}:\Sigma\hookrightarrow \hat{N}$.

Since $i:\Sigma\looparrowright N$ is a proper immersion, $\hat{i}:\Sigma\hookrightarrow \hat{N}$ is a proper embedding. So $\Sigma$ defines a nontrivial cohomology class $\sigma\in H^1(\hat{N};\mathbb{Z})$.

For each decomposition torus $\hat{T}_s\subset \hat{N}$, suppose $\Sigma\cap \hat{T}_s$ consists of $k_s$ parallel circles. Let $K$ be the least common multiple of all $k_s$. By taking the $K$-sheet cyclic cover of $\hat{N}$ along $\Sigma$ (corresponding to the kernel of $H_1(\hat{N};\mathbb{Z})\xrightarrow{\sigma}\mathbb{Z}\to \mathbb{Z}_K$), we get a further finite cover $\bar{N}\to N$. Then $\Sigma$ embeds into $\bar{N}$, and it intersects with each decomposition torus of $\bar{N}$ exactly once.

Let $\bar{N}_1$ and $\bar{N}_2$ be the geometric pieces of $\bar{N}$ containing $\Sigma_{1,1}$ and $\Sigma_{2,1}$ respectively. Since $\Sigma_{1,1}\cap \Sigma_{2,1}=s\cup s'$, let $\bar{T}$ and $\bar{T}'$ be the decomposition tori in $\bar{N}_1\cap \bar{N}_2$ containing $s$ and $s'$ respectively. Then the finite cover $\bar{N}\to N$ induces finite covers:
\begin{equation*}
\begin{array}{cc}
\bar{N}_1\to N_1,\ \ \bar{N}_2\to N_2,\\
\bar{T}\to T,\ \ \bar{T}'\to T'.
\end{array}
\end{equation*}

Since $\bar{T}\to T$ and $\bar{T}'\to T'$ are both induced by $\bar{N}_1\to N_1$ and $\bar{N}_2\to N_2$, we will get two relations between $\text{deg}(\bar{T}\to T)$ and $\text{deg}(\bar{T}'\to T')$ and get a contradiction.

Since $\Sigma_{1,1}$ is an embedded fibered surface in both $\bar{N}_1$ and $N_1$, $\bar{N}_1$ is a finite cyclic cover of $N_1$ along $\Sigma_{1,1}$. Similarly, $\bar{N}_2$ is a finite cyclic cover of $N_2$ along $\Sigma_{2,1}$.

Since $\Sigma_{1,1}\cap T$ consists of $A$ parallel circles and $\Sigma_{1,1}\cap T'$ consists of $B$ parallel circles, while $\Sigma_{1,1}\cap \bar{T}$ and $\Sigma_{1,1}\cap \bar{T}'$ are both only one circle, $\bar{N}_1\to N_1$ is a cyclic cover whose degree is a multiple of $\text{lcm}(A,B)$, and
\begin{equation}\label{relation1}
A\cdot \text{deg}(\bar{T}\to T)=\text{deg}(\bar{N}_1\to N_1)=B\cdot \text{deg}(\bar{T}'\to T').
\end{equation}

We also have that $\Sigma_{2,1}$ is an embedded fibered surface in both $\bar{N}_2$ and $N_2$. Since $\Sigma_{2,1}\cap T$, $\Sigma_{2,1}\cap T'$, $\Sigma_{2,1}\cap \bar{T}$ and $\Sigma_{2,1}\cap \bar{T}'$ are all just one circle, and $\bar{N}_2\to N_2$ is a finite cyclic cover, we have
\begin{equation}\label{relation2}
\text{deg}(\bar{T}\to T)=\text{deg}(\bar{N}_2\to N_2)=\text{deg}(\bar{T}\to T').
\end{equation}

Equations (\ref{relation1}) and (\ref{relation2}) imply that $A=B$, which contradicts with condition (5) in Proposition \ref{surface}. So $i_*(\pi_1(\Sigma))$ is a nonseparable subgroup of $\pi_1(N)$.
\end{proof}

\begin{remark}\label{geovsalg}
From the proof of Proposition \ref{nonseparable}, readers can see that the main ingredient for proving the nonseparability of $\pi_1(\Sigma)$ is the subsurface $\Sigma_{1,1}\cup_{s\cup s'} \Sigma_{2,1}$. However, the author can not prove that $\pi_1(\Sigma_{1,1}\cup_{s\cup s'} \Sigma_{2,1})$ is nonseparable in $\pi_1(N)$ yet, although it seems quite plausible.

In the proof of Proposition \ref{nonseparable}, we do need the properness of the immersed surface $i:\Sigma\looparrowright N$, so that we can do cyclic cover of $\hat{N}$ along $\Sigma$ to get $\bar{N}$, and then get the contradiction. Actually, most of the proof can be translated to purely group theoretical language, except that the author does not know how to interpret "properly immersed surface" algebraically.
\end{remark}

\subsection{Proof of Theorem \ref{dim3}}

Now we are ready to prove Theorem \ref{dim3}.

\begin{proof}
Suppose that $M$ supports one of eight Thurston's geometries. If $M$ supports the $S^3$- or $S^2\times \mathbb{E}^1$-geometry, since the fundamental group is finite or virtually abelian, LERFness trivially holds. If $M$ supports the $\mathbb{E}^3$-, $\text{Nil}$-, $\mathbb{H}^2\times \mathbb{E}^1$- or $\widetilde{\text{PSL}_2(\mathbb{R})}$-geometry, $M$ is a Seifert manifold and LERFness is proved in \cite{Sc}. If $M$ supports the $\text{Sol}$-geometry, $M$ is virtually a torus bundle over circle, and a proof of LERFness can be found in \cite{NW2}. If $M$ is a hyperbolic $3$-manifold, LERFness is shown by the celebrated works of Wise (\cite{Wi} for cusped case) and Agol (\cite{Ag3} for closed case).

\vspace{3mm}

Now we need to show that non-geometric $3$-manifolds have nonLERF fundamental groups. We first suppose that $M$ is a mixed $3$-manifold, i.e. $M$ has a hyperbolic piece.

If $M$ is not a closed manifold, then Lemma \ref{2V1E} and Lemma \ref{2V2E} imply that $M$ has a finite semicover $N=N_1\cup_{T\cup T'} N_2$ satisfying the conditions in Lemma \ref{2V2E}. In particular, $\pi_1(N)$ is a subgroup of $\pi_1(M)$. Then Proposition \ref{surface} constructs a non-closed surface subgroup (free subgroup) $\pi_1(\Sigma)<\pi_1(N)$, and Proposition \ref{nonseparable} shows that $\pi_1(\Sigma)$ is not separable in $\pi_1(N)$. Finally, Lemma \ref{subgroup} implies that $\pi_1(\Sigma)$ is not separable in $\pi_1(M)$, thus $\pi_1(M)$ is not LERF.

\vspace{3mm}

If $M$ is a closed mixed $3$-manifold, then the above proof also shows the existence of a nonseparable free subgroup in $\pi_1(M)$. We need also to construct a nonseparable closed surface subgroup.

Let $N\to M$ be the finite semicover constructed in Lemma \ref{2V2E} (with $\partial N\ne \emptyset$), and $\Sigma\looparrowright N$ be the $\pi_1$-injective properly immersed surface constructed in Proposition \ref{surface}. To make the geometric picture simpler, we use Lemma \ref{semicover} to find a finite cover $M'$ of $M$, such that $N$ lifts to an embedded submanifold of $M'$.

In this case, the induced map $\Sigma\looparrowright M'$ is an immersion, but is not a proper immersion. So we can not use the proof of Proposition \ref{nonseparable} for this $\Sigma$. Now we extend $\Sigma$ to a closed surface $\Sigma'$, with an immersion $j:\Sigma'\looparrowright M'$. Then we can apply the argument in the proof of Proposition \ref{nonseparable} to prove nonseparability of $\pi_1(\Sigma')<\pi_1(M')$.

The construction of $j:\Sigma'\looparrowright M'$ is actually done in Section 8 of \cite{Li1}, so we only give a sketch here.

Let the boundary components of $\Sigma$ be $s_1,\cdots,s_m$, with each $s_i$ lying on a decomposition torus $T_i\subset M'$. By Theorem 4.11 of \cite{DLW}, there exists an essentially immersed surface $R_i\looparrowright M'$, such that $\partial R_i$ consists of two components $b_i$ and $\bar{b_i}$, with $b_i$ and $\bar{b}_i$ mapped to a positive and a negative multiple of $s_i\subset T_i$ respectively, with the same covering degree. Moreover, a neighborhood of $\partial R_i$ in $R_i$ is mapped to the side of $T_i$ other than $N$, and $R_i$ intersects with $\mathcal{T}_{M'}$ minimally.

Then we take some finite cover of $\hat{\Sigma}\to \Sigma$ such that each boundary component of $\hat{\Sigma}$ that is mapped to $s_i$ has mappeing degree $\text{deg}(b_i\to s_i)$, and take another copy of $\hat{\Sigma}$ with opposite orientation. Together with proper number of copies of $R_i,i=1,\cdots,m$, they can be pasted together to get a $\pi_1$-injective immersed closed surface $\Sigma'\looparrowright M'$. A similar argument as in Proposition \ref{nonseparable} can be applied to $\hat{\Sigma}\subset \Sigma'$ to show that $\pi_1(\Sigma')$ is not separable in $\pi_1(M')$, so it is not separable in $\pi_1(M)$.

\vspace{3mm}

If $M$ is a graph manifold, it was already showed in \cite{NW2} that $\pi_1(M)$ is not LERF. So we do not give detailed construction of nonseparable surface subgroups.

The first step of the construction is to show that $M$ has a finite semicover $N=S\times I/\phi$, where $S=S_1\cup_{c\cup c'}S_2$ and $\phi$ is a composition of Dehn twists along $c$ and $c'$. Then we perturb the fibered structures on both $N_1$ and $N_2$ (since Seifert fibered spaces have less flexible fibered structures) to get a $\pi_1$-injective properly immersed subsurface similar to what we get in Proposition \ref{surface}. Then a similar argument as in Proposition \ref{nonseparable} shows that this surface subgroup is not separable. Here we do need to use the fact that two adjacent Seifert pieces in a graph manifold have incompatible regular fibers on their intersection.

However, it seems not easy to construct a nonseparable closed surface subgroup in a closed graph manifold.
\end{proof}

\begin{remark}\label{engulf}

In \cite{RW}, the authors constructed $\pi_1$-injective properly immersed subsurface $\Sigma\looparrowright M$ for some graph manifold $M$. Then \cite{NW1} proved that $\pi_1(\Sigma)$ is not contained in any finite index subgroup of $\pi_1(M)$ (not engulfed). In the proof of \cite{NW1}, only the infinite plane property of surfaces constructed in \cite{RW} is used. Since the surfaces we constructed in the proof of Theorem \ref{dim3} also have the infinite plane property, for any mixed $3$-manifold $M$, we can find a finite cover $M'\to M$ and a $\pi_1$-injective properly immersed subsurface $\Sigma\looparrowright M'$ such that $\pi_1(\Sigma)$ is not contained in any finite index subgroup of $\pi_1(M')$.

In \cite{NW2}, it is shown that all graph manifold groups contain $$L=\langle x,y,r,s\ |\ rxr^{-1}=x, ryr^{-1}=y, sxs^{-1}=x\rangle$$ as a subgroup. Then the nonLERFness of $L$ implies the nonLERFness of all graph manifold groups. It is easy to see that some mixed manifolds, e.g. double of any cusped hyperbolic $3$-manifold, do not contain $L$ as a subgroup in their fundamental groups. So $L$ is not the source of the nonLERFness of these groups.
\end{remark}

Since any free product of LERF groups is still LERF, we have the following direct corollary of Theorem \ref{dim3}.

\begin{corollary}
Let $M$ be a compact orientable $3$-manifold with empty or tori boundary, then $\pi_1(M)$ is LERF if and only if all prime factors of $M$ support one of Thurston's eight geometries.
\end{corollary}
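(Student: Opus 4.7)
The plan is to reduce to Theorem \ref{dim3} using the Kneser--Milnor prime decomposition and the fact that LERFness is preserved under both free products and passing to subgroups.

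First I would write $M = M_1 \# M_2 \# \cdots \# M_k$ as a connected sum of prime factors. Since $\partial M$ consists of tori, none of the boundary tori is split by a prime decomposition sphere, so each prime factor inherits empty or tori boundary. By van Kampen's theorem applied to the connected sum (where $S^2$ is simply connected), $\pi_1(M) \cong \pi_1(M_1) * \cdots * \pi_1(M_k)$. Each prime factor is either irreducible or a copy of $S^2 \times S^1$; in the latter case it supports the $\mathbb{S}^2 \times \mathbb{E}^1$ geometry automatically.

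For the ``if'' direction, assume every $M_i$ supports one of Thurston's eight geometries. By Theorem \ref{dim3} applied to each irreducible factor (and by inspection for $S^2 \times S^1$), each $\pi_1(M_i)$ is LERF. I would then invoke the classical theorem that a free product of LERF groups is LERF (Burns, for finitely many factors) to conclude that $\pi_1(M) = \pi_1(M_1) * \cdots * \pi_1(M_k)$ is LERF.

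For the ``only if'' direction, assume $\pi_1(M)$ is LERF. Each free factor $\pi_1(M_i)$ embeds as a subgroup of $\pi_1(M)$, so by Lemma \ref{subgroup} each $\pi_1(M_i)$ is LERF. If $M_i$ is irreducible, Theorem \ref{dim3} forces $M_i$ to be geometric; if $M_i = S^2 \times S^1$, it is already geometric. Thus every prime factor supports one of the eight geometries, completing the equivalence. The proof is essentially bookkeeping on top of Theorem \ref{dim3}; there is no genuine obstacle, but it is worth being careful that the prime decomposition respects the boundary hypothesis and that free-product LERFness is invoked correctly.
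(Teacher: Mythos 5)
Your argument is correct and matches the paper's intended proof, which is stated as a direct corollary of Theorem \ref{dim3} via the Kneser--Milnor decomposition and the fact that a free product of LERF groups is LERF. Both the ``if'' direction (via Burns's theorem on free products) and the ``only if'' direction (via Lemma \ref{subgroup} applied to the free factors) are exactly the bookkeeping the paper has in mind.
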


The knot complements in $S^3$ is also a classical family of interesting $3$-manifolds, and each knot is either a torus knot, or a hyperbolic knot, or a satellite knot. We have the following corollary for knot complements.

\begin{corollary}
Let $M$ be the complement of a knot $K\subset S^3$, then $\pi_1(M)$ is LERF if and only if $K$ is either a torus knot or a hyperbolic knot.
\end{corollary}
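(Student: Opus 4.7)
The plan is to deduce this corollary directly from Theorem \ref{dim3} combined with the classical trichotomy for knots in $S^3$. First I would note that for any nontrivial knot $K \subset S^3$, the complement $M = S^3 \setminus \mathrm{int}(N(K))$ is a compact orientable irreducible $3$-manifold whose boundary is a single torus, so Theorem \ref{dim3} applies: $\pi_1(M)$ is LERF if and only if $M$ supports one of Thurston's eight geometries. (The unknot can be treated separately; its complement is a solid torus, hence Seifert fibered and with LERF fundamental group, and the unknot is commonly regarded as a trivial torus knot.)

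Next I would invoke the classification (due to the JSJ decomposition together with geometrization) that partitions knots in $S^3$ into three disjoint classes: torus knots, hyperbolic knots, and satellite knots. For torus knots, the complement admits a Seifert fibered structure (it is a Seifert fibered space over a disk with two exceptional fibers), so it is geometric, and by Theorem \ref{dim3}, $\pi_1(M)$ is LERF. For hyperbolic knots, the complement is by definition a finite volume hyperbolic $3$-manifold, so it is geometric and LERFness follows again from Theorem \ref{dim3} (via the work of Wise).

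For the remaining case, I would argue that satellite knot complements have nontrivial geometric decomposition. By definition, a satellite knot $K$ sits inside a solid torus $V$ knotted in $S^3$ as a nontrivial pattern, and the boundary of $V$ gives an incompressible, non-boundary-parallel torus $T$ in the complement $M$ that is not isotopic into $\partial M$. Thus $T$ is an essential torus contributing to the JSJ (and hence geometric) decomposition of $M$, so $M$ is non-geometric. By Theorem \ref{dim3}, $\pi_1(M)$ is not LERF.

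The conjunction of these three cases yields exactly the equivalence claimed. The potential obstacle, though a mild one, is verifying that the essential torus in the satellite case is truly non-peripheral and non-trivial so that the geometric decomposition is nontrivial; this is standard and follows from the definition of a satellite knot (the pattern is neither a core of $V$ nor contained in a ball), so no real difficulty arises, and the corollary reduces to a routine bookkeeping of the three knot types against Theorem \ref{dim3}.
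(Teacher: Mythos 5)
Your argument is correct and is exactly the route the paper intends: the corollary is stated as an immediate consequence of Theorem \ref{dim3} together with the torus/hyperbolic/satellite trichotomy, with satellite complements being toroidal (hence non-geometric, since the only geometric knot exteriors are the Seifert fibered torus-knot exteriors and the hyperbolic ones) and torus/hyperbolic complements being geometric. The only point worth tightening is that an essential torus alone does not rule out a geometric (Seifert fibered) structure in general; one should invoke that Seifert fibered knot exteriors are precisely torus knot exteriors, which are atoroidal.
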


\section{Union of two hyperbolic $3$-manifolds along a circle}\label{sectionsingular}

In this section, we will give the proof of Theorem \ref{singular}. The proof is very similar to the proof of Theorem \ref{dim3}. For some lemmas and propositions in this section, we will only give a sketch of the proof, and point out necessary modifications of the corresponding proofs in Section \ref{sectiondim3}.

In the proof of nonLERFness of $\pi_1(M_1\cup_{S^1}M_2)$, we actually only use machineary on hyperbolic $3$-manifolds for $M_1$  (the crucial ingredient is the virtual retract property of its geometrically finite subgroups), and do not have much requirement for $M_2$. So we will have some more general results on nonLERFness of $\mathbb{Z}$-amalgamated groups in Section \ref{general}.

\subsection{NonLERFness of $\pi_1(M_1\cup_{S^1}M_2)$ for hyperbolic $3$-manifolds $M_1$ and $M_2$}

Suppose that $M_1$ and $M_2$ are two finite volume hyperbolic $3$-manifolds (possibly with cusps), and $i_k:S^1\to M_k, k=1,2$ be two essential circles. Here we can assume that both $i_k$ are embeddings into $\text{int}(M_k)$, and denote the image of $i_k$ by $\gamma_k$. It is possible that the element in $\pi_1(M_k)$ corresponding to $\gamma_k$ is a parabolic element or a nonprimitive element. However, for most of the time, the readers can think $\gamma_k$ as a simple closed geodesic in $M_k$.

Let $X=M_1\cup_{\gamma}M_2$ be the space obtained by identifying $\gamma_1$ and $\gamma_2$ by a homeomorphism, then we need to show that $\pi_1(X)$ is not separable. For a standard graph of space, the edge space should be $S^1\times I$. Here we directly paste $M_1$ and $M_2$ together along the circles, which makes the picture simpler. We also give orientations on $\gamma_1$ and $\gamma_2$ such that the pasting preserves orientations on these two circles.

For any point in $X$, either it has a neighborhood homeomorphic to $B^3$ (the open unit ball in $\mathbb{R}^3$) or $B^3_+$ (the points in $B^3$ with non-negative $z$-coordinate), or it has a neighborhood homeomorphic to a union of two $B^3$s along $I_z=B^3\cap (z-\text{axis})$, i.e. $B^3\cup_{I_z}B^3$.

We first give a name for spaces locally look like $B^3$, $B^3_+$ or $B^3\cup_{I}B^3$.

\begin{definition}
A compact Hausdorff space $X$ is called a {\it singular $3$-manifold} if for any point $x\in X$, either it has a neighborhood homeomorphic to $B^3$ or $B^3_+$, or it has a neighborhood homeomorphic to $B^3\cup_{I_z} B^3$ with $x\in I_z$. We call points in the first class {\it regular points}, and call points in the second class {\it singular points}.
\end{definition}

We can think a singular $3$-manifold $X$ as a union of finitely many $3$-manifolds along disjoint simple closed curves, and we call each of these $3$-manifolds a $3$-manifold piece of $X$.

In the proof of Theorem \ref{dim3}, the concept of finite semicover played an important role, so we need to define a corresponding concept for singular $3$-manifolds. Here the set of singular points in singular $3$-manifolds correspond to the set of decomposition tori in $3$-manifolds.

\begin{definition}\label{singularsemicover}
Let $Y,Z$ be two singular $3$-manifolds, a map $i:Y\to Z$ is called a {\it singular finite semicover} if for any point $y\in Y$, one of the following case holds.
\begin{enumerate}
\item $i$ maps a neighborhood of $y$ to a neighborhood of $i(y)$ by homeomorphism.
\item $y$ is a regular point and $i(y)$ is a singular point, such that $i$ maps a $B^3$ neighborhood of $y$ to one of the $B^3$ in a $B^3\cup_{I_z} B^3$ neighborhood of $i(y)$ by homeomorphism.
\end{enumerate}
\end{definition}

Under a singular finite semicover, all singular points are mapped to singular points, and all regular points not lying on a finite union of simple closed curves are mapped to regular points. It maps each $3$-manifold piece of $Y$ to a $3$-manifold piece of $Z$ by a finite cover.

It is easy to see that a singular finite semicover $i:Y\to Z$ induces an injective homomorphism on fundamental group. The author also believes that a singular finite semicover gives a separable subgroup $\pi_1(Y)<\pi_1(Z)$, but we do not need this result here.

The following lemma corresponds to Lemma \ref{2V1E}.

\begin{lemma}\label{singular2V1E}
Let $X=M_1\cup_{\gamma}M_2$ be a union of two finite volume hyperbolic $3$-manifolds along an essential circle, there there exists a singular $3$-manifold $Y=N_1\cup_{c}N_2$ such that the following hold.
\begin{enumerate}
\item $Y$ is a union of two hyperbolic $3$-manifolds $N_1$ and $N_2$, where $N_k$ is a finite cover of $M_k$ ($k=1,2$), and the set of singular points is one oriented circle.
\item Each $N_k$ is a fibered $3$-manifold with a fixed fibered surface $S_k$, such that the algebraic intersection number $[S_k]\cap [c]=1$, for both $k=1,2$.
\item $Y$ is a singular finite semicover of $X$, and $\pi_1(Y)$ is a subgroup of $\pi_1(X)$.
\end{enumerate}
\end{lemma}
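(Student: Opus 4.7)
My plan is to construct, for each $k\in\{1,2\}$, a finite cover $N_k\to M_k$ in which $\gamma_k$ lifts to an embedded loop $\tilde{\gamma}_k\subset N_k$, the manifold $N_k$ fibers over $S^1$, and a connected fiber $S_k$ satisfies $[S_k]\cdot[\tilde{\gamma}_k]=1$. Once both are built, $Y=N_1\cup_c N_2$ is formed by identifying $\tilde{\gamma}_1$ with $\tilde{\gamma}_2$ via the homeomorphism induced by the identification $\gamma_1\simeq\gamma_2$ in $X$. Conditions $(1)$ and $(2)$ will then hold by construction. For $(3)$: on the singular circle $c$ the map $Y\to X$ is a local homeomorphism between $B^3\cup_{I_z}B^3$ neighborhoods (Case $1$ of Definition \ref{singularsemicover}), while at any other component of the preimage of $\gamma_k$ in $N_k$ the regular points of $Y$ map into a single $B^3$-side of a singular neighborhood of $X$ (Case $2$); so $Y\to X$ is a singular finite semicover and $\pi_1(Y)<\pi_1(X)$.

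\textbf{Step 1: virtual retract.} First I would apply Theorem \ref{virtualretract} to the (relatively) quasiconvex cyclic subgroup $\langle\gamma_k\rangle<\pi_1(M_k)$. This produces a finite cover $M_k^{(1)}\to M_k$ with $\gamma_k\in\pi_1(M_k^{(1)})$ together with a retraction $\rho_k\colon\pi_1(M_k^{(1)})\to\langle\gamma_k\rangle$. Identifying $\langle\gamma_k\rangle\cong\mathbb{Z}$ via $\gamma_k\mapsto 1$ yields a class $\phi_k\in H^1(M_k^{(1)};\mathbb{Z})$ with $\phi_k(\gamma_k)=1$; in particular $[\gamma_k]$ has infinite order in $H_1(M_k^{(1)};\mathbb{Z})$.

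\textbf{Steps 2--4: fibering, perturbation, cyclic cover.} Next I would invoke Agol's virtual fibering theorem (via the RFRS structure of $\pi_1(M_k^{(1)})$) to pass to a further finite cover $M_k^{(2)}\to M_k^{(1)}$ that fibers over $S^1$ while still containing $\gamma_k$ in $\pi_1$. Let $\alpha\in H^1(M_k^{(2)};\mathbb{Z})$ be a fibered class. Since fibered cones are open (Thurston), for any sufficiently large integer $N$ the class $N\alpha+\phi_k$ lies in the same fibered cone and evaluates to $N\alpha(\gamma_k)+1\neq 0$ on $\gamma_k$; replacing $\alpha$ by this perturbation, I may assume $\alpha(\gamma_k)=n_k\neq 0$. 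I would then form the $n_k$-fold cyclic cover $N_k\to M_k^{(2)}$ associated to $\alpha\bmod n_k$. Since $n_k\mid\alpha(\gamma_k)$, the loop $\gamma_k$ lifts to an embedded circle $\tilde{\gamma}_k\subset N_k$, and the primitive fibered class $\bar\alpha_k$ on $N_k$ induced by the factorization $M_k^{(2)}\to S^1\to S^1$ (second map of degree $n_k$) satisfies $\bar\alpha_k(\tilde{\gamma}_k)=1$; a connected fiber $S_k$ then gives $[S_k]\cdot[\tilde{\gamma}_k]=1$, as required.

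\textbf{Main obstacle.} The delicate point will be Step 2: simultaneously making $M_k^{(2)}$ fibered and keeping $\gamma_k$ in its fundamental group. A direct application of Agol's theorem to $\phi_k$ gives a fibered cover inside the RFRS tower of $\pi_1(M_k^{(1)})$, but deep enough levels of such a tower fail to contain $\gamma_k$ (the tower has trivial intersection). The fix should be to choose the cover compatibly with the decomposition $\pi_1(M_k^{(1)})\cong\langle\gamma_k\rangle\ltimes\ker\rho_k$ from Step 1, so that both fiberedness and the lift of $\gamma_k$ are preserved; alternatively, one can allow $\tilde{\gamma}_k$ to cover $\gamma_k$ with a common degree $d_1=d_2\geq 1$ on both sides of the gluing, which still yields a well-defined singular finite semicover.
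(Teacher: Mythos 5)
Your plan has the right ingredients (Agol's virtual fibering, the openness of the fibered cone, Theorem~\ref{virtualretract}, and a final cyclic cover to normalize the intersection number to $1$), and you correctly flag the weak point yourself. But the gap you identify in your ``main obstacle'' paragraph is real, and neither of your two proposed patches is developed enough to close it.

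Your Step~2 requires $\gamma_k$ to \emph{lift} to the fibered cover $M_k^{(2)}$. As you say, a fibered cover produced inside an RFRS tower need not contain $\gamma_k$, and patch~(a) (choosing the cover ``compatibly with the decomposition $\langle\gamma_k\rangle\ltimes\ker\rho_k$'') does not obviously work: Agol's theorem builds a specific tower of finite-index normal covers, and there is no reason that tower respects the splitting coming from your retraction. Patch~(b) is the right idea but leaves the key computation undone: if you work with elevations $\gamma_k'\subset M_k^{(2)}$ instead of lifts, you end up with arbitrary covering degrees $\deg(\gamma_k'\to\gamma_k)=b_k$ and intersection numbers $[S_k]\cap[\gamma_k']=a_k>0$ for $k=1,2$, and you must simultaneously (i) reduce both intersection numbers to $1$ and (ii) arrange $\deg(c_1\to\gamma_1)=\deg(c_2\to\gamma_2)$ so the two circles can be identified by a homeomorphism. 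The paper does exactly this, and the point is the precise choice of cyclic-cover multiplicities: $N_1$ is the $a_1b_2$-fold cyclic cover of $M_1''$ along $S_1$ and $N_2$ is the $a_2b_1$-fold cyclic cover of $M_2''$ along $S_2$. A short computation then shows $[S_k]\cap[c_k]=1$ on both sides and $\deg(c_1\to\gamma_1)=\deg(c_2\to\gamma_2)=b_1b_2$, so the gluing is possible. Your sketch never chooses these multiplicities, which is where the substance lies.

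Two smaller remarks. First, the paper's order of operations is cleaner: it applies Agol's virtual fibering (together with the virtual infinite Betti number theorem) \emph{first}, takes an elevation $\gamma_k'$, and only then uses Theorem~\ref{virtualretract} to promote $\gamma_k'$ to a non-null-homologous curve if needed; this avoids the lifting obstacle entirely rather than trying to engineer around it. Second, your use of the fibered-cone openness to perturb $\alpha$ so that $\alpha(\gamma_k)\neq 0$ is fine and is exactly what the paper does once non-null-homology of $\gamma_k'$ is in hand; that part of your argument is sound.
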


\begin{proof}
By Agol's virtual fibering theorem and virtual infinite betti number theorem (\cite{Ag3}), there exists a finite cover $M_1'$ of $M_1$ such that $M_1'$ is a fibered $3$-manifold and $b_1(M_1')>1$. Let $\gamma_1'\subset M_1'$ be one oriented elevation (one component of the preimage) of $\gamma_1\subset M_1$. If $\gamma_1'$ is nullhomologous in $M_1'$, we can use Theorem \ref{virtualretract} to find a further finite cover $M_1''$ such that $\gamma_1'$ lifts to a non null-homologous curve in $M_1''$.

Since the fibered cone is an open set in $H^1(M_1'';\mathbb{R})$, there exists a fibered surface $S_1$ in $M_1''$ which has positive intersection number with $\gamma_1'$. So we have $[S_1]\cap [\gamma_1']=a_1\in \mathbb{Z}_+$, and $\text{deg}(\gamma_1'\to \gamma_1)=b_1$.

By the same construction, we get a finite cover $M_2''\to M_2$ with a fibered surface $S_2$, such that for some oriented elevation $\gamma_2'$ of $\gamma_2$, $[S_2]\cap [\gamma_2']=a_2\in \mathbb{Z}_+$ and $\text{deg}(\gamma_2'\to \gamma_2)=b_2$.

Let $N_1$ be the $a_1b_2$-sheet cyclic cover of $M_1''$ along $S_1$, and $c_1$ be one elevation of $\gamma_1'$. Then $[S_1]\cap [c_1]=1$ and $\text{deg}(c_1\to \gamma_1)=b_1b_2$. Similarly, let $N_2$ be the $a_2b_1$-sheet cyclic cover of $M_2''$ along $S_2$, and $c_2$ be one elevation of $\gamma_1'$. Then $[S_2]\cap [c_2]=1$ and $\text{deg}(c_2\to \gamma_2)=b_1b_2$.

Since $c_1\to \gamma_1$ and $c_2\to \gamma_2$ have the same degree, we can identify $c_1$ and $c_2$ (as oriented curves) to get the desired singular finite semicover $Y=N_1\cup_c N_2$.
\end{proof}

\begin{remark}
Actually, we may get a result as strong as in Lemma \ref{2V1E}, i.e. $Y=N_1\cup_c N_2$ is an $S_1\vee S_2$ bundle over $S^1$. However, we did not state Lemma \ref{singular2V1E} in this way. One reason is that we need to homotopy the curve $c_k$ in $N_k$ to get this fibered structure. Moreover, the closed curve $c_k$ may not (virtually) be a closed orbit of the pseudo-Anosov suspension flow of a (virtual) fibered structure of $N_k$. So it is not a natural object, from the dynamical point of view. Nevertheless, $S_1\vee S_2$ is a homotopy fiber of $Y=N_1\cup_c N_2$, from homotopy point of view.
\end{remark}

For simplicity, we still use $X=M_1\cup_{\gamma}M_2$ to denote the singular $3$-manifold obtained in Lemma \ref{singular2V1E}. Then we have the following lemma corresponding to Lemma \ref{2V2E}.

\begin{lemma}\label{singular2V2E}
For the singular $3$-manifold $X=M_1\cup_{\gamma}M_2$ constructed in Lemma \ref{singular2V1E}, there exists a singular $3$-manifold $Y=N_1\cup_{c\cup c'}N_2$ such that the following hold.
\begin{enumerate}
\item $Y$ is a union of two hyperbolic $3$-manifolds $N_1$ and $N_2$, where each $N_k$ is a finite cover of $M_k$ ($k=1,2$), and the set of singular points consists of two oriented circles.
\item The homomorphism $H_1(c\cup c';\mathbb{Z})\to H_1(N_1;\mathbb{Z})$ induced by inclusion is injective.
\item For each $N_k\ (k=1,2)$, there exists a fibered surface $S_k'\subset N_k$ such that for the algebraic intersection number, $[S_k']\cap [c]=[S_k']\cap [c']=1$ holds.
\item $Y$ is a finite semicover of $X$, and $\pi_1(Y)$ is a subgroup of $\pi_1(X)$.
\end{enumerate}
\end{lemma}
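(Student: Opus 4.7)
The plan is to mirror Lemma \ref{2V2E}, with one key simplification: the singular set of $X$ is the single circle $\gamma$ rather than a torus, so in place of $\mathbb{Z}^2 * \mathbb{Z}^2$ I will use a free group of rank two generated by two conjugates of a primitive element of $\pi_1(\gamma_1) < \pi_1(M_1)$. Moreover, the homological retract condition (2) is demanded only of $N_1$, so much less work is needed on the $M_2$ side.

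On $M_1$, let $\delta \in \pi_1(M_1)$ be a primitive generator of $\pi_1(\gamma_1)$. I pick $h \in \pi_1(M_1)$ whose action on the boundary at infinity (or the Bowditch boundary in the parabolic case) does not stabilize the fixed-point set of $\delta$. By the usual ping-pong/combination theorem, for large enough $k$ the subgroup $F := \langle \delta,\, h^k \delta h^{-k}\rangle$ is free of rank two; I replace $h$ by $h^k$. Being free, $F$ is geometrically finite (certainly not a virtual fibered surface subgroup), so Theorem \ref{virtualretract} produces a finite cover $N_1 \to M_1$ with $F < \pi_1(N_1)$ and a retraction $\rho: \pi_1(N_1) \twoheadrightarrow F$. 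The retraction forces $\delta$ and $h\delta h^{-1}$ to be non-conjugate in $\pi_1(N_1)$ (their $\rho$-images are the two free generators of $F$, which are non-conjugate there), so they determine two distinct elevations $c = c_1$ and $c' = c_1'$ of $\gamma_1$ in $N_1$, each covering $\gamma_1$ with degree one (since each lies in $\pi_1(N_1)$ as written). Condition (2) follows by abelianizing: $\rho$ induces a retraction $H_1(N_1;\mathbb{Z}) \twoheadrightarrow H_1(F;\mathbb{Z}) \cong \mathbb{Z}[c] \oplus \mathbb{Z}[c']$, which exhibits $H_1(c\cup c';\mathbb{Z}) \hookrightarrow H_1(N_1;\mathbb{Z})$ as a split injection.

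For condition (3) on $N_1$, I pull the fibered surface $S_1 \subset M_1$ from Lemma \ref{singular2V1E} back through the cover $p:N_1 \to M_1$. Since both elevations have covering degree one and $[S_1]\cdot [\gamma_1]=1$, I get $\langle p^*[S_1],[c]\rangle = \langle p^*[S_1],[c']\rangle = 1$. The evaluation $1$ forces $p^*[S_1]$ to be primitive in $H^1(N_1;\mathbb{Z})$, and being pulled back from the fibration $M_1 \to S^1$ it is a fibered class; hence its representing preimage $p^{-1}(S_1)$ is connected and is the desired $S_1'$. For $N_2$ the condition (2) is not required, so I repeat the combination-plus-virtual-retract argument (or, more economically, just invoke LERFness of $\pi_1(M_2)$) to produce a finite cover $N_2 \to M_2$ in which $\gamma_2$ admits two elevations $c_2, c_2'$ of covering degree one; the same pullback produces $S_2'$. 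Since all four elevations cover $\gamma$ with degree one, gluing $c_1 \sim c_2 =: c$ and $c_1' \sim c_2' =: c'$ via the cover maps yields the singular $3$-manifold $Y = N_1 \cup_{c\cup c'} N_2$, and the induced map $Y \to X$ is a singular finite semicover in the sense of Definition \ref{singularsemicover}, establishing (1) and (4).

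The main obstacle I anticipate is condition (3), producing a fibered class with intersection exactly $1$ on both elevations. If one only knew that $\langle [c],[c']\rangle$ sat as a retract inside $H_1(N_1)$ without realizing $c, c'$ as honest degree-one elevations (as would happen by a direct analogy with Lemma \ref{2V2E}), forcing a \emph{primitive} fibered class to evaluate to $1$ on both curves would require a delicate Thurston-norm perturbation, and there is no a priori reason the fibered cone reaches those evaluations. My plan circumvents this by having the combination-theorem elements $\delta$ and $h\delta h^{-1}$ lie in $\pi_1(N_1)$ from the start, so that the pulled-back fibered class from $M_1$ is automatically primitive with intersection one on both $c$ and $c'$.
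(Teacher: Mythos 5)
Your proposal follows essentially the same route as the paper: combine $\pi_1(\gamma_1)$ with a conjugate via ping-pong to get a $\mathbb{Z}*\mathbb{Z}$ subgroup, apply Theorem \ref{virtualretract} to obtain a finite cover $N_1$ containing it as a retract, deduce condition (2) by abelianizing the retraction, obtain two distinct degree-one elevations $c_1,c_1'$, elevate the fibered surface $S_1$ from Lemma \ref{singular2V1E} (the intersection number $1$ forcing the elevated class to be primitive and the preimage connected), run a simpler version for $M_2$, and glue. Your variation of getting distinctness of the two elevations from non-conjugacy of $\delta$ and $h\delta h^{-1}$ under the retraction, rather than passing to a further cover via LERF as in the analogous Lemma \ref{2V2E}, is fine.

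One justification, however, is wrong as stated: ``Being free, $F$ is geometrically finite (certainly not a virtual fibered surface subgroup).'' When $M_1$ is cusped, the fiber is a punctured surface whose fundamental group is free — e.g.\ the fiber subgroup of the figure-eight knot complement is free of rank two — so a rank-two free subgroup can perfectly well be a (virtual) fibered surface subgroup, and freeness alone does not place $F$ in the geometrically finite case of the dichotomy. The conclusion you need is still true for your $F$, but for a different reason: the ping-pong/Klein--Maskit combination of the two elementary subgroups produces a geometrically finite group (its limit set is a Cantor set, in particular not all of $S^2_\infty$, so $F$ cannot be a virtual fiber). Replace the parenthetical by this observation and the argument is complete and matches the paper's.
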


\begin{proof}
Let $\gamma_i$ be the oriented copy of $\gamma$ in $M_i$.

By a similar argument as in the proof of Lemma \ref{2V2E}, and using the virtual retract property of a $\mathbb{Z}*\mathbb{Z}=\langle \pi_1(\gamma),g^n\pi_1(\gamma)g^{-n}\rangle$ subgroup in $\pi_1(M_1)$, we can find a finite cover $N_1$ of $M_1$ and two distinct homeomorphic liftings $c_1$ and $c_1'$ of $\gamma_1\subset M_1$, such that $H_1(c_1\cup c_1';\mathbb{Z})\to H_1(N_1;\mathbb{Z})$ is injective.

In the conclusion of Lemma \ref{singular2V1E}, we fixed a fibered surface $S_1$ in $M_1$ whose algebraic intersection number with $\gamma_1$ is $1$. For an elevated fibered surface $S_1'\subset N_1$, the algebraic intersection numbers of $S_1'$ with $c_1$ and $c_1'$ are both equal to $1$.

By doing a similar construction for $M_2$ (actually a simpler construction works since we do not require condition (2) for $N_2$), we get a finite cover $N_2$ of $M_2$, with two homeomorphic liftings $c_2$ and $c_2'$ of $\gamma_2$, and a fibered surface $S_2'$ of $N_2$ with $[S_2']\cap[c_2]=[S_2']\cap[c_2']=1$.

Then we paste $N_1$ and $N_2$ together by identifying $c_1$ with $c_2$ (denoted by $c$) and identifying $c_1'$ and $c_2'$ (denoted by $c'$) to get the desired singular semicover $Y$.
\end{proof}

For singular $3$-manifolds, we need a definition in this singular world that corresponds to immersed surfaces in $3$-manifolds.

We first define singular surfaces, which plays the same role as surfaces in $3$-manifolds.

\begin{definition}
A compact Hausdorff space $K$ is called a {\it singular surface} if for any point $k\in K$, either it has a neighborhood homeomorphic to $B^2$ or $B^2_+$, or it has a neighborhood homeomorphic to $B^2\vee B^2$, with $k$ lying in the intersection of two discs. We call the points in the first class {\it regular points}, and points in the second class {\it singular points}.
\end{definition}

We can think a singular surface $K$ as a union of finitely many compact surfaces, pasting along finitely many points in the interior. We call each of these surfaces a surface piece of $K$.

Now we define singular immersions from singular surfaces to singular $3$-manifolds.

\begin{definition}\label{singular_immersion}
Let $i:K\to X$ be a map from a singular surface to a singular $3$-manifold. We say that $i$ is a {\it singular immersion} if the following conditions hold.
\begin{enumerate}
\item $i$ maps the singular set of $K$ to the singular set of $X$.
\item The restriction of $i$ on each surface piece of $K$ is a proper immersion from the surface to a $3$-manifold piece of $X$.
\item For any singular point $k\in K$, there exist a $B^2\vee B^2$ neighborhood of $k$ and a $B^3\cup_{I_z} B^3$ neighborhood of $i(k)$, such that $i$ maps two $B^2$s to distinct $B^3$s in $B^3\cup_{I_z} B^3$, and each $B^2$ is mapped to the intersection of $B^3$ with the $xy$-plane by homeomorphism.
\end{enumerate}
\end{definition}

Note that Definition \ref{singular_immersion} is not a good candidate for "proper singular immersion". Under Definition \ref{singular_immersion}, there can be some regular point of $K$ that is mapped to a singular point of $X$. If we consider the corresponding manifold picture, this corresponds to the case that a boundary component of a surface is mapped to a JSJ torus of a $3$-manifold, which is not proper. In the following proposition, we construct a singular immersion that get rid of this picture in the algebraic topology sense, and the readers may compare it with Proposition \ref{surface}.

\begin{proposition}\label{singularsurface}
For the singular $3$-manifold $Y=N_1\cup_{c\cup c'}N_2$ and fibered surfaces $S_k'\subset N_k$ constructed in Lemma \ref{singular2V2E}, there exists a connected singular surface $K$ and a $\pi_1$-injective singular immersion $i:K\looparrowright Y$ such that the following hold.
\begin{enumerate}
\item $K$ is a union of oriented connected subsurfaces as $K=(\Sigma_{1,1}\cup\Sigma_{1,2})\cup (\cup_{k=1}^{2n}\Sigma_{2,k})$, with $i(\Sigma_{1,j})\subset N_1$ and $i(\Sigma_{2,k})\subset N_2$.
\item There are $4n$ singular points in $K$. Each singular point lies in $\Sigma_{1,j}\cap \Sigma_{2,k}$ for some $j\in \{1,2\},k\in\{1,2,\cdots,2n\}$, and each $\Sigma_{2,k}$ contains exactly two singular points.
\item The restriction of $i$ on $\Sigma_{1,j}$s and $\Sigma_{2,k}$s are all embeddings, and their images are fibered surfaces of $N_1$ and $N_2$ respectively.
\item Each $\Sigma_{2,k}$ is a copy of $S_2'$ in $N_2$, and the two singular points in $\Sigma_{2,k}$ are mapped to the intersection of $\Sigma_{2,k}$ with $c$ and $c'$ respectively.
\item $\Sigma_{1,1}\cap \Sigma_{2,1}$ consists of two singular points $p$ and $p'$, with $i(p)\in c$ and $i(p')\in c'$.
\item The algebraic intersection number $[\Sigma_{1,1}]\cap [c_1]=A$ and $[\Sigma_{1,1}]\cap [c_1']=B$, with $A\ne B$; while the algebraic intersection number $[\Sigma_{1,2}]\cap [c_1]=2n-A$ and $[\Sigma_{1,2}]\cap [c_1']=2n-B$. Here $c_1$ and $c_1'$ are the oriented copies of $c$ and $c'$ in $N_1$ respectively.
\item The set $\{\text{singular\ points\ in\ }\Sigma_{1,1}\}\cap  i^{-1}(c)$ has cardinality $A$. Suppose this set is $\{a_1,\cdots,a_A\}$, then $\Sigma_{1,1}$ has positive local intersection number with $c_1$ at each $a_l$. The same statement holds for $\{\text{singular\ points\ in\ }\Sigma_{1,1}\}\cap i^{-1}(c')$ (with $A$ replaced by $B$), $\{\text{singular\ points\ in\ }\Sigma_{1,2}\}\cap i^{-1}(c)$ and $\{\text{singular\ points\ in\ }\Sigma_{1,2}\}\cap i^{-1}(c')$ (with $A$ replaced by $2n-A$ and $2n-B$, respectively).
\item For each $l\in\{1,\cdots, A\}$, take the embedded oriented subarc of $c$ from $i(a_1)$ to $i(a_l)$. Then slightly move it along the positive direction of $c_1$, to get an oriented arc $\rho_l$ with end points away from $\Sigma_{1,1}$. Then the algebraic intersection number between $\Sigma_{1,1}$ and $\rho_l$ is equal to $l-1$. Similar statements also hold for $\{\text{singular\ points\ in\ }\Sigma_{1,1}\}\cap i^{-1}(c')$, $\{\text{singular\ points\ in\ }\Sigma_{1,2}\}\cap i^{-1}(c)$ and $\{\text{singular\ points\ in\ }\Sigma_{1,2}\}\cap i^{-1}(c')$.
\end{enumerate}
\end{proposition}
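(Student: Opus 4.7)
The plan is to imitate the construction of Proposition \ref{surface}, replacing the gluing tori there by the gluing circles $c, c'$. Let $\alpha \in H^1(N_1;\mathbb{Z})$ be the class dual to the fibered surface $S_1'$, so by condition~(3) of Lemma \ref{singular2V2E} one has $\alpha([c_1]) = \alpha([c_1']) = 1$. Using the injectivity $H_1(c \cup c';\mathbb{Z}) \hookrightarrow H_1(N_1;\mathbb{Z})$ from condition~(2) of that lemma, I would saturate the image to a rank-$2$ direct summand, define a homomorphism from the summand to $\mathbb{Z}$ sending $[c_1] \mapsto l$ and $[c_1'] \mapsto 0$ for some $l \in \mathbb{Z}_+$, and extend via the retraction to produce a class $\beta \in H^1(N_1;\mathbb{Z})$.

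By openness of the fibered cone, for all sufficiently large $n$ with $\gcd(n,l)=1$, both $\alpha_{1,1} = n\alpha + \beta$ and $\alpha_{1,2} = n\alpha - \beta$ are primitive fibered integer classes (primitivity because their $[c_1]$- and $[c_1']$-values are coprime). Let $\Sigma_{1,1}, \Sigma_{1,2} \subset N_1$ be the corresponding connected fibered surfaces; one computes $\alpha_{1,1}([c_1]) = n+l =: A$, $\alpha_{1,1}([c_1']) = n =: B$, $\alpha_{1,2}([c_1]) = n-l = 2n-A$, $\alpha_{1,2}([c_1']) = n = 2n-B$, with $A \neq B$. In $N_2$, take $2n$ parallel copies of $S_2'$ as $\Sigma_{2,1}, \ldots, \Sigma_{2,2n}$, each meeting $c_2, c_2'$ positively transversely in a single point. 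This produces the topological data for the surface pieces and the bulk of conditions (1)--(4) and (6).

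To assemble these into a singular immersion $i: K \looparrowright Y$ realizing the positive-transversality condition~(7), I would homotope $c_1, c_1'$ within $N_1$ so that each curve meets every $\Sigma_{1,j}$ in exactly the algebraic count many points, all positive. This is possible because $\alpha, \alpha_{1,1}, \alpha_{1,2}$ lie in a common fibered cone and hence (by Fried) share a pseudo-Anosov suspension flow; the free homotopy class of $c_1$ then admits a representative whose projection to the base $S^1$ of each fibration is a monotone covering map of positive degree equal to the algebraic intersection. The resulting intersection points are declared to be the singular points of the surface pieces. The pairing along $c$ (resp.\ $c'$) of the $2n$ singular points from $\Sigma_{1,1} \cup \Sigma_{1,2}$ with the $2n$ from $\bigcup_k \Sigma_{2,k}$ is a free combinatorial choice; I would pair a singular point of $\Sigma_{1,1}$ on $c$ with that of $\Sigma_{2,1}$ on $c$ (and similarly on $c'$) to force condition~(5), and then slide individual $\Sigma_{2,k}$'s along the suspension flow of $N_2$ (they remain fibered surfaces isotopic to $S_2'$) to realize the cyclic-order condition~(8). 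Finally, $\pi_1$-injectivity of $i$ follows by the same innermost-arc argument as at the end of the proof of Proposition \ref{surface}: a disk witnessing a null-homotopy is placed in general position with respect to the $1$-dimensional locus $c \cup c'$, and innermost boundary arcs are pushed into some $\Sigma_{1,j}$ or $\Sigma_{2,k}$, strictly reducing the number of boundary intersections with $c \cup c'$ until a contradiction is reached.

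The hardest part is arranging positive geometric transversality in the third paragraph. In the torus case of Proposition \ref{surface}, efficient intersection of fibered surfaces with incompressible tori in parallel circles is automatic, but here a curve might a priori have canceling positive and negative intersections with a $2$-dimensional fibered surface in a $3$-manifold. The resolution is that all fibrations in a single fibered cone share a common pseudo-Anosov flow (Fried), giving a coherent notion of monotone transversality; once this is in hand, verifying the combinatorial conditions (5) and (8) and running the innermost-arc argument are relatively routine adaptations of the proof of Proposition \ref{surface}.
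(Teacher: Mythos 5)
There is a genuine gap, and it sits exactly at the step you flag as the hardest one. Your construction requires homotoping $c_1$ (and $c_1'$) so that each meets \emph{both} fibered surfaces $\Sigma_{1,1}$ and $\Sigma_{1,2}$ transversally, with all intersections positive and with geometric intersection number equal to the algebraic one. The justification you offer --- that $\alpha_{1,1}$ and $\alpha_{1,2}$ lie in one fibered cone and hence share a pseudo-Anosov suspension flow by Fried --- does not yield this. Fried's theorem makes the fibers of both fibrations cross-sections of a common flow, but it says nothing about an arbitrary essential circle being freely homotopic to a curve positively transverse to both fiber foliations simultaneously; that conclusion is automatic only for curves homotopic to closed orbits of the flow (or otherwise positively transverse curves), and the circles $c_1,c_1'$ produced by Lemma \ref{singular2V2E} need not be homotopic, even virtually, to closed orbits. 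For a \emph{single} fibration your claim is fine (homotope the composition $S^1\to N_1\to S^1$ to a monotone covering and lift via the homotopy lifting property), but for two fibrations the relevant map $N_1\to S^1\times S^1$ is not a fibration and there is no lifting argument; the paper explicitly remarks on precisely this point, noting that one can arrange efficiency with respect to one of $\Sigma_{1,1},\Sigma_{1,2}$ but perhaps not both at once. So the positive-transversality step is unproved, and with it conditions (7), (8), and your version of the $\pi_1$-injectivity argument.

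The paper's proof is built to avoid this issue rather than solve it: the curves $c_1,c_1'$ are left alone, extra geometric intersections of either sign are tolerated, and the statement is deliberately phrased with \emph{algebraic} intersection numbers. Concretely, if $\Sigma_{1,1}\cap c_1$ consists of $A+2m$ points, one orders them along $c_1$ starting from a positive intersection point and observes that the running algebraic count $[\Sigma_{1,1}]\cap[\rho_l']$ changes by $\pm1$ at each crossing and runs from $0$ to $A-1$; an intermediate-value type selection then produces $A$ \emph{positive} crossings realizing the values required by condition (8), and only these chosen points (and the analogous ones for $c_1'$ and for $\Sigma_{1,2}$) are declared singular points of $K$. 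The leftover intersections are simply not singular points of the immersion, which is why (7)--(8) are stated as an algebraic substitute for properness and why the counting in Proposition \ref{singularnonseparable} is more delicate than in Proposition \ref{nonseparable}. If you want to salvage your route, you would need to either prove the simultaneous monotonicity claim (which is not known and is likely false in general) or switch to the paper's selection-of-positive-crossings device.
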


This proposition looks more complicated than Proposition \ref{surface}, and we give some remarks here.

\begin{remark}
The conditions (1)-(6) in Proposition \ref{singularsurface} correspond to the conditions in Proposition \ref{surface}, and conditions (7) and (8) in Proposition \ref{singularsurface} correspond to the "properness" of this singular immersion. Although we do not assume $$i^{-1}\{\text{singular\ points\ in\ } Y\}=\{\text{singular\ points\ in\ }K\},$$ conditions (6) and (7) imply that the total algebraic intersection number between $\Sigma_{1,j}$ and $c_1$ at the points in $\big(i^{-1}(c)\cap \Sigma_{1,j}\big)\setminus \{\text{singular\ points\ in\ }\Sigma_{1,j}\}$ is zero, and it also holds for $c'$. So it is a weak and algebraic version of $i^{-1}(c\cup c')\cap \Sigma_{1,j}=\{\text{singular\ points\ in\ }\Sigma_{1,j}\}$.

Here we use algebraic intersection number instead of geometric intersection number (or number of components in the intersection) as in Proposition \ref{surface}. For a fibered surface and a closed orbit of the suspension flow (or a boundary component of the $3$-manifold), the algebraic intersection number is always equal to the geometric intersection number (or number of components in the intersection). However, the circles $c_1$ and $c_1'$ in $N_1$ may not be (virtually) closed orbits of the suspension flow, even up to homotopy. Although we can homotopy $c_1$ and $c_1'$ such that their algebraic intersection numbers with one fibered surface are equal to corresponding geometric intersection numbers, but there are two fibered surfaces $\Sigma_{1,1}$ and $\Sigma_{1,2}$ in $N_1$, and we may not be able to do it simultaneously for both $\Sigma_{1,1}$ and $\Sigma_{1,2}$.
\end{remark}

\begin{proof}
By the same argument as in the proof of Proposition \ref{surface}, we can construct two fibered surfaces $\Sigma_{1,1}$ and $\Sigma_{1,2}$ in $N_1$ such that condition (6) holds. Take $2n$ copies of $S_2'$ in $N_2$, and denote them by $\Sigma_{2,1},\cdots,\Sigma_{2,2n}$.

First suppose we choose any $A$, $B$, $2n-A$, $2n-B$ points in $\Sigma_{1,1}\cap c_1$, $\Sigma_{1,1}\cap c_1'$, $\Sigma_{1,2}\cap c_1$, $\Sigma_{1,2}\cap c_1'$ respectively, such that the corresponding surfaces and curves have positive local intersection numbers at these points. If we identify these points with $(\cup_{k=1}^{2n}\Sigma_{2,k})\cap(c_2\cup c_2')$ in an arbitrary way, we get a singular surface $K$ and a singular immersion satisfying conditions (1)-(4), (6) and (7).

So we need to choose these points carefully so that condition (8) holds, and then do the correct pasting such that condition (5) holds.

The choice of these four families of points follows the same process, so we only consider $\Sigma_{1,1}\cap c_1$. Although the algebraic intersection number between $\Sigma_{1,1}$ and $c_1$ is $A$, there might be more geometric intersection points. So we assume that there are $A+2m$ intersection points in $\Sigma_{1,1}\cap c_1$. Take any positive intersection point $a_1'$ in $\Sigma_{1,1}\cap c_1$. By following the orientation of $c_1$, we denote the other points of $\Sigma_{1,1}\cap c_1$ by $a_2',\cdots,a_{A+2m}'$. For any $l\in \{1,\cdots,A+2m\}$, take the embedded oriented subarc in $c_1$ from $a_1'$ to $a_l'$, then move it slightly along the positive direction of $c_1$, and denote it by $\rho_l'$. Whenever we move from $a_l'$ to $a_{l+1}'$, the algebraic intersection number $[\Sigma_{1,1}]\cap [\rho_l']$ differs from $[\Sigma_{1,1}]\cap [\rho_{l+1}']$ by $1$ or $-1$, depending on whether $\Sigma_{1,1}$ intersects with $c_1$ positively or negatively at $a_{l+1}'$. Since $[\Sigma_{1,1}]\cap [\rho_1']=0$ and $[\Sigma_{1,1}]\cap [\rho_{A+2m}']=A-1$, it is easy to find $A$ points in $\{a_1',a_2',\cdots,a_{A+2m}'\}$ (with $a_1=a_1'$), such that they are all positive intersection points and satisfy condition (8).

Then we can paste the $2n$ points in $(\Sigma_{1,1}\cup \Sigma_{1,2})\cap c_1$ (and $(\Sigma_{1,1}\cup \Sigma_{1,2})\cap c_1'$) chosen above with the $2n$ points in $(\cup_{k=1}^{2n}\Sigma_{2,k})\cap c_2$ (and $(\cup_{k=1}^{2n}\Sigma_{2,k})\cap c_2'$), to get a connected singular immersed surface $i:K\looparrowright Y$. By doing isotopy of $\Sigma_{2,1}$ in $N_2$, we can make sure the pasting satisfies condition (5).

The $\pi_1$-injectivity of $i:K\looparrowright Y$ follows from the same $\pi_1$-injectivity argument in Lemma \ref{surface}. Note that we do need condition (8) here.
\end{proof}

Then we can show that the above $\pi_1$-injective singular immersion gives a nonseparable subgroup in $\pi_1(Y)$. This proof is similar to the proof of Proposition \ref{nonseparable}.

\begin{proposition}\label{singularnonseparable}
For the singular immersion $i:K\looparrowright Y$ constructed in Proposition \ref{singularsurface}, $i_*(\pi_1(K))<\pi_1(Y)$ is a nonseparable subgroup.
\end{proposition}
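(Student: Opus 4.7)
The plan is to mirror the proof of Proposition \ref{nonseparable}, with the singular structure of $Y$ and $K$ handled by the algebraic properness encoded in conditions (7) and (8) of Proposition \ref{singularsurface}. Suppose for contradiction that $H = i_*(\pi_1(K))$ is separable in $\pi_1(Y)$. First I would analyze the cover $\tilde Y \to Y$ corresponding to $H$. Since each surface piece $\Sigma_{1,j}, \Sigma_{2,k}$ is embedded fibered in its $3$-manifold piece, the preimage of each $3$-manifold piece in $\tilde Y$ is homeomorphic to a product $\Sigma \times \mathbb R$, and the arcs of $K$ crossing the singular circles (recorded by conditions (4) and (5)) glue these pieces compatibly, so $K$ lifts to an embedded singular subsurface of $\tilde Y$. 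Applying a Scott-type separability argument to the natural graph-of-groups decomposition of $\pi_1(Y)$ then produces a finite intermediate cover $\hat Y \to Y$ into which $K$ embeds.

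Next, I would replace $\hat Y$ by a further finite cyclic cover $\bar Y \to \hat Y$ in which $K$ still embeds but now meets each singular circle of $\bar Y$ in exactly one point. Conditions (7) and (8) give an algebraic version of properness of $i$ along the singular circles, which is exactly what is needed for the embedded $K \subset \hat Y$ to be Poincaré--Lefschetz dual to a well-defined class $\sigma \in H^1(\hat Y; \mathbb Z)$. Letting $L$ be the least common multiple over all singular circles $\hat c_s \subset \hat Y$ of $|K \cap \hat c_s|$, the $L$-fold cyclic cover of $\hat Y$ determined by $\sigma \bmod L$ gives such a $\bar Y$.

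Finally, let $\bar N_1, \bar N_2$ be the $3$-manifold pieces of $\bar Y$ containing $\Sigma_{1,1}$ and $\Sigma_{2,1}$ respectively, joined along singular circles $\bar c, \bar c'$ lying over $c, c'$. Since $\Sigma_{1,1}$ is embedded fibered in both $N_1$ and $\bar N_1$, the cover $\bar N_1 \to N_1$ is cyclic along $\Sigma_{1,1}$; condition (6) of Proposition \ref{singularsurface} gives $[\Sigma_{1,1}]\cap [c_1]=A$ and $[\Sigma_{1,1}]\cap [c_1']=B$, so
\[
B \cdot \deg(\bar c \to c) \;=\; \deg(\bar N_1 \to N_1) \;=\; A \cdot \deg(\bar c' \to c').
\]
The same reasoning applied to $\bar N_2 \to N_2$, cyclic along $\Sigma_{2,1}$ with $[\Sigma_{2,1}]\cap [c_2]=[\Sigma_{2,1}]\cap [c_2']=1$, yields
\[
\deg(\bar c \to c) \;=\; \deg(\bar N_2 \to N_2) \;=\; \deg(\bar c' \to c').
\]
Combining forces $A = B$, contradicting condition (6).

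The main obstacle, I expect, will be the middle step. On a singular $3$-manifold there is no automatic Poincaré--Lefschetz duality for a singularly immersed surface: $K$ may cross a singular circle of $\hat Y$ in non-singular points as well, so one must use conditions (7) and (8) carefully to show that such extra crossings cancel in homology and that $K$ does determine a bona fide cohomology class on $\hat Y$. Once this class is in hand, the cyclic cover construction and the degree comparison are a direct translation of Proposition \ref{nonseparable}.
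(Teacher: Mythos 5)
Your proposal is correct and follows the paper's argument essentially step for step: lift $K$ to an embedding in a finite cover $\hat Y$ via Scott's criterion, use conditions (7)--(8) of Proposition \ref{singularsurface} together with a Mayer--Vietoris argument to show the embedded $K$ is dual to a class $\kappa\in H^1(\hat Y;\mathbb Z)$, pass to a finite cyclic cover dual to $\kappa$, and derive $A=B$ from the two degree relations along the singular circles. You also correctly flagged where the real work lies: the paper spends most of its effort on the counting argument (using condition (8) and the cyclic-cover structure $\hat N_1\to N_1$) showing that exactly $A/\gcd(A,D)$ of the singular points $a_1,\dots,a_A$ lie over any given component of $p^{-1}(c)$, which is precisely the ``algebraic properness'' needed to glue the local dual classes into a global $\kappa$.
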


\begin{proof}
Suppose that $i_*(\pi_1(K))<\pi_1(Y)$ is separable, then we will get a contradiction.

Since each surface piece of $K$ is mapped to a fibered surface in the corresponding $3$-manifold piece of $Y$, the covering space $\tilde{Y}$ of $Y$ corresponding to $\pi_1(K)$ is homeomorphic to a union of $\Sigma_{1,j}\times \mathbb{R}$ (with $j=1,2$) and $\Sigma_{2,k}\times \mathbb{R}$ (with $k=1,\cdots,2n$), by pasting along the preimage of $c_i$ and $c_i'$ (with $i=1,2$). In particular, $i:K\looparrowright Y$ lifts to an embedding in $\tilde{Y}$. By the separability of $i_*(\pi_1(K))$, \cite{Sc} implies that there exists an intermediate finite cover $p:\hat{Y}\to Y$ of $\tilde{Y}\to Y$ such that $i:K\looparrowright Y$ lifts to an embedding $\hat{i}:K\hookrightarrow \hat{Y}$.

In Proposition \ref{nonseparable}, we took a finite cyclic cover of $\hat{N}$ along $\Sigma$. It can be done either geometrically, i.e. take finitely many copies of $\hat{N}\setminus \Sigma$ and paste them together, or algebraically, i.e. take a finite cyclic cover dual to the cohomology class defined by $\Sigma$. Here we will follow the algebraic process.

\vspace{3mm}

Now we show that $K\subset \hat{Y}$ defines a cohomology class $\kappa\in H^1(Y;\mathbb{Z})$, by using duality, i.e. taking algebraic intersection number.

Since $K$ intersects with each $3$-manifold piece of $\tilde{Y}$, it also intersects with each $3$-manifold piece of $\hat{Y}$. For each $3$-manifold piece $\hat{N}_s$ of $\hat{Y}$, $K\cap \hat{N}_s$ is a properly embedded oriented surface in $\hat{N}_s$, so it defines a cohomology class $\kappa_s\in H^1(\hat{N}_s;\mathbb{Z})$.

For each component $\hat{c}$ of $p^{-1}(c\cup c')$, suppose that it is adjacent to $\hat{N}_1$ and $\hat{N}_2$, then we need to show that $\kappa_1|_{\hat{c}}=\kappa_2|_{\hat{c}}$, i.e. the algebraic intersection numbers $[\hat{N}_1\cap K]\cap [\hat{c}]$ and $[\hat{N}_2\cap K]\cap [\hat{c}]$ are equal to each other. Here $\hat{N}_1$ and $\hat{N}_2$ are finite covers of $N_1$ and $N_2$ respectively.

Since $\Sigma_{1,1}$ and $\Sigma_{1,2}$ are different fibered surfaces in $N_1$, only one of them lies in $\hat{N}_1$. Without loss of generality, we suppose that $K\cap \hat{N}_1=\Sigma_{1,1}$, and $\hat{c}$ is a component of $p^{-1}(c)$. All other cases follow from the same argument.

Since $\Sigma_{1,1}$ is a fibered surface in both $\hat{N}_1$ and $N_1$, $\hat{N}_1\to N_1$ is a finite cyclic cover dual to $\Sigma_{1,1}$, and let the covering degree be $D$. Recall that $[\Sigma_{1,1}]\cap [c_1]=A$. Then $p^{-1}(c)\cap \hat{N}_1$ has $\text{gcd}(A,D)$ many components ($\hat{c}$ is one of them), and each of them has algebraic intersection number $\frac{A}{\text{gcd}(A,D)}$ with $\Sigma_{1,1}$.

\vspace{3mm}

So we have that $\langle\kappa_1,\hat{c}\rangle=\frac{A}{\text{gcd}(A,D)}$, and need to show $\langle\kappa_2,\hat{c}\rangle=\frac{A}{\text{gcd}(A,D)}$.

We first show that for $\hat{i}|_{\Sigma_{1,1}}:\Sigma_{1,1}\to \hat{N}_1$, there are exactly $\frac{A}{\text{gcd}(A,D)}$ points in $\{a_1,\cdots,a_A\}$ mapped to $\hat{c}$. For two points $a_s,a_t\in \{a_1,\cdots,a_A\}$, if $\hat{i}(a_s)$ and $\hat{i}(a_t)$ lie in the same component of $p^{-1}(c)\cap \hat{N}_1$, there is an oriented subarc $\tau$ of $p^{-1}(c)$ from $\hat{i}(a_s)$ to $\hat{i}(a_t)$. Take an oriented path $\gamma$ in $\Sigma_{1,1}$ form $a_s$ to $a_t$. Then $\tau\cdot \hat{i}(\gamma^{-1})$ is a loop in $\hat{N}_1$ and it projects to a loop $\delta$ in $N_1$.

Since $\tau\cdot\hat{i}(\gamma^{-1})$ is a loop in $\hat{N}_1$, the algebraic intersection number of $\Sigma_{1,1}$ with $\delta$ is a multiple of $D$. On the other hand, $\delta$ consists of the projection of $\tau$ and $\hat{i}(\gamma^{-1})$ in $N_1$, which helps us to compute the algebraic intersection number by another way. Since $\gamma$ lies in $\Sigma_{1,1}$, its projection in $N_1$ has $0$ algebraic intersection number with $\Sigma_{1,1}$. Since $p\circ \tau$ is a path on $c_1$ with initial point $a_s$ and terminal point $a_t$, by condition (8) in Proposition \ref{singularsurface}, the algebraic intersection number of $\Sigma_{1,1}$ with $p\circ \tau$ is $nA+(t-s)$ for some $n\in \mathbb{Z}$. (Actually, we need to slightly push $\tau$ and $\hat{i}(\gamma^{-1})$ along the positive direction of the corresponding component of $p^{-1}(c)$, such that their endpoints are away from $\Sigma_{1,1}$.)

From the two ways of computing the algebraic intersection number between $\Sigma_{1,1}$ and $\delta$, we get that $mD=nA+(t-s)$ holds for some integers $m$ and $n$. So $t-s$ is a multiple of $\text{gcd}(A,D)$. It implies that for each component of $p^{-1}(c)\cap \hat{N}_1$, there are exactly $\frac{A}{\text{gcd}(A,D)}$ points in $\{a_1,\cdots,a_A\}$ mapped to it. In particular, it holds for $\hat{c}$.

There are exactly $\frac{A}{\text{gcd}(A,D)}$ points in $\{a_1,\cdots,a_A\}\cap\hat{c}$. Each of them lies in a fibered surface in $K\cap \hat{N}_2$, and the algebraic intersection number between the fibered surface and $\hat{c}$ is $1$. So we have $\langle\kappa_2, \hat{c}\rangle=\frac{A}{\text{gcd}(A,D)}=\langle \kappa_1, \hat{c}\rangle$.

By an M-V sequence argument, we get that $\hat{i}:K\hookrightarrow \hat{Y}$ defines a cohomology class $\kappa\in H^1(Y;\mathbb{Z})$, by taking the algebraic intersection number of any $1$-cycle in $Y$ with $K$.

\vspace{3mm}

As in Proposition \ref{nonseparable}, we take a finite cover of $\hat{Y}$ dual to $\kappa$ to get a further finite cover $q:\bar{Y}\to Y$ such that $K$ embedds in $\bar{Y}$. We can further require that each component of $q^{-1}(c\cup c')$ intersects with exactly two surface pieces in $K$, with algebraic intersection number $1$. Let $\bar{N}_1$ and $\bar{N}_2$ be the $3$-manifold pieces of $\bar{Y}$ containing $\Sigma_{1,1}$ and $\Sigma_{2,1}$ respectively, and let $\bar{c},\bar{c}'\subset \bar{N}_1\cap \bar{N}_2$ be singular circles containing the two points in $\Sigma_{1,1}\cap \Sigma_{1,2}$. Then we can compute the relation between $\text{deg}(\bar{c}\to c)$ and $\text{deg}(\bar{c}'\to c')$ from $\text{deg}(\bar{N}_1\to N_1)$ and $\text{deg}(\bar{N}_2\to N_2)$ by two ways, and get a contradiction as in Proposition \ref{nonseparable}.
\end{proof}

Now we are ready to prove Theorem \ref{singular}.

\begin{proof}
For a singular $3$-manifold $X=M_1\cup_{\gamma}M_2$, Lemma \ref{singular2V1E} and Lemma \ref{singular2V2E} imply that there exists a singular finite semicover $Y=N_1 \cup_{c\cup c'}N_2$ of $X$ such that the conditions in Lemma \ref{singular2V2E} hold.

By Proposition \ref{singularsurface}, there exists a $\pi_1$-injective singular immersion $i:K\looparrowright Y$ satisfying the conditions in Proposition \ref{singularsurface}. Then Proposition \ref{singularnonseparable} implies that $i_*(\pi_1(K))$ is not separable in $\pi_1(Y)$. Since $\pi_1(Y)$ is a subgroup of $\pi_1(X)$, Lemma \ref{subgroup} implies that $i_*(\pi_1(K))$ is not separable in $\pi_1(X)$.

If both $M_1$ and $M_2$ are $3$-manifolds with boundary, $K$ is a union of surfaces with boundary along finitely many points, so $\pi_1(K)$ is a free group. If at least one of $M_1$ and $M_2$ is a closed $3$-manifold, then $K$ is a union of closed surfaces and (possibly empty set of) bounded surfaces along finitely many points, so $\pi_1(K)$ is a free product of free groups and surface groups.
\end{proof}

The following direct corollary of Theorem \ref{singular} implies that any HNN extension of a hyperbolic $3$-manifold group along cyclic subgroups is not LERF.

The readers may compare this corollary with the result in \cite{Ni2}, which gives a sufficient and necessary condition for an HNN extension of a free group along cyclic subgroups being LERF. Note that Niblo's condition holds for a generic pair of cyclic subgroups in a free group.

\begin{corollary}\label{HNN}
Let $M$ be a finite volume hyperbolic $3$-manifold, and $A,B<\pi_1(M)$ be two infinite cyclic subgroups with an isomorphism $\phi:A\to B$, then the HNN extension $$\pi_1(M)*_{A^t=B}=\langle \pi_1(M),t\ |\ tat^{-1}=\phi(a),\forall a\in A\rangle$$ is not LERF.
\end{corollary}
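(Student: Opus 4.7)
The plan is to exhibit inside $G := \pi_1(M) *_{A^t = B}$ a subgroup of the form $\pi_1(M) *_B \pi_1(M)$ — the fundamental group of a union of two copies of $M$ along an essential circle — and then apply Theorem \ref{singular} together with Lemma \ref{subgroup}.

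Let $H := \langle \pi_1(M),\; t\pi_1(M)t^{-1}\rangle \leq G$, and consider the Bass-Serre tree $T$ of the HNN extension $G$. Then $\pi_1(M)$ stabilizes the base vertex $v_0$ and $t\pi_1(M)t^{-1}$ stabilizes the adjacent vertex $v_1 = t\cdot v_0$; the single edge between them has stabilizer $\pi_1(M)\cap t\pi_1(M)t^{-1}$. By Britton's Lemma, this intersection equals $B$: any element of $\pi_1(M)$ expressible as $tgt^{-1}$ with $g\in\pi_1(M)$ forces $g\in A$, in which case $tgt^{-1}=\phi(g)\in B$. A standard Bass-Serre argument — the subgroup generated by two vertex stabilizers joined by a single edge is the amalgamated free product along the edge stabilizer — then identifies
\[
H \;\cong\; \pi_1(M) \ast_{B}\, t\pi_1(M)t^{-1}.
\]
Identifying the second factor with another abstract copy of $\pi_1(M)$ by conjugation by $t^{-1}$, the amalgamated $B$ on the first side corresponds to $A$ on the second side via $\phi^{-1}$.

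Topologically, $H$ is then the fundamental group of $M_1 \cup_\gamma M_2$, where $M_1, M_2$ are two copies of $M$ glued along an essential loop representing $B$ in $M_1$ and $A$ in $M_2$; essentialness of the loop is automatic since $A, B$ are infinite cyclic subgroups of a torsion-free group. Theorem \ref{singular} (which, as noted in the preamble to Section \ref{sectionsingular}, accommodates nonprimitive and parabolic generators of the amalgamating cyclic subgroup) then implies $H$ is not LERF, and Lemma \ref{subgroup} yields that $G$ is not LERF.

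The only step requiring care is the Bass-Serre bookkeeping that identifies $H$ with the correct amalgamated product and the correct $\phi^{-1}$ identification of subgroups; everything else is a direct appeal to Theorem \ref{singular}. No new geometric input is needed beyond what is already developed for that theorem.
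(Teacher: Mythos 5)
Your proof is correct and takes essentially the same approach as the paper: both exhibit a copy of $\pi_1(M)*_{\mathbb{Z}}\pi_1(M)$ (two copies of $\pi_1(M)$ amalgamated over an infinite cyclic subgroup) inside the HNN extension and then conclude by Theorem \ref{singular} and Lemma \ref{subgroup}. The only cosmetic difference is that the paper first passes to the index-two kernel of the map to $\mathbb{Z}_2$ sending $t\mapsto\bar{1}$ and reads off the amalgam from the resulting two-vertex, two-edge graph of groups, whereas you generate the same subgroup directly as $\langle \pi_1(M), t\pi_1(M)t^{-1}\rangle$ and identify it via the Bass--Serre tree.
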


\begin{proof}
Let $\pi_1(M)_{A^t=B}\to \mathbb{Z}_2$ be the homomorphism which kills all elements in $\pi_1(M)$ and maps $t$ to $\bar{1}\in \mathbb{Z}_2$. Then the kernel $H$ is an index two subgroup of $\pi_1(M)*_{A^t=B}$.

The subgroup $H$ has a graph of group structure such that the graph consists of two vertices and two edges connecting these two vertices. The vertex groups are two copies of $\pi_1(M)$, and the edge groups are both infinite cyclic. So $H$ contains a subgroup which is a $\mathbb{Z}$-amalgamation of two copies of $\pi_1(M)$. Then Theorem \ref{singular} and Lemma \ref{subgroup} imply that $\pi_1(M)_{A^t=B}$ is not LERF.
\end{proof}

\subsection{More general cases}\label{general}

Actually, the proof of Theorem \ref{singular} only uses the machinery on hyperbolic $3$-manifolds for $M_1$, and $M_2$ only need to satisfy some mild conditions. So we have the following generalization of Theorem \ref{singular}.

\begin{theorem}\label{moresingular}
Let $M_1$ be a finite volume hyperbolic $3$-manifold, and $M_2$ be a compact fibered manifold over the circle, i.e. $M_2=N\times I/\phi$ for some orientation preserving self-homeomorphism $\phi:N\to N$ on a compact $n$-manifold $N$ ($n>0$). We also suppose that $\pi_1(N)$ has some nontrivial finite quotient.

Let $S^1\to M_1$ be an essential circle in $M_1$, and $S^1\to M_2$ be an essential circle which has nonzero algebraic intersection number with $N$. Then the $\mathbb{Z}$-amalgamation $$\pi_1(M_1\cup_{S^1}M_2)$$ is not LERF.
\end{theorem}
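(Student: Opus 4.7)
The plan is to adapt the proof of Theorem \ref{singular} step by step, keeping the $M_1$-side arguments intact and replacing each use of hyperbolicity of $M_2$ in Section \ref{sectionsingular} by the two new hypotheses: $M_2$ is already given as fibered with fiber $N$, and $\pi_1(N)$ has a nontrivial finite quotient. Hyperbolicity of $M_2$ was used there in three places: (i) the virtual fibering and intersection-number normalization in Lemma \ref{singular2V1E}; (ii) the production in Lemma \ref{singular2V2E} of a finite cover of $M_2$ containing two distinct elevations of $\gamma_2$; and (iii) the use in Propositions \ref{singularsurface} and \ref{singularnonseparable} that each vertex manifold is fibered. Item (iii) is automatic once $N_2$ is built fibered.

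For (i), no virtual fibering is needed since $M_2$ is fibered by assumption. Let $\alpha_2\in H^1(M_2;\mathbb{Z})$ be the fibered class dual to $N$ and $d = \langle \alpha_2,[\gamma_2]\rangle \neq 0$. Passing to the $|d|$-sheeted cyclic cover of $M_2$ dual to $\alpha_2$, the lifted fibered class has algebraic intersection $1$ with each elevation of $\gamma_2$; matching covering degrees with the $M_1$-side proceeds exactly as in the original proof.

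Step (ii) is the main obstacle and is the only place where the hypothesis on $\pi_1(N)$ is needed. We need a finite cover of $M_2$ in which a positive power of $\gamma_2$ has two distinct homeomorphic elevations, equivalently a nontrivial finite quotient of $\pi_1$ of a suitable cover of $M_2$ killing this element. Since $\pi_1(N)$ is finitely generated and has a nontrivial finite quotient of some index $n_0$, the intersection of the finitely many subgroups of $\pi_1(N)$ of index at most $n_0$ is a characteristic, hence $\phi_\ast$-invariant, finite-index normal subgroup $K\triangleleft \pi_1(N)$; set $Q = \pi_1(N)/K$. Let $k$ be the order of the induced automorphism of $Q$, and pass to the $k$-sheeted cyclic cover $M_2'$ of $M_2$ along the fiber, so that the new monodromy $\phi^k$ acts trivially on $Q$; the surjection $\pi_1(N)\to Q$ then extends to a surjection $\pi_1(M_2')\to Q$. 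A sufficiently high power $\gamma_2^m$ lifts to $M_2'$ and, since $Q$ is finite, after raising to a further power lies in the kernel of this surjection. The corresponding finite cover of $M_2'$ thus contains multiple homeomorphic elevations of $\gamma_2^m$. Combining this with step (i) and a final degree-matching cyclic cover with the two lifts $c_1,c_1'$ produced on the $N_1$-side through the combination theorem and Theorem \ref{virtualretract} (as in the original proof) yields $Y = N_1\cup_{c\cup c'}N_2$ satisfying the analogue of Lemma \ref{singular2V2E}.

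With such a $Y$ in hand, Propositions \ref{singularsurface} and \ref{singularnonseparable} carry over verbatim: the two distinct fibered classes $\Sigma_{1,1},\Sigma_{1,2}$ on $N_1$ with unequal intersection numbers $A\neq B$ still come from Thurston norm perturbation on the hyperbolic $N_1$, and on the $N_2$-side one takes $2n$ parallel copies of a single fibered surface. Both the $\pi_1$-injectivity of the singular immersion $i:K\looparrowright Y$ and the cyclic-cover-and-degree-count argument for nonseparability use only that each vertex manifold is fibered. A final application of Lemma \ref{subgroup} transfers nonseparability from $\pi_1(Y)$ to $\pi_1(M_1\cup_{S^1}M_2)$.
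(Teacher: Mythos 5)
Your overall route is the paper's: keep the $M_1$-side arguments (virtual fibering, virtual retract, Thurston-norm perturbation) verbatim, use the given fibration of $M_2$ in place of virtual fibering, and observe that the singular-surface construction and the degree-counting nonseparability argument only use the fiber-bundle structures. The gap is in the step you yourself single out as the main obstacle: producing two distinct elevations of the gluing circle on the $M_2$ side. From ``$\gamma_2^m$ lies in the kernel of $\pi_1(M_2')\to Q$'' you conclude that the cover corresponding to this kernel ``contains multiple homeomorphic elevations of $\gamma_2^m$''. That inference is false in general: what you get are $|Q|$ lifts of the \emph{based loop} $\gamma_2^m$, but as subsets of the cover these lifts can all share the same image circle. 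The number of circle components of the preimage of $\gamma_2$ in the $Q$-cover is $|Q|/r$, where $r$ is the order of the image $\bar g$ of $[\gamma_2]$ in $Q$, and passing to powers of the loop does not change this number. So if $\bar g$ generates $Q$ (for instance $M_2=T^2$ fibered by $S^1\times\{pt\}$, $\gamma_2$ the diagonal curve, $Q=\mathbb{Z}_2$ read off from the fiber coordinate: the kernel cover contains a single circle over $\gamma_2$), your cover has only one elevation, and the singular semicover $Y=N_1\cup_{c\cup c'}N_2$ --- which needs two disjoint circles in $N_2$ of equal degree over $\gamma_2$ --- cannot be assembled.

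The statement you need is true, but it requires an extra idea that your write-up omits. For example: in the $Q$-cover with the section generator sent to $1\in Q$, the (possibly unique) elevation of $\gamma_2$ has algebraic intersection number $r>1$ with the enlarged fiber, so a further $r$-fold cyclic cover along the fiber splits it into $r\geq 2$ components of equal degree over $\gamma_2$ and intersection number $1$ with the fiber; alternatively, when $\bar g$ generates $Q$, the group $Q$ is abelian and one may redefine the extension $\pi_1(M_2')\to Q$ by sending the section generator $t$ to $\bar g^{-1}$, so that $[\gamma_2]$ itself dies in $Q$ and acquires $|Q|\geq 2$ degree-one elevations. With such a fix (together with the degree-matching cyclic covers on the $N_1$ side that you mention, which are fine), the rest of your argument --- Propositions \ref{singularsurface} and \ref{singularnonseparable} applied with the unperturbed fiber on the $N_2$ side, then Lemma \ref{subgroup} --- goes through exactly as in the paper.
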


We give a sketch of the proof which parallels the proof of Theorem \ref{singular}. It is not hard to see that the proof works even if we only assume that $N$ is a CW-complex and $M_2$ is a mapping torus of $N$ (still $\pi_1(N)$ need to have a nontrivial finite quotient), but we prefer to only state the result for the manifold case.

\begin{proof}
At first, we can find a singular finite semicover $M_1'\cup_{\gamma}M_2'$ such that similar conditions in Lemma \ref{singular2V1E} hold. For $M_1$, we still use the virtual fibering theorem, virtual infinite betti number theorem, and the virtual retract property to find a fibered structure in some finite cover of $M_1$, such that conditions (1) and (2) in Lemma \ref{singular2V1E} hold. For $M_2$, it already has a fibered structure, and we may only need to take a finite cyclic cover of $M_2$ along $N$.

Then we can find a further singular semicover $N_1\cup_{c\cup c'}N_2$ such that similar conditions in Lemma \ref{singular2V2E} hold. For $M_1'$, we still use the virtual retract property and LERFness to get a finite cover $N'$ satisfying conditions (2) and (3) in Lemma \ref{singular2V2E}. For $M_2'$, we use the fact that $\pi_1(N)$ admits a nontrivial finite quotient to find a finite cover $N_2$ of $M_2'$, such that the preimage of $\gamma$ in $N_2$ contains at least two components, and they are mapped to $\gamma$ by homeomorphisms.

In the construction of the $\pi_1$-injective immersed singular object (not a singular surface if $n\ne 2$) in Proposition \ref{singularsurface}, we only perturb fibered structures in $N_1$, do all nontrivial works over there, and always use the original fibered structure of $N_2$. So the same construction gives a $\pi_1$-injective immersed singular object in $N_1\cup_{c\cup c'}N_2$, which satisfies the conditions in Proposition \ref{singularsurface}.

The proof of Proposition \ref{singularnonseparable} does not use any $3$-manifold topology. It only uses the fiber bundle over circle structures and counts covering degrees. So the same proof shows that the above $\pi_1$-injective immersed singular object gives a nonseparable subgroup in $\pi_1(N_1\cup_{c\cup c'}N_2)$, which is also nonseparable in $\pi_1(M_1\cup_{S^1}M_2)$.

The nonseparable subgroup constructed above is a free product of surface groups, finite index subgroups of $\pi_1(N)$ and free groups.
\end{proof}

Since the perturbation of fiber bundle over circle structures works in any dimension, we have the following further corollary.

\begin{corollary}
Let $M_1$ be a finite volume hyperbolic $3$-manifold, and $M_2$ be a compact manifold with a fiber bundle over circle structure and $b_1(M_2)\geq 2$.

Let $S^1\to M_1$ be an essential circle in $M_1$, and $S^1\to M_2$ be a circle in $M_2$ with nonzero image in $H_1(M_2;\mathbb{Q})$. Then the $\mathbb{Z}$-amalgamation $$\pi_1(M_1\cup_{S^1}M_2)$$ is not LERF.
\end{corollary}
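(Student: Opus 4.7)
The plan is to verify the hypotheses of Theorem \ref{moresingular} for $(M_1, M_2)$ after replacing the given fibration of $M_2$ with a slightly perturbed one. The two things that must hold are: (a) the circle $c\colon S^1 \to M_2$ has nonzero algebraic intersection with the fiber $N$, and (b) $\pi_1(N)$ admits a nontrivial finite quotient. Neither is forced by the originally given data, so a new fibration must be produced.

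For (a), let $\alpha_0 \in H^1(M_2; \mathbb{Z})$ be the primitive class dual to the given fibration and $[c] \in H_1(M_2; \mathbb{Q})$ the nonzero class of the circle. The set $\mathcal{F} \subset H^1(M_2; \mathbb{R})$ of fibered classes is an open positive cone: in dimension $3$ this is part of Thurston's norm theory, and in higher dimensions it follows from Tischler's theorem (non-vanishing closed $1$-forms form an open subset of the closed $1$-forms under the $C^0$-topology, and this passes to cohomology). The evaluation $\beta \mapsto \beta([c])$ is a nonzero linear functional on $H^1(M_2; \mathbb{R})$, so its kernel is a proper hyperplane, which cannot contain the open neighborhood of $\alpha_0$ inside $\mathcal{F}$. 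Therefore I can find a primitive integer class $\alpha \in \mathcal{F}$ arbitrarily close to $\alpha_0$ with $\alpha([c]) \neq 0$; the fibration dual to $\alpha$ has connected fiber $N$ meeting $c$ with algebraic intersection number $\alpha([c]) \neq 0$.

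For (b), the fibration $N \hookrightarrow M_2 \to S^1$ expresses $\pi_1(M_2)$ as a semidirect product $\pi_1(N) \rtimes \mathbb{Z}$, and the five-term exact sequence yields
$$0 \to H_1(N; \mathbb{Q})_{\phi_*} \to H_1(M_2; \mathbb{Q}) \to \mathbb{Q} \to 0,$$
the first term being the coinvariants under the monodromy action. The hypothesis $b_1(M_2) \geq 2$ forces $H_1(N; \mathbb{Q})_{\phi_*}$, and therefore $H_1(N; \mathbb{Q})$ itself, to be nonzero. Thus $\pi_1(N)$ surjects onto $\mathbb{Z}$, and in particular onto every nontrivial finite cyclic group, verifying (b). With both hypotheses now in place, Theorem \ref{moresingular} immediately yields the nonLERFness of $\pi_1(M_1 \cup_{S^1} M_2)$.

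The only step that is not entirely formal is the openness of $\mathcal{F}$ used in (a); in dimension $3$ this is immediate from Thurston's theory, while in higher dimensions it rests on Tischler's result. Once that openness is granted, the whole argument reduces to elementary linear algebra together with the standard Wang-type computation for a fiber bundle over the circle.
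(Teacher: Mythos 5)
Your proposal is correct and follows essentially the same route as the paper's proof: perturb the fibered class so that it pairs nontrivially with the circle, note that $b_1(M_2)\geq 2$ forces $b_1(N)\geq 1$ (hence a nontrivial finite quotient of $\pi_1(N)$), and then invoke Theorem \ref{moresingular}. You merely make explicit two points the paper leaves implicit, namely the openness of the set of fibered classes in all dimensions and the Wang/five-term sequence argument for $b_1(N)\geq 1$.
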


\begin{proof}
At first, $b_1(M_2)\geq 2$ implies that, for any fiber bundle over circle structure $M_2=N\times I/\phi$, $b_1(N)\geq 1$ holds. So for any such $N$, $\pi_1(N)$ has a nontrivial finite quotient.

We take any fibered structure of $M_2$ and write $M_2$ as $M_2=N\times I/\phi$. By perturbing the fibered structure on $M_2$, we can assume that $[N]$ has non zero algebraic intersection number with $[S^1]\in H_1(M_2;\mathbb{Z})$. So we are in the situation of Theorem \ref{moresingular}, and $\pi_1(M_1\cup_{S^1}M_2)$ is not LERF.
\end{proof}

\section{NonLERFness of arithmetic hyperbolic manifold groups}\label{sectionarithmetic}

In this section, we give the proof of Theorem \ref{arithmetic} and Theorem \ref{noncompact}, and give some further results on nonLERFness of high dimensional nonarithmetic hyperbolic manifold groups. These results imply that most known examples of high dimensional hyperbolic manifolds have nonLERF fundamental groups.

For all proofs in this section, to prove a group is not LERF, we only need to show that it contains a subgroup isomorphic to one of the nonLERF groups in Theorem \ref{dim3} or Theorem \ref{singular}.

We start with proving Theorem \ref{noncompact}, which claims that all noncompact arithmetic hyperbolic manifolds with dimension $\geq 4$ have nonLERF fundamental groups.

\begin{proof}

We first consider the case that $M$ is a noncompact standard arithmetic hyperbolic manifold.

We first show that $M$ contains a (immersed) noncompact totally geodesic $3$-dimensional submanifold $N$. This is well-known for experts, but the author did not find a reference on it, so we give a short proof here.

Since $M$ is noncompact, it is defined by $\mathbb{Q}$ and a nondegenerate quadratic form $f:\mathbb{Q}^{m+1}\to \mathbb{Q}$ with negative inertia index $1$. Let the symmetric bilinear form defining $f$ be denoted by $B(\cdot,\cdot)$.

Since $M$ is not compact, $f$ represents $0$ nontrivially in $\mathbb{Q}^{m+1}$, thus there exists $\vec{w}\ne \vec{0}\in \mathbb{Q}^{m+1}$ such that $B(\vec{w},\vec{w})=f(\vec{w})=0$. Since $f$ is nondegenerate, there exists $\vec{v}\ne 0\in \mathbb{Q}^{m+1}$ such that $B(\vec{v},\vec{w})\ne 0$. Let $V=\text{span}_{\mathbb{Q}}(\vec{v},\vec{w})$. Then it is easy to check that $V^\perp\cap V=\{\vec{0}\}$ and the restriction of $B(\cdot,\cdot)$ on $V^\perp$ is positive definite.

Let $(\vec{v}_1,\cdots,\vec{v}_{m-1})$ be a $\mathbb{Q}$-basis of $V^\perp$ such that $B(\vec{v}_i,\vec{v}_j)=\delta_{ij}$ for $i,j\in\{1,\cdots,m-1\}$. Then $W=\text{span}_{\mathbb{Q}}(\vec{v_1},\vec{v_2},\vec{v},\vec{w})$ is a $4$-dimensional subspace of $\mathbb{Q}^{m+1}$, such that the restriction of $f$ on $W$ has negative inertia index $1$, and $f$ represents $0$ nontrivially in $W$.

So $W$ and $f|_W$ define a (immersed) noncompact totally geodesic $3$-dimensional suborbifold in $\mathbb{H}^{m}/SO_0(f,\mathbb{Z})$, which gives a (immersed) noncompact totally geodesic $3$-dimensional submanifold $N^3$ in $M$.

\vspace{3mm}

Now we are ready to prove the theorem. Here we consider $M$ and $N^3$ as compact manifolds, by truncating their horocusps.

Each boundary component of $M$ has an Euclidean structure, so it is finitely covered by $T^{m-1}$, and each boundary component of $N$ is homeomorphic to $T^2$. We first take two copies of $N$. For each $T^2$ component of $\partial N$, take a long enough immersed $T^2\times I$ in the corresponding boundary component of $M$, which is finitely covered by $T^{m-1}=(T^2\times S^1)\times T^{m-4}$, such that the $T^2$ factor is identified with $T^2\subset \partial N^3$, and the $I$ factor wraps around the $S^1$ factor. This construction is same with the Freedman tubing construction in dimension $3$. In \cite{LR}, it is shown that as long as the $I$ factor wraps around $S^1$ sufficiently many times, this immersed $N\cup (\partial N\times I)\cup N$ is $\pi_1$-injective, so $\pi_1(N\cup (\partial N\times I)\cup N)<\pi_1(M)$.

Topologically, $N\cup (\partial N\times I)\cup N$ is just the double of $N$ along $\partial N$. Since the double of $N$ is a closed mixed $3$-manifold with nontrivial geometric decomposition, Theorem \ref{dim3} implies that $\pi_1(N\cup (\partial N\times I)\cup N)$ is not LERF. Then Lemma \ref{subgroup} implies $\pi_1(M)$ is not LERF.

Moreover, Theorem \ref{dim3} implies the existence of nonseparable free subgroups and nonseparable surface subgroups in $\pi_1(M)$.

\vspace{3mm}

If $M$ is a noncompact arithmetic hyperbolic manifold defined by quaternions, it also contains noncompact $3$-dimensional totally geodesic submanifolds, by doing the same process as above for quadratic forms over quaternions. So the above proof also works in the quaternion case.

Since $7$-dimensional arithmetic hyperbolic manifolds defined by octonions are all compact, the proof is done.

\end{proof}

Then we give the proof of Theorem \ref{arithmetic}, which claims that all arithmetic hyperbolic manifolds with dimension $\geq 5$ which are not those sporadic examples in dimension $7$ have nonLERF fundamental groups. In this proof, we use two totally geodesic $3$-dimensional submanifolds, instead of just using one such submanifold as in the proof of Theorem \ref{noncompact}.

\begin{proof}

We first suppose that $M^m$ is a standard arithmetic hyperbolic manifold, with $m\geq 5$.

By the definition of standard arithmetic hyperbolic manifolds, there exists a totally real number field $K$, and a nondegenerate quadratic form $f:K^{m+1}\to K$ defined over $K$, such that the negative inertial index of $f$ is $1$ and $f^{\sigma}$ is positive definite for all non-identity embeddings $\sigma:K\to \mathbb{R}$. Moreover, $\pi_1(M)$ is commensurable with $SO_0(f;\mathcal{O}_K)$. So to prove $\pi_1(M)$ is not LERF, we need only to show $SO_0(f;\mathcal{O}_K)$ is not LERF.

We first diagonalize the quadratic form $f$ such that the symmetric matrix corresponding to $f$ is $A=\text{diag}(k_1,\cdots,k_m,k_{m+1})$ with $k_1,\cdots,k_m>0$ and $k_{m+1}<0$.

\vspace{3mm}

First suppose that there exists $i\in\{1,\cdots,m\}$ such that $-\frac{k_i}{k_{m+1}}$ is not a square in $K$, and we can assume $i=1$. Then $f$ has two quadratic subforms defined by $\text{diag}(k_1,k_2,k_3,k_{m+1})$ and $\text{diag}(k_1,k_4,k_5,k_{m+1})$ respectively. These two subforms satisfy the conditions for defining arithmetic groups in $\text{Isom}_+(\mathbb{H}^3)$, and we denote these two subforms by $f_1$ and $f_2$.

Then $SO_0(f_1;\mathcal{O}_K)$ and $SO_0(f_2;\mathcal{O}_K)$ are both subgroups of $SO_0(f;\mathcal{O}_K)$. Each of them fix a $3$-dimensional totally geodesic plane in $\mathbb{H}^m$, and these two planes perpendicularly intersect with each other along a $1$-dimensional bi-infinite geodesic. (Here we do use that $m\geq 5$.) We denote these two $3$-dimensional planes by $P_1$ and $P_2$ with $P_1\cap P_2=L$. Then $M_i=P_i/SO_0(f_i;\mathcal{O}_K)$ is a hyperbolic $3$-orbifold for each $i=1,2$. Moreover, it is easy to see that $SO_0(f_1;\mathcal{O}_K)\cap SO_0(f_2;\mathcal{O}_K)=SO_0(f_3;\mathcal{O}_K)$, where $f_3$ is defined by $\text{diag}(k_1,k_{m+1})$ and $SO_0(f_3;\mathcal{O}_K)$ fixes $L$. The condition that $-\frac{k_1}{k_{m+1}}$ is not a square in $K$ implies that $f_3$ only represents $0$ trivially in $K^2$, so $SO_0(f_3;\mathcal{O}_K)\cong \mathbb{Z}$.

By a routine argument in hyperbolic geometry and using LERFness of hyperbolic $3$-manifold groups (e.g. see Lemma 7.1 of \cite{BHW}), there exist torsion free finite index subgroups $\Lambda_i<SO_0(f_i;\mathcal{O}_K)$ with $SO_0(f_3;\mathcal{O}_K)<\Lambda_i$ for $i=1,2$, and the subgroup of $SO_0(f;\mathcal{O}_K)$ generated by $\Lambda_1$ and $\Lambda_2$ is isomorphic to $\Lambda_1*_{\mathbb{Z}}\Lambda_2$.

So $SO_0(f;\mathcal{O}_K)$ contains a subgroup $\Lambda_1*_{\mathbb{Z}}\Lambda_2$, which is the fundamental group of $M_1\cup_{\gamma}M_2$ for two hyperbolic $3$-manifolds $M_1$ and $M_2$. By Theorem \ref{singular}, $SO_0(f;\mathcal{O}_K)$ is not LERF, and $\pi_1(M)$ is not LERF.

If $M$ is closed, then both $M_1$ and $M_2$ are closed, and the nonseparable subgroup can be chosen to be a free product of closed surface groups and free groups. If $M$ has cusps, the nonseparable subgroup might be a free group.

If $-\frac{k_i}{k_{m+1}}$ is a square in $K$ for all $i\in \{1,\cdots,m\}$, then the quadratic form $f$ is equivalent to the diagonal form $\text{diag}(\underbrace{1,\cdots,1}_m,-1)$. It is easy to check that $f$ is also equivalent to the diagonal form $\text{diag}(2,2,\underbrace{1,\cdots,1}_{m-2},-1)$, and we reduce to the previous case.

\vspace{3mm}

If $M^m$ is an arithmetic hyperbolic manifold defined by a quadratic form over quaternions, we can also find two totally geodesic $3$-dimensional submanifolds intersecting along one circle. This can be done by diagonalizing the (skew-Hermitian) matrix with quaternion entries, and take two $2\times 2$ submatrices with one common entry which contributes to the negative inertia index. Then the same proof as above also works in this case.
\end{proof}

\begin{remark}\label{not_geom_finite}
Actually, the nonseparable subgroups constructed in this proof is not geometrically finite, so it is consistent with the result in \cite{BHW} (standard arithmetic hyperbolic manifolds have LERF fundamental groups). For a cocompact lattice $\Lambda<\text{Isom}_+(\mathbb{H}^n)$ and a finitely generated subgroup $H<\Gamma$, $H$ is geometrically finite if and only if $H$ is a quasi-convexity subgroup of $\Lambda$ (\cite{Sw}), which is also equivalent to that the inclusion $H\hookrightarrow \lambda$ is a quasi-isometric embedding (\cite{BGSS}).

In our construction, the nonseparable subgroup $H<\pi_1(M^m)$ is a free product $H\cong H_1*H_2$, where $H_1$ is a fibered surface subgroup of a (immersed) $3$-dimensional totally geodesic submanifold $M_1$ of $M^m$. So if $H\hookrightarrow \pi_1(M^m)$ is a quasi-isometric embedding, since $H_1\hookrightarrow H_1*H_2\cong H$ and $\pi_1(M_1)\hookrightarrow \pi_1(M^m)$ are both quasi-isometric embeddings, $H_1\hookrightarrow \pi_1(M_1)$ must also be a quasi-isometric embedding. However, it is impossible since fibered surface subgroups of hyperbolic $3$-manifold groups have exponential distortion.

\begin{diagram}
H_1 &\rTo & H\cong H_1*H_2\\
\dTo & &\dTo\\
\pi_1(M_1)&\rTo &\pi_1(M^m)
\end{diagram}
\end{remark}

In \cite{GPS}, \cite{Ag2} and \cite{BT}, the authors did cut-and-past surgery on standard arithmetic hyperbolic manifolds along codimension-$1$ totally geodesic arithmetic submanifolds, and constructed many nonarithmetic hyperbolic manifolds.

In \cite{GPS}, the authors took two non-commensurable standard arithmetic hyperbolic $m$-manifolds, cut them along isometric codimension-$1$ totally geodesic submanifolds, then glue them together by another way. This process is called "interbreeding". In \cite{Ag2} and \cite{BT}, the authors cut one standard arithmetic hyperbolic $m$-manifold along two isometric codimension-$1$ totally geodesic submanifolds, then glue it back in a different way. This process is called "inbreeding", which is first carried out in \cite{Ag2} for $4$-dimensional case, and then generalized to higher dimensions in \cite{BT}.

Since all manifolds constructed in \cite{GPS}, \cite{Ag2} and \cite{BT} contain codimension-$1$ totally geodesic arithmetic submanifolds, we have the following direct corollary of Theorem \ref{arithmetic}.

\begin{theorem}\label{cutpaste}
If $M^m$ is a nonarithmetic hyperbolic $m$-manifold constructed in \cite{GPS} or \cite{BT}, with $m\geq 6$, then $\pi_1(M)$ is not LERF.

Moreover, if $M$ is closed, there exists a nonseparable subgroup isomorphic to a free product of surface groups and free groups. If $M$ is not closed, the nonseparable subgroup is isomorphic to either a free subgroup or a free product of surface groups and free groups.
\end{theorem}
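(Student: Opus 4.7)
The plan is to treat Theorem \ref{cutpaste} as a formal corollary of Theorem \ref{arithmetic} combined with Lemma \ref{subgroup}, exploiting the structural feature recalled just before the statement: every nonarithmetic $M^m$ produced by the interbreeding construction of \cite{GPS} or the inbreeding construction of \cite{BT} contains, by its very definition, a codimension-$1$ totally geodesic arithmetic submanifold $N^{m-1}$, namely a seam along which the cut-and-paste surgery was performed. The building blocks in both constructions are standard arithmetic hyperbolic manifolds defined by quadratic forms over totally real number fields, so $N$ is itself a standard arithmetic hyperbolic manifold; in particular $N$ is never a $7$-dimensional octonion arithmetic manifold, so the hypotheses of Theorem \ref{arithmetic} are unconditionally met.

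The first step is to check that the inclusion $N \hookrightarrow M$ is $\pi_1$-injective. A lift of $N$ to the universal cover $\mathbb{H}^m$ is a totally geodesic copy of $\mathbb{H}^{m-1}$, and $\pi_1(N)$ acts faithfully on it as the stabilizer of that hyperplane, so we obtain an honest inclusion $\pi_1(N)<\pi_1(M)$.

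The second step is to apply Theorem \ref{arithmetic} to $N$. Since $m \geq 6$, we have $\dim N = m-1 \geq 5$, which places $N$ squarely in the range covered by Theorem \ref{arithmetic}; that theorem supplies a finitely generated nonseparable subgroup $H<\pi_1(N)$ of the prescribed form. By Lemma \ref{subgroup}, $H$ is then also nonseparable in $\pi_1(M)$, so $\pi_1(M)$ is not LERF. The refined ``Moreover'' clause follows from the same dichotomy applied to $N$: if $M$ is closed then the compact totally geodesic $N$ is also closed, and Theorem \ref{arithmetic} yields a free product of closed surface groups and free groups; if $M$ is noncompact then $N$ may inherit cusps from $M$, and the noncompact alternative in Theorem \ref{arithmetic} provides either a free subgroup or a free product of closed surface groups and free groups.

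The main obstacle here is not a technical one but a matter of bookkeeping: one must verify from \cite{GPS} and \cite{BT} that the seams along which the surgery is performed are genuinely of standard arithmetic type (and thus covered by Theorem \ref{arithmetic}) and that at least one seam persists as a $\pi_1$-injective totally geodesic submanifold of the glued-up manifold, rather than being absorbed by the surgery. Both points are built into the design of the two constructions, so I expect no substantive difficulty beyond citing the relevant passages of \cite{GPS} and \cite{BT}.
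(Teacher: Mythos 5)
Your proposal is correct and follows exactly the same route the paper takes: the paper states Theorem \ref{cutpaste} as a direct corollary of Theorem \ref{arithmetic}, obtained by observing that the GPS/BT constructions by design retain a $\pi_1$-injective codimension-$1$ totally geodesic standard arithmetic submanifold of dimension $m-1\geq 5$, to which Theorem \ref{arithmetic} and Lemma \ref{subgroup} then apply. Your write-up merely spells out the $\pi_1$-injectivity and the dimension/compactness bookkeeping that the paper leaves implicit.
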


Another geometric way to construct hyperbolic $m$-manifolds is the reflection group method. Suppose $P$ is a finite volume polyhedron in $\mathbb{H}^m$ such that any two codimension-$1$ faces that intersect with each other have dihedral angle $\frac{\pi}{n}$ with integer $n\geq 2$. Then the group generated by reflections along codimension-$1$ faces of $P$ is a discrete subgroup of $\text{Isom}(\mathbb{H}^m)$ with finite covolume.

For any torsion-free finite index subgroup of a reflection group consisting of orientation preserving isometries, the quotient of $\mathbb{H}^m$ is a finite volume hyperbolic $m$-manifold $M$. $M$ is a closed manifold if and only if $P$ is compact. The hyperbolic manifolds constructed by this method are not necessarily arithmetic, and it is known that there exist closed nonarithmetic reflection hyperbolic manifolds with dimension $\leq 5$, and noncompact nonarithmetic reflection hyperbolic manifolds with dimension $\leq 10$ (\cite{VS} Chapter 6.3.2).

When $m\geq 5$, it is easy to see that $M$ still contains two totally geodesic $3$-dimensional submanifolds intersecting along a closed geodesic. To get such a picture, we take two totally geodesic $3$-dimensional planes in $\mathbb{H}^m$ that contain two $3$-dimensional faces of $P$ and intersect with each other along one edge of $P$. Then their images in $M$ are two immersed totally geodesic $3$-dimensional submanifolds. Similarly, for any $m\geq 4$, noncompact reflection hyperbolic $m$-manifolds also have noncompact totally geodesic $3$-dimensional submanifolds.

So we get the following theorem for finite volume hyperbolic manifolds arised from reflection groups. The proof is exactly same as the proof of Theorem \ref{arithmetic} and Theorem \ref{noncompact}.

\begin{theorem}\label{reflection}
Let $M$ be a closed hyperbolic $m$-manifold such that $m\geq 5$, or a noncompact finite volume hyperbolic $m$-manifold with $m\geq 4$. If $\pi_1(M)$ is commensurable with the reflection group of some finite volume polyhedron in $\mathbb{H}^m$, then $\pi_1(M)$ is not LERF.

Moreover, If $M$ is closed, the nonseparable subgroup is isomorphic to a free product of surface groups and free groups. If $M$ is noncompact, there exists a nonseparable subgroup isomorphic to a free group, and another nonseparable subgroup isomorphic to a surface subgroup.
\end{theorem}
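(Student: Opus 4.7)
The plan is to follow the proofs of Theorem~\ref{arithmetic} and Theorem~\ref{noncompact} verbatim, replacing the arithmetic input (the number-theoretic construction of totally geodesic $3$-submanifolds) by the reflection-group combinatorics already indicated in the paragraph preceding the statement. Fix a finite-volume polyhedron $P\subset\mathbb{H}^m$ with reflection group $\Gamma<\mathrm{Isom}(\mathbb{H}^m)$, and a torsion-free orientation-preserving finite-index subgroup $\Gamma_0<\Gamma$ (Selberg's lemma), commensurable with $\pi_1(M)$. By Lemma~\ref{subgroup} it suffices to produce the required nonseparable subgroup inside $\Gamma_0$.

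For the closed case ($m\geq 5$), since $\dim P=m\geq 5$, the link of any edge $e\subset P$ is a spherical polyhedron of dimension $m-2\geq 3$ and carries at least two $2$-faces. These extend to two distinct $3$-dimensional faces $F_1,F_2\supset e$ of $P$. Let $H_i=\mathrm{span}(F_i)\subset\mathbb{H}^m$; then $H_1\cap H_2\supset\mathrm{span}(e)=:L$ is a geodesic line. The stabilizer $\Gamma_i=\mathrm{Stab}_{\Gamma_0}(H_i)$ is, up to finite index, a reflection group acting as a lattice on $H_i\cong\mathbb{H}^3$, whose fundamental domain is the orthogonal projection of $P$ onto $H_i$. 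The intersection $\Gamma_1\cap\Gamma_2$ stabilizes $L$ and contains a hyperbolic element with axis $L$ (coming from reflections through walls of $P$ perpendicular to $L$, combined into orientation-preserving products). Closed hyperbolic $3$-orbifolds $M_i=H_i/\Gamma_i$ now share a common closed geodesic, and I would apply the Kleinian combination theorem and virtual retract argument exactly as in the proof of Theorem~\ref{arithmetic} to produce torsion-free finite-index $\Lambda_i<\Gamma_i$ with $\langle\Lambda_1,\Lambda_2\rangle\cong\Lambda_1*_\mathbb{Z}\Lambda_2<\Gamma_0$. Theorem~\ref{singular} then yields a nonseparable free product of closed surface groups and free groups.

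For the noncompact case ($m\geq 4$), a noncompact $3$-dimensional face $F$ of $P$ spans a totally geodesic plane $H\subset\mathbb{H}^m$ whose stabilizer acts as a noncocompact lattice on $H$, giving a noncompact totally geodesic hyperbolic $3$-submanifold $N$ in a finite cover of $M$ (which I may replace $M$ by). Each cusp cross-section of $M$ is finitely covered by $T^{m-1}$ containing a $T^2$ factor that is a boundary component of $N$. I would then run the Freedman--Liu--Reid tubing construction from \cite{LR} exactly as in Theorem~\ref{noncompact}: take two copies of $N$ and connect corresponding boundary tori by long immersed $T^2\times I$ cylinders in the $M$-cusps, wrapping sufficiently many times around the transverse $T^{m-3}$-factor so that the resulting $\pi_1$-injectivity criterion of \cite{LR} applies. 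The result is a closed mixed $3$-manifold group inside $\pi_1(M)$, and Theorem~\ref{dim3} produces both a nonseparable free subgroup and a nonseparable closed surface subgroup.

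The main obstacle to watch for is the first step in each case: verifying that the stabilizer of a $3$-dimensional face of $P$ is genuinely a lattice on the corresponding plane (rather than merely discrete), and, in the closed case, that $\Gamma_1\cap\Gamma_2$ really contains a hyperbolic element with axis $L$. Both facts rest on the standard structural theorem for reflection groups: the pointwise stabilizer of a face $F\subset P$ is generated by reflections through the walls of $P$ orthogonal to $\mathrm{span}(F)$, with fundamental domain the orthogonal projection of $P$ onto $\mathrm{span}(F)$, which is a finite-volume polyhedron whenever $P$ is. Once these points are verified, the rest of the argument is identical to the proofs of Theorems~\ref{arithmetic} and~\ref{noncompact}.
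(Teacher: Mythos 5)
Your overall strategy coincides with the paper's: for closed $M$ with $m\geq 5$, produce two totally geodesic $3$-dimensional suborbifolds meeting along a closed geodesic, run the combination/virtual-retract argument of Theorem~\ref{arithmetic} to get $\Lambda_1\ast_{\mathbb{Z}}\Lambda_2$, and apply Theorem~\ref{singular}; for noncompact $M$ with $m\geq 4$, take the span of a noncompact $3$-face, double it by the tubing construction of \cite{LR}, and apply Theorem~\ref{dim3} as in Theorem~\ref{noncompact}. The noncompact half of your argument is essentially the paper's. In the closed case, however, the two geometric facts you yourself flag as ``the main obstacle'' are precisely the content of this theorem beyond quoting the earlier results, and your resolutions fail as stated. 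First, you only arrange $H_1\cap H_2\supseteq L$. If the two $2$-faces of the link you choose share an edge of the link, then $F_1$ and $F_2$ share a $2$-face of $P$ and $H_1\cap H_2$ is a $2$-plane; the amalgamating subgroup is then Fuchsian rather than infinite cyclic, and Theorem~\ref{singular} does not apply. One must choose the faces via their defining walls: when $P$ is simple, exactly $m-1$ walls contain the edge $e$, their intersection is $\langle e\rangle$, and a $3$-face through $e$ is cut out by $m-3$ of them; so one needs two $(m-3)$-element sets of walls whose union is all $m-1$ of them, which is possible exactly when $2(m-3)\geq m-1$, i.e. $m\geq 5$. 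This is where the dimension hypothesis genuinely enters, parallel to the parenthetical ``Here we do use that $m\geq 5$'' in the proof of Theorem~\ref{arithmetic}; your selection by ``two distinct $2$-faces of the link'' does not secure it.

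Second, your justification that $\Gamma_1\cap\Gamma_2$ contains a hyperbolic element with axis $L$ --- products of ``reflections through walls of $P$ perpendicular to $L$'' --- presupposes mirrors orthogonal to $L$, and a Coxeter polytope need not have any wall orthogonal to the line spanned by an edge. What is needed (and true) is that $\mathrm{Stab}_\Gamma(L)$ acts cocompactly on $L$, i.e. the image of $L$ in the orbifold is a closed geodesic: since $\langle e\rangle$ is an intersection of mirrors, every point of $L$ projects into the compact $1$-skeleton of $P$, and a complete local geodesic carried by a finite geodesic $1$-complex, having a unique straight continuation at each vertex, must be periodic. The same kind of tiling argument --- $\langle F_i\rangle$ is a union of $3$-faces of copies of $P$, and any $\gamma\in\Gamma$ taking one such tile to another necessarily preserves $\langle F_i\rangle$, so there are finitely many $\mathrm{Stab}_\Gamma(\langle F_i\rangle)$-orbits of tiles --- is what makes $\mathrm{Stab}_\Gamma(\langle F_i\rangle)$ act on $\langle F_i\rangle$ as a cocompact (resp.\ finite-covolume) lattice; the ``structural theorem'' you invoke is misstated, since the pointwise stabilizer of a face $F$ is generated by reflections in the walls \emph{containing} $F$, not those orthogonal to its span, and the orthogonal projection of $P$ onto $\langle F\rangle$ is not a fundamental domain for the setwise stabilizer. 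Finally, a minor point: Lemma~\ref{subgroup} transfers nonLERFness from $\Gamma_0$ up to $\Gamma$, but $\pi_1(M)$ is only commensurable with $\Gamma$, so you should either invoke the commensurability invariance of LERF or build $\Lambda_1\ast_{\mathbb{Z}}\Lambda_2$ inside the common finite-index subgroup from the start, as is needed anyway for the ``moreover'' clause about the nonseparable subgroup lying in $\pi_1(M)$.
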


By the dimension reason, there are no $\pi_1$-injective $M_1\cup_{\gamma}M_2$ submanifold in a $4$-dimensional (arithmetic) hyperbolic manifold, so Theorem \ref{singular} does not give us any nonLERF fundamental group in dimension $4$. Actually, the nonLERFness of $4$-dimensional closed arithmetic hyperbolic manifold groups is proved in the author's more recent work \cite{Sun}.

\section{Further questions}\label{further}

In this section, we raise a few questions related to the results in this paper.

1. In Remark \ref{engulf}, we get that, for any mixed $3$-manifold $M$, there exist a finite cover $M'$ of $M$ and a $\pi_1$-injective properly immersed subsurface $\Sigma\looparrowright M'$, such that $\pi_1(\Sigma)$ is not contained in any finite index subgroup of $\pi_1(M')$. We may ask whether taking this finite cover is necessary.
\begin{question}
For any mixed $3$-manifold $M$, whether there exists a $\pi_1$-injective (properly) immersed subsurface $\Sigma\looparrowright M$, such that $\pi_1(\Sigma)$ is not contained in any finite index subgroup of $\pi_1(M)$?
\end{question}

\vspace{3mm}

2. None of the results in this paper cover the case of compact (arithmetic) hyperbolic $4$-manifolds, since they neither contain $M_1\cup_{\gamma} M_2$ as a singular submanifold, nor contain a $\mathbb{Z}^2$ subgroup (or a mixed $3$-manifold group as its subgroup).

One possible approach for compact (arithmetic) hyperbolic $4$-manifolds is to study the group of $M_1\cup_{S}M_2$ with $M_1$ and $M_2$ being compact arithmetic hyperbolic $3$-manifolds, with $S$ being a hyperbolic surface embedded in both $M_1$ and $M_2$. In this case, the edge group is a closed surface group, which is much more complicated than $\mathbb{Z}$ or $\mathbb{Z}^2$. The method in this paper seems do not work directly in this case. Even if it works (under some clever modification), the nonseparable (finitely generated) subgroup constructed by this method would be infinitely presented.

This question is actually solved in the author's more recent work \cite{Sun}, which is built on the constructions in this paper.

\vspace{3mm}

3. Given the nonLERFness results of high dimensional (arithmetic) hyperbolic manifolds in this paper, maybe it is not too ambitious to ask the following question about general high dimensional hyperbolic manifolds.

\begin{question}
Whether all finite volume hyperbolic manifolds with dimension at least $4$ have nonLERF fundamental groups?
\end{question}

The main difficulty is that we do not have many examples of finite volume high dimensional hyperbolic manifolds. To the best of the authors knowledge, the main methods for constructing high dimensional hyperbolic manifolds (with dimension $\geq 4$) are: the arithmetic method, the interbreeding and inbreeding method and the reflection group method. In this paper, it is shown in Theorem \ref{arithmetic} and Theorem \ref{noncompact}, Theorem \ref{cutpaste}, and Theorem \ref{reflection} that these three constructions give nonLERF fundamental groups in dimension $\geq 5$ (not $7$-dimensional sporadic examples), $\geq 6$ and $\geq 5$ respectively. Besides these methods, there are other constructions of high dimensional hyperbolic manifolds that invoke some specific right-angled hyperbolic polytopes, e.g. \cite{Da}, \cite{RT}, \cite{ERT1}, \cite{ERT2}, \cite{KoM}. The hyperbolic manifolds obtained by these constructions also contain many totally geodesic $3$-dimensional submanifolds. If the dimension is at least $5$, Theorem \ref{arithmetic} implies that these manifolds have nonLERF fundamental groups, and \cite{Sun} confirms the nonLERFness when the dimension equals $4$.

However, it is difficult to understand a general high dimensional hyperbolic manifold, if we do not assume it lies in one of the above families. The author does not know whether a general high dimensional hyperbolic manifold group contains $3$-manifold subgroups. Maybe a generalization of \cite{KaM} (which shows that each closed hyperbolic $3$-manifold admits a $\pi_1$-injective immersed almost totally geodesic closed subsurface) can do this job, but it seems to be very difficult.

\vspace{3mm}

4. The author expects the method in this paper can be used to prove more groups are not LERF. However, since the author does not have very broad knowledge in group theory, we only consider groups of finite volume hyperbolic manifolds in this paper, which is one of the author's favorite family of groups.

The author also expects the method in this paper can be translated to a purely algebraic proof, instead of a geometric one. Actually, most part of the proof are essentially algebraic, except for one point. In Proposition \ref{surface} and Proposition \ref{nonseparable} (also Proposition \ref{singularsurface} and Proposition \ref{singularnonseparable}), although the essential part that gives nonseparability is $\Sigma_{1,1}\cup \Sigma_{2,1}$, we still need to take a bigger (singular) surface so that it defines a nontrivial $1$-dimensional cohomology class in some finite cover. Then we take a finite cyclic cover dual to this cohomology class and get a contradiction. Although this process seems can be done algebraically, the author does not know how to work it out.

\end{document}